\newcommand{\wt}{{\rm wt}}
\newcommand{\Ker}{{\rm Ker}}
\newcommand{\GF}{{\rm GF}}
\newcommand{\Z}{{\mathbb Z}}
\newtheorem{thm}{Theorem}[section]
\newtheorem{lemma}[thm]{Lemma}
\newtheorem{cor}[thm]{Corollary}
\newtheorem{prop}[thm]{Proposition}
\newtheorem{defn}[thm]{Definition}
\newtheorem{example}[thm]{Example}
\newtheorem{question}[thm]{Question}
\long\def\symbolfootnote[#1]#2{\begingroup%
\def\thefootnote{\fnsymbol{footnote}}\footnote[#1]{#2}\endgroup}
\definecolor{mygreen}{rgb}{0.25,0.6,0.25}
\begin{document}

\title{Constructions of difference sets in nonabelian 2-groups}
\author{
Taylor Applebaum\footnote{University of Richmond VA, now at Google, {\tt applebaum.taylor@gmail.com}}, \quad
John Clikeman\footnote{University of Richmond VA, now at Google, {\tt jclikeman@gmail.com}}, \quad
James A. Davis\footnote{University of Richmond VA, {\tt jdavis@richmond.edu}; supported by NSA grant H98230-12-1-0243}, \\
John F. Dillon\footnote{National Security Agency, Ft.\ George G.\ Meade, MD 20755, {\tt jfdillon@gmail.com}}, \quad
Jonathan Jedwab\footnote{Department of Mathematics, Simon Fraser University, 8888 University Drive, Burnaby, BC V5A 1S6, Canada, {\tt jed@sfu.ca}; supported by NSERC}, \quad
Tahseen Rabbani\footnote{Department of Computer Science, University of Maryland MD, {\tt trabbani@cs.umd.edu}}, \\
Ken Smith\footnote{Department of Mathematics and Statistics, Sam Houston State University, Huntsville TX, {\tt kenwsmith54@gmail.com}}, \quad
William Yolland\footnote{Simon Fraser University, now at MetaOptima Technology Inc., {\tt william@metaoptima.com}}
}

\date{March 27, 2020 (revised January 12, 2022)}

\maketitle
\vspace{-10mm}
\begin{center}
\emph{Dedicated to the memory of Robert A. Liebler, a friend and mentor, and a passionate advocate \\for studying the action of finite nonabelian groups on combinatorial designs.}
\end{center}

\begin{abstract}
Difference sets have been studied for more than 80 years. 
Techniques from algebraic number theory, group theory, finite geometry, and digital communications engineering have been used to establish constructive and nonexistence results. 
We provide a new theoretical approach which dramatically expands the class of $2$-groups known to contain a difference set, by refining the concept of covering extended building sets introduced by Davis and Jedwab in 1997.
We then describe how product constructions and other methods can be used to construct difference sets in some of the remaining $2$-groups. 
We announce the completion of ten years of collaborative work to determine precisely which of the 56,092 nonisomorphic groups of order 256 contain a difference set. 
All groups of order 256 not excluded by the two classical nonexistence criteria are found to contain a difference set, in agreement with previous findings for groups of order 4, 16, and 64.
We provide suggestions for how the existence question for difference sets in 2-groups of all orders might be resolved.
\end{abstract}

\symbolfootnote[0]{2010 Mathematics Subject Classification 05B10, 05E18 (primary)}

%%%%%%%%%%%%%%%%%%%%%%%%%%%%%%%%%%%%%%%%%%%%%%%%%%
\section{Motivation and Overview}
%%%%%%%%%%%%%%%%%%%%%%%%%%%%%%%%%%%%%%%%%%%%%%%%%%
\label{sec:intro}

Difference sets were introduced by Singer~\cite{singer} in 1938 as regular automorphism groups of projective geometries. These examples are contained in the multiplicative group of a finite field, and hence the difference sets in those geometric settings occur in cyclic groups. In the decades following, difference sets were discovered in other abelian groups and subsequently in nonabelian groups. 
The central objective is to determine which groups contain at least one difference set. 
Researchers have developed a range of techniques in pursuit of this objective, taking advantage of connections with design theory, coding theory, cryptography, sequence design, and digital communications.

A $k$-subset $D$ of a group $G$ of order $v$ is a {\it difference set} with parameters $(v, k, \lambda)$ if, 
for all nonidentity elements $g$ in $G$, the equation 
$$
xy^{-1} = g 
$$
has exactly $\lambda$ solutions $(x,y)$ with $x, y \in D$; the related parameter $n$ is defined to be $k-\lambda$.  
The complement of a difference set with parameters $(v,k,\lambda)$ is itself a difference set, with parameters $(v, v-k, v-2k+\lambda)$ and the same related parameter~$n$.
The difference set is nontrivial if $1 < k < v-1$.
A $(v,k,\lambda)$ difference set in $G$ is equivalent to a symmetric $(v,k,\lambda)$ design with a regular automorphism group~$G$~\cite{bjl}.

Given an element $A=\sum_{g \in G} a_g g$ in the group ring $\Z G$, where each $a_g \in \Z$, 
we write $A^{(-1)}$ for the element $\sum_{g \in G} a_g g^{-1}$. 
It is customary in the study of difference sets to abuse notation by identifying a subset $D$ of a group $G$ with the element of the group ring $\Z G$ which is its $\{0,1\}$-valued characteristic function.
The subset $D$ of $G$ is then a difference set if and only if the $\{0,1\}$-valued characteristic function $D$ satisfies the equation
$$
D D^{(-1)} = n + \lambda G \quad \mbox{in $\Z G$},
$$
in which $n$ represents $n 1_G$.
Throughout, we shall instead identify the subset $D$ of $G$ with the element of $\Z G$ which is its $\{\pm 1\}$-valued characteristic function (taking the value $-1$ for each element of $G$ in $D$, and $+1$ for each element of $G$ not in~$D$). Under this convention, the subset $D$ of $G$ is a difference set if and only if the $\{\pm 1\}$-valued function $D$ satisfies
$$
D D^{(-1)} = 4n + (v - 4n)G \quad \mbox{in $\Z G$}.
$$
When $v=4n$, this reduces to
\begin{equation} \label{eqn:DDstar}
D D^{(-1)} = |G|,
\end{equation}
in which case the subset $D$ is called a {\it Hadamard} difference set because the $\{\pm 1\}$-valued $v \times v$ incidence matrix, whose rows and columns are indexed by the elements of $G$ and whose $(g,h)$ entry is the coefficient of $g^{-1}h$ in $D$, is a Hadamard matrix.  

\begin{example}[Bruck 1955 \cite{rhb1}]
\label{ex:bruck}
Let $G = C_2^4 = \langle x_1, x_2, x_3, x_4\rangle$, where $C_2$ denotes the multiplicative cyclic group of order~$2$. The set 
\[
D = \{1,\, x_1,\, x_2,\, x_3,\, x_4,\, x_1x_2x_3x_4\}
\]  
is a $(16,6,2)$ Hadamard difference set in~$G$.
We identify this set with the element 
$D = -1 - x_1 - x_2 - x_3 - x_4 - x_1x_2x_3x_4 
 + x_1 x_2 + x_1 x_3 + x_1 x_4 + x_2 x_3 + x_2 x_4 + x_3 x_4 + x_1 x_2 x_3 + x_1 x_2 x_4 + x_1 x_3 x_4 + x_2 x_3 x_4$ of the group ring~$\Z G$, and then $D D^{(-1)} = 16$.
\end{example}

We call a group containing a Hadamard difference set a \emph{Hadamard group}, and denote the class of Hadamard groups by~${\cal H}$. 
It is an outstanding problem in combinatorics to determine which groups belong to the class ${\cal H}$; see \cite{davisjedwabsurvey} for a survey and \cite{jungnickel-survey-update2} for a summary of subsequent results.
This paper focusses on determining which 2-groups (namely groups whose order is a power of~2) belong to~${\cal H}$.
The relation $v = 4n$ between the parameters of a difference set forces the parameters to be
\begin{equation} \label{eqn:had_params}
(v, k, \lambda) = (4N^2, 2N^2-N, N^2-N)
\end{equation}
for some integer $N$ \cite{menon2}.
Here $N$ can be positive or negative, and the two values $\pm N$ give the parameters of complementary difference sets and designs.
A nontrivial difference set in a 2-group must also have parameters of the form~\eqref{eqn:had_params}, where $N = 2^d$ for some positive integer~$d$ \cite{mann}. We therefore restrict attention to the parameters 
\[
(v, k, \lambda) = (2^{2d+2},2^{2d+1}-2^d,2^{2d}-2^d),
\]
where $d$ is a nonnegative integer. 
The groups of order $2^{2d+2}$ form a rich source of potential Hadamard difference sets:
there are 2 nonisomorphic groups of order 4 (both of which contain a trivial Hadamard difference set); 14 of order 16; 267 of order 64; 56,092 of order~256; and 49,487,367,289 groups of order 1024~\cite{besche-eick-obrien,burrell,oeis-order2n}.

The following product construction contains, as a special case, the earlier result \cite{menon2, turyn} that the class ${\cal H}$ is closed under direct products.

\begin{thm}[Dillon product construction 1985 \cite{dillon}]
\label{thm:productconstruction}
Suppose that $H_1, H_2 \in {\cal H}$, and that $G$ is a group containing subgroups $H_1$ and $H_2$ satisfying $G=H_1H_2$ and $H_1 \cap H_2=1$.
Then $G \in {\cal H}$. 
\end{thm}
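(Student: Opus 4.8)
The plan is to construct a $\{\pm1\}$-valued Hadamard difference set in $G$ directly as the group-ring product $D = D_1 D_2$, where $D_i$ is a Hadamard difference set in $H_i$; the $\{\pm1\}$ convention is exactly what makes this clean, since the defining condition is then just the multiplicative identity~\eqref{eqn:DDstar}. Because $H_1, H_2 \in {\cal H}$, I would pick $\{\pm1\}$-valued $D_1 \in \Z H_1$ and $D_2 \in \Z H_2$ with $D_1 D_1^{(-1)} = |H_1|$ in $\Z H_1$ and $D_2 D_2^{(-1)} = |H_2|$ in $\Z H_2$. Viewing $\Z H_1$ and $\Z H_2$ as the evident subrings of $\Z G$, set $D := D_1 D_2$, with the product taken in $\Z G$.

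The first step is to verify that $D$ really is a $\{\pm1\}$-valued characteristic function on $G$, and not merely an integer-valued element of $\Z G$. Writing $D = \sum_{h_1 \in H_1} \sum_{h_2 \in H_2} D_1(h_1) D_2(h_2)\, h_1 h_2$, this comes down to the assertion that the multiplication map $H_1 \times H_2 \to G$, $(h_1, h_2) \mapsto h_1 h_2$, is a bijection --- surjective because $G = H_1 H_2$, and injective because $h_1 h_2 = h_1' h_2'$ forces $(h_1')^{-1} h_1 = h_2' h_2^{-1} \in H_1 \cap H_2 = 1$. Granting this, each $g \in G$ has a unique expression $g = h_1 h_2$, so its coefficient in $D$ is the single value $D_1(h_1) D_2(h_2) \in \{\pm1\}$; in particular $|G| = |H_1|\,|H_2|$.

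The second step is the computation of $D D^{(-1)}$, which I expect to be immediate. Using that $A \mapsto A^{(-1)}$ is an anti-automorphism of $\Z G$,
\[
D D^{(-1)} = D_1 D_2 D_2^{(-1)} D_1^{(-1)} = D_1 \bigl(|H_2|\,1_G\bigr) D_1^{(-1)} = |H_2|\, D_1 D_1^{(-1)} = |H_1|\,|H_2|\,1_G = |G|,
\]
where $D_2 D_2^{(-1)} = |H_2|\,1_G$ persists under the inclusion $\Z H_2 \hookrightarrow \Z G$, the central scalar $|H_2|\,1_G$ commutes through $D_1^{(-1)}$, and the analogous identity is then applied to $D_1$. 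By~\eqref{eqn:DDstar}, $D$ is a Hadamard difference set in $G$, so $G \in {\cal H}$.

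I do not expect a serious obstacle: the only point needing genuine care is the first step --- confirming that the bijection $H_1 \times H_2 \to G$ turns the group-ring product $D_1 D_2$ into an honest $\{\pm1\}$-valued function --- and it is worth stressing that nothing in the argument requires $H_1$ or $H_2$ to be normal in $G$, precisely because the factor $D_2 D_2^{(-1)}$ collapses to a central scalar and slides freely through the rest of the product. With the $\{0,1\}$ convention the same idea works but demands an extra inclusion--exclusion step to recover the characteristic function, which is exactly the bookkeeping the $\{\pm1\}$ normalization is designed to sidestep.
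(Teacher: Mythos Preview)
Your argument is correct and matches the paper's proof essentially line for line: both define $D = D_1 D_2$, use the factorization $G = H_1 H_2$ with $H_1 \cap H_2 = 1$ to conclude $D$ is $\{\pm1\}$-valued, and then compute $DD^{(-1)} = D_1 D_2 D_2^{(-1)} D_1^{(-1)} = D_1 |H_2| D_1^{(-1)} = |H_1||H_2| = |G|$. Your version simply spells out more of the details (the bijection, the anti-automorphism, centrality of the scalar) that the paper leaves implicit.
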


\begin{proof}
Let $D_1$ and $D_2$ be difference sets in $H_1$ and $H_2$, respectively, and let $D=D_1D_2$.  
By hypothesis, every element $g$ of $G$ has a unique representation $g=h_1h_2$ for some $h_1 \in H_1$ and $h_2 \in H_2$, and so $D$ is $\{\pm 1\}$-valued. 
Then 
\[
DD^{(-1)} = (D_1D_2)(D_1D_2)^{(-1)} =  D_1D_2 D_2^{(-1)}D_1^{(-1)} =  D_1|H_2|D_1^{(-1)} =  |H_1||H_2| = |G|.
\]
\end{proof}
 
In a seminal paper, Turyn used algebraic number theory to prove a first nonexistence result for Hadamard $2$-groups.
 
\begin{thm}[Turyn 1965 \cite{turyn}]
\label{thm:turyn}
Let $G$ be a group of order $2^{2d+2}$ containing a normal subgroup $K$ of order less than $2^d$ such that $G/K$ is cyclic. Then $G \not \in {\cal H}$.
\end{thm}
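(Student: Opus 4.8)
The plan is to use a character-theoretic argument combined with an algebraic number theory constraint. First I would suppose, for contradiction, that $D$ is a Hadamard difference set in $G$, so that $DD^{(-1)} = |G| = 2^{2d+2}$ in $\Z G$ by \eqref{eqn:DDstar}. The key reduction is to push $D$ down to the quotient $G/K$: writing $\rho: \Z G \to \Z[G/K]$ for the natural ring homomorphism induced by the quotient map, the image $\bar D = \rho(D)$ satisfies $\bar D\,\bar D^{(-1)} = |K|\cdot |G/K|$ in $\Z[G/K]$, since $\rho(G) = |K|\cdot(G/K)$ and $\rho$ is a homomorphism. Because $G/K$ is cyclic of order $2^{2d+2}/|K| \ge 2^{d+2} > 4$ (using $|K| < 2^d$), I can evaluate this equation under a faithful character $\chi$ of the cyclic group $G/K$: this character takes values in $\Z[\zeta]$ where $\zeta$ is a primitive $2^m$-th root of unity for some $m \ge 3$, and yields $\chi(\bar D)\overline{\chi(\bar D)} = |K|$, where the bar now denotes complex conjugation. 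Note that $\bar D$ has all coefficients in $\{\pm 1, \pm 3, \ldots\}$ of absolute value at most $|K|$; more precisely, since $D$ is $\{\pm1\}$-valued each coefficient of $\bar D$ is an integer congruent to $|K| \pmod 2$ with absolute value at most $|K|$, and $\chi(\bar D) \equiv \chi(G/K\text{-sum of all coeffs}) $ has a controlled residue.

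The heart of the argument is then the algebraic number theory: in the ring $\Z[\zeta]$ with $\zeta$ of 2-power order $2^m$, $m \ge 3$, the rational prime $2$ is totally ramified, $(2) = (1-\zeta)^{\phi(2^m)}$, and $1 - \zeta$ generates the unique prime above $2$. From $\chi(\bar D)\overline{\chi(\bar D)} = |K| = 2^s$ with $s < d$, the ideal $(\chi(\bar D))$ must be a power of the prime $(1-\zeta)$; combined with the fact that complex conjugation fixes this prime ideal, one deduces that $\chi(\bar D) = u(1-\zeta)^s$ for some unit $u$ (up to the real subfield subtlety, which is handled because $(1-\zeta)^{\phi(2^m)/2}$ generates a principal ideal fixed by conjugation and $2$ is essentially $(1-\zeta)^{2}$ times a unit in the relevant sense). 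The contradiction comes from a congruence/size argument: reducing $\chi(\bar D)$ modulo a suitable power of $(1-\zeta)$ and using that the exponent $s$ is strictly less than $d$ (equivalently $|K| < 2^d$) forces $\chi(\bar D)$ to lie in too small an ideal to be compatible with $\bar D$ having the prescribed coefficient structure — in particular, reducing the original equation $DD^{(-1)} = 2^{2d+2}$ mod $2$ shows $D$ reduces to the all-ones (or a coset-like) structure, and the resulting divisibility of $\chi(\bar D)$ by $(1-\zeta)$ to exponent at least $d+1$ (coming from $|G|$) clashes with the factorization $|K| = 2^s$, $s \le d-1$, forcing $\chi(\bar D) = 0$, which is impossible for a faithful character of a $\{\pm1\}$-image that is not identically the group.

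I expect the main obstacle to be pinning down the exact exponent bookkeeping for the prime $(1-\zeta)$ and correctly handling the unit ambiguity and the real-subfield issue (the fact that $\Z[\zeta]$ has nontrivial units and that one must argue the relevant ideal is principal and conjugation-stable). A clean way to sidestep some of this is the standard trick of reducing mod the prime above $2$ directly: one shows $\chi(\bar D) \equiv \chi(\bar D)^{\sigma} \pmod{(1-\zeta)}$ for all Galois automorphisms $\sigma$, hence $\chi(\bar D) \bmod (1-\zeta)$ lies in $\Z/2\Z$ and equals the mod-2 reduction of the integer $\sum_g d_g$ (a fixed parity), while the norm equation forces a strictly higher power of $(1-\zeta)$ to divide $\chi(\bar D)$ than the parity of $|K|$ permits — this is exactly where $|K| < 2^d$ enters, and where the cyclicity of $G/K$ is essential to guarantee a faithful character of large enough 2-power order.
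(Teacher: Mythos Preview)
The paper does not supply a proof of this theorem; it is quoted as Turyn's classical result and used thereafter as a nonexistence criterion. Your overall strategy---pass to the cyclic quotient $G/K$, evaluate a faithful character, and exploit the total ramification of $2$ in $\Z[\zeta]$---is the right one, but there is a concrete error and the endgame is missing.

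The error is in the character value: from $DD^{(-1)} = |G|$ in $\Z G$ (a scalar, in the paper's $\{\pm1\}$ convention) one obtains $\bar D\,\bar D^{(-1)} = |G|$ in $\Z[G/K]$ and hence $|\chi(\bar D)|^2 = |G| = 2^{2d+2}$ for every character $\chi$ of $G/K$, not $|K|$. With the correct value, the ramification argument (together with the fact that a unit of absolute value $1$ in a cyclotomic field is a root of unity) yields $\chi(\bar D) = \pm\zeta^{j}\,2^{d+1}$ for some~$j$. The contradiction is then a coefficient-size argument, not your proposed ``$\chi(\bar D)=0$'': writing $G/K=\langle g\rangle$ of order $2^m$ and $\bar D=\sum_i a_i g^i$, and reading $\sum_i a_i\zeta^i = \pm\zeta^j 2^{d+1}$ on the integral basis $\{1,\zeta,\ldots,\zeta^{2^{m-1}-1}\}$ (using $\zeta^{2^{m-1}}=-1$), one finds $a_j - a_{j+2^{m-1}} = \pm 2^{d+1}$. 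But each $a_i$ is a sum of $|K|$ entries from $\{\pm1\}$, so $|a_i|\le |K|<2^d$ and $|a_j - a_{j+2^{m-1}}| \le 2|K| < 2^{d+1}$. This is exactly where the hypothesis $|K|<2^d$ enters; your sketch never isolates this bound, and the vague divisibility clash you describe does not by itself produce a contradiction.
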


\begin{cor}[Turyn exponent bound]\label{cor:tb}
Suppose $G \in {\cal H}$ is an abelian group of order $2^{2d+2}$. Then $G$ has exponent at most $2^{d+2}$.
\end{cor}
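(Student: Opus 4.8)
The plan is to prove the contrapositive: if $G$ is an abelian group of order $2^{2d+2}$ whose exponent exceeds $2^{d+2}$, then $G \notin {\cal H}$. This should follow immediately from \cref{thm:turyn} once the right normal subgroup is exhibited.

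First I would apply the structure theorem for finite abelian groups to write $G \cong C_{2^e} \times A$, where $2^e = \exp(G)$ and $A$ is an abelian group with $|A| = 2^{2d+2-e}$. Since by assumption $2^e > 2^{d+2}$, we have $e \ge d+3$, and hence $|A| = 2^{2d+2-e} \le 2^{d-1} < 2^d$. Taking $K$ to be the subgroup of $G$ corresponding to the direct factor $A$, we see that $K$ is normal (as $G$ is abelian), that $|K| < 2^d$, and that $G/K \cong C_{2^e}$ is cyclic. Thus $G$ satisfies the hypotheses of \cref{thm:turyn}, so $G \notin {\cal H}$, which establishes the contrapositive.

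There is no genuine obstacle here: the content is entirely in \cref{thm:turyn}, and the corollary is a matter of choosing $K$ correctly and verifying one order inequality. The only subtlety worth flagging is that the threshold $2^{d+2}$ in the statement is exactly what this argument can reach, since an exponent of precisely $2^{d+2}$ would force $|K| = 2^d$, violating the \emph{strict} inequality $|K| < 2^d$ demanded by \cref{thm:turyn}.
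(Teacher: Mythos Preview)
Your proof is correct and is exactly the intended derivation: the paper states the corollary immediately after \cref{thm:turyn} without proof, treating it as the obvious consequence obtained by splitting off a maximal cyclic direct factor and applying the theorem to the complementary factor~$K$. Your observation about the strict inequality and the sharpness of the bound $2^{d+2}$ is also on point.
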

\noindent
Dillon later proved a second nonexistence result for Hadamard $2$-groups.

\begin{thm}[Dillon 1985 \cite{dillon}]
\label{thm:dillon}
Let $G$ be a group of order $2^{2d+2}$ containing a normal subgroup $K$ of order less than $2^d$ such that $G/K$ is dihedral. Then $G \not \in {\cal H}$.
\end{thm}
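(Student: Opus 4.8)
I would follow the strategy of Turyn's proof (\Cref{thm:turyn}), but use the cyclic subgroup of index~$2$ inside the dihedral quotient in place of a cyclic quotient. Suppose for contradiction that $D$ is a Hadamard difference set in $G$, so $DD^{(-1)}=|G|$ in $\Z G$, and let $\bar D \in \Z(G/K)$ be the image of $D$ under the ring homomorphism $\Z G \to \Z(G/K)$ induced by the normal subgroup $K$; then $\bar D\bar D^{(-1)}=|G|$. Writing $D=G-2D_0$ for the $\{0,1\}$-valued characteristic function $D_0$, each coefficient of $\bar D$ has the form $|K|-2c$ with $0\le c\le|K|$, and so is an integer of absolute value at most $|K|<2^d$. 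Write $G/K=\langle r\rangle \rtimes \langle t\rangle$ with $r^m=t^2=1$ and $trt^{-1}=r^{-1}$; since $|K|$ is a power of $2$ smaller than $2^d$, the integer $m=|G|/(2|K|)=2^{2d+1}/|K|$ is a power of $2$ with $m\ge 2^{d+2}$.

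Next I would transfer $\bar D\bar D^{(-1)}=|G|$ into the commutative ring $R:=\Z\langle r\rangle$. Write $\bar D = A + Bt$ with $A$ the part of $\bar D$ supported on $\langle r\rangle$ and $B\in R$ determined by $Bt=\bar D-A$. Using $(r^it)^{-1}=r^it$ and $tr^i=r^{-i}t$ one finds $\bar D^{(-1)}=A^{(-1)}+Bt$, and hence
\[
\bar D\bar D^{(-1)} \;=\; \bigl(AA^{(-1)}+BB^{(-1)}\bigr)\;+\;2AB\,t .
\]
Matching the $\langle r\rangle$-part and the $\langle r\rangle t$-part against $|G|=|G|\cdot 1$ gives, in $R$,
\[
AB=0 \qquad\text{and}\qquad AA^{(-1)}+BB^{(-1)}=|G|=2^{2d+2}.
\]
Now take a faithful character $\chi$ of $\langle r\rangle$, so $\chi(r)=\zeta_m$ is a primitive $m$-th root of unity. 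Applying $\chi$ yields $\chi(A)\chi(B)=0$ and $|\chi(A)|^2+|\chi(B)|^2=2^{2d+2}$, so one of $\chi(A),\chi(B)$ vanishes while the other---say $\chi(A)$, the two cases being symmetric---satisfies $\chi(A)\overline{\chi(A)}=2^{2d+2}$ in $\Z[\zeta_m]$.

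The contradiction is then extracted from this single cyclotomic equation as in Turyn's argument. Since $m=2^s$, the prime $2$ is totally ramified in $\Z[\zeta_m]$: there is a unique prime ideal $\mathfrak p=(1-\zeta_m)$ above $2$, with $(2)=\mathfrak p^{2^{s-1}}$, and $\mathfrak p$ is fixed by complex conjugation. Hence $v_{\mathfrak p}(\chi(A))=v_{\mathfrak p}(\overline{\chi(A)})$; since their sum is $v_{\mathfrak p}(2^{2d+2})=(2d+2)2^{s-1}$, each equals $(d+1)2^{s-1}$, i.e.\ $2^{d+1}$ divides $\chi(A)$ in $\Z[\zeta_m]$. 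On the other hand, reducing $\chi(A)=\sum_{i=0}^{m-1}a_i\zeta_m^i$ modulo $\zeta_m^{m/2}=-1$ expresses $\chi(A)$ in the integral basis $1,\zeta_m,\dots,\zeta_m^{m/2-1}$ with coefficients $a_i-a_{i+m/2}$, each of absolute value at most $2|K|<2^{d+1}$. Divisibility of all these coefficients by $2^{d+1}$ then forces them all to be $0$, so $\chi(A)=0$, contradicting $|\chi(A)|^2=2^{2d+2}\neq 0$.

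The one step I expect to require genuine care is the derivation of $AB=0$ in the second paragraph: this relation, which has no counterpart in the cyclic situation of \Cref{thm:turyn}, is exactly what makes the faithful character place the entire weight $2^{2d+2}$ on a single one of the two halves $A,B$ of $\bar D$, so that Turyn's valuation-versus-size squeeze applies. The group-ring bookkeeping in the noncommutative ring $\Z(G/K)$ (the identities $(r^it)^{-1}=r^it$ and $tr^i=r^{-i}t$), the remark that $m$ is a $2$-power at least $2^{d+2}$, and the cyclotomic endgame are all routine.
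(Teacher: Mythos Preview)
The paper does not itself supply a proof of this theorem; it is quoted from Dillon~\cite{dillon}. The only information the paper gives about the original argument is in \cref{subsec:transfer}: Dillon's proof ``transfers a putative difference set in a group with a large dihedral quotient to a difference set in a group with a large cyclic quotient in order to apply the nonexistence result of \cref{thm:turyn}.'' So the route sketched there is: build, from $D$, an actual difference set in some companion group whose relevant quotient is cyclic, and then invoke Turyn's theorem as a black box.

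Your argument is correct and takes a genuinely different, more self-contained route. Instead of transferring to a cyclic-quotient group, you stay in the dihedral quotient, split $\bar D = A + Bt$ along the index-$2$ cyclic subgroup, and from $\bar D\bar D^{(-1)}=|G|$ read off in the commutative ring $\Z\langle r\rangle$ both $AA^{(-1)}+BB^{(-1)}=|G|$ and the crucial extra relation $AB=0$ (which, as you note, has no analogue in the purely cyclic case). A faithful character of $\langle r\rangle$ then forces one of $\chi(A),\chi(B)$ to vanish and the other to satisfy $|\cdot|^2=2^{2d+2}$, after which you replay Turyn's ramification-versus-coefficient-size squeeze inside $\Z[\zeta_m]$. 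The advantage of your approach is that it is direct and exposes exactly where the dihedral structure enters (the identity $(r^it)^{-1}=r^it$ giving $\bar D^{(-1)}=A^{(-1)}+Bt$ and hence $AB=0$). The advantage of Dillon's transfer approach, as the paper presents it, is conceptual: it exemplifies the general ``transfer'' philosophy of \cref{subsec:transfer} and reduces cleanly to \cref{thm:turyn} without reopening the cyclotomic machinery.
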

\noindent
In the ensuing 35 years since the publication of \cite{dillon}, no further nonexistence results for Hadamard $2$-groups have been found.
In this paper we shall present constructive results that identify new Hadamard $2$-groups. In preparation, we introduce some further conventions that will be used throughout.

Let
$$
E_r := C_2^r = \langle x_1, x_2, \dots, x_r \rangle
$$
be the elementary abelian group of order $2^r$.
The group $E_r$ is isomorphic to the additive group of the vector space $U_r := \GF(2)^r$
comprising all binary $r$-tuples $a = (a_1, a_2, \dots, a_r)$, and an explicit isomorphism is given by
$$
a = (a_1, a_2, \dots,  a_r) \mapsto x^a = x_1^{a_1}x_2^{a_2} \cdots x_r^{a_r}. 
$$
\noindent
The \emph{characters} of $E_r$ are the homomorphisms from $E_r$ into the multiplicative group $\{1, -1\}$
given by
$$
\chi_u : x^a \mapsto (-1)^{u \cdot a} \quad \mbox{for all $a \in U_r$} 
$$
as $u$ ranges over~$U_r$.

We consider integer-valued functions on $G$ to be interchangeable with elements of~$\Z G$: 
we identify an integer-valued function $F$ on $G$ with the element $\sum_{g \in G} F(g) g$ of the group ring $\Z G$, and conversely we identify a group ring element $\sum_{g \in G} F_g g$ with the function $F$ on $G$ given by $F(g) = F_g$.
The character $\chi_u$ of $E_r$ may then be written in the group ring $\Z E_r$ as
\begin{align}
\chi_u     &= \sum_{a \in U_r} \chi_u(x^a) x^a \nonumber \\ 
           &= \sum_{a \in U_r} (-1)^{u \cdot a} x^a \nonumber \\
           &= \sum_{a \in U_r} \prod_{i=1}^r (-1)^{u_i a_i}x_i^{a_i} \nonumber \\
           &= \prod_{i=1}^r \sum_{a_i = 0}^1 (-1)^{u_i a_i}x_i^{a_i} \nonumber \\ 
           &= \prod_{i=1}^r \big (1 + (-1)^{u_i}x_i \big). \label{eqn:chiu}
\end{align}
This is consistent with the common notation $\chi_0$ for the principal character, which takes the value 1 at every group element; we identify this function in $\Z E_r$ with the group ring element $\sum_{e \in E_r} e$, or simply~$E_r$.  
For each nonzero $u \in U_r$, the complement of the subset of $E_r$ associated with the $\{\pm 1\}$-valued function $\chi_u$ is a subgroup of~$E_r$ of index~$2$, and as $u$ ranges over the nonzero values of~$U_r$ we obtain all $2^r-1$ subgroups of~$E_r$ of index~$2$ in this way.

\begin{example}
\label{ex:r=2}
Let $E_2 = C_2^2 = \langle x, y \rangle$.
The four characters of $E_2$ are the functions $\chi_u$ as $u$ ranges over $U_2$ = $\{(0, 0), (0, 1),  (1, 0), (1, 1)\}$. Expressed in the group ring $\Z E_2$, these functions are
\begin{align*}
\chi_{00} &= 1 + x + y + xy   = (1 + x)(1 + y), \\
\chi_{01} &= 1 + x - y - xy   = (1 + x)(1 - y), \\
\chi_{10} &= 1 - x + y - xy   = (1 - x)(1 + y), \\
\chi_{11} &= 1 - x - y + xy   = (1 - x)(1 - y),
\end{align*}
(where we abbreviate $\chi_{(0,1)}$, for example, as $\chi_{01}$).

The subgroups of $E_2$ corresponding to $\chi_{01}$, $\chi_{10}$, $\chi_{11}$ are $\{1,x\}$, $\{1,y\}$, $\{1,xy\}$, respectively.
\end{example}
The group ring interpretation of the characters of $E_2$ shown in \cref{ex:r=2} illustrates the following fundamental properties, which underlie our new constructions of difference sets. These properties can all be derived directly from~\eqref{eqn:chiu}, noting that $\chi_v^{(-1)} = \chi_v$ for all $v \in U_r$.

\begin{prop} 
\label{prop:orthogonality}
Let $\{\chi_u: u \in U_r\}$ be the set of characters of~$E_r$.
Then for all $u, v \in U_r$, in the group ring $\Z E_r$ we have:
\begin{enumerate}[$(i)$]

\item  $\chi_u \chi_v^{(-1)} = 
 \begin{cases} 	2^r \chi_u 	& \mbox{if $u=v$}, \\
		0		& \mbox{if $u \ne v$}
 \end{cases}$

\item  $\displaystyle{\sum_{u \in U_r}} \chi_u = 2^r$

\item  $\displaystyle{\sum_{e \in E_r}} \chi_u(e) = 
 \begin{cases} 	2^r 		& \mbox{if $u=0$}, \\
		0		& \mbox{if $u \ne 0$}.
 \end{cases}$

\end{enumerate}
\end{prop}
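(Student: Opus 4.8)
The plan is to derive all three identities directly from the product formula \eqref{eqn:chiu}, namely $\chi_u = \prod_{i=1}^r (1 + (-1)^{u_i} x_i)$, treating the three statements in a convenient order. I would begin with $(i)$, since $(ii)$ and $(iii)$ will follow from it with little extra effort. The key observation for $(i)$ is that the group ring $\Z E_r$ is a tensor product of the single-generator rings $\Z C_2 = \Z[x_i]/(x_i^2 - 1)$, so the product \eqref{eqn:chiu} factors the computation coordinate by coordinate. Since $\chi_v^{(-1)} = \chi_v$ (each generator is an involution), I would write $\chi_u \chi_v^{(-1)} = \prod_{i=1}^r \bigl(1 + (-1)^{u_i} x_i\bigr)\bigl(1 + (-1)^{v_i} x_i\bigr)$ and expand each factor using $x_i^2 = 1$, obtaining $\bigl(1 + (-1)^{u_i} x_i\bigr)\bigl(1 + (-1)^{v_i} x_i\bigr) = \bigl(1 + (-1)^{u_i + v_i}\bigr)\bigl(1 + (-1)^{u_i} x_i\bigr)$. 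The scalar $1 + (-1)^{u_i + v_i}$ equals $2$ when $u_i = v_i$ and $0$ when $u_i \ne v_i$. Taking the product over all $i$, if $u = v$ every factor contributes a $2$ and the remaining factors multiply back to $\chi_u$, giving $2^r \chi_u$; if $u \ne v$ then some coordinate contributes a zero scalar, killing the whole product. This establishes $(i)$.

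For $(iii)$, I would note that $\sum_{e \in E_r} \chi_u(e)$ is, by our function/group-ring identification, exactly the coefficient sum of the group ring element $\chi_u$, i.e.\ the image of $\chi_u$ under the augmentation map $\Z E_r \to \Z$ sending every $x_i \mapsto 1$. Applying this homomorphism to \eqref{eqn:chiu} gives $\prod_{i=1}^r (1 + (-1)^{u_i}) = \prod_{i=1}^r \bigl(1 + (-1)^{u_i}\bigr)$, which is $2^r$ if every $u_i = 0$ (that is, $u = 0$) and $0$ otherwise, since any coordinate with $u_i = 1$ contributes a factor of $0$. Alternatively one can get $(iii)$ from $(i)$ by applying augmentation to $\chi_u \chi_0^{(-1)}$, but the direct route is cleaner.

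For $(ii)$, I would sum \eqref{eqn:chiu} over all $u \in U_r$ and again exploit the tensor-product structure: $\sum_{u \in U_r} \prod_{i=1}^r \bigl(1 + (-1)^{u_i} x_i\bigr) = \prod_{i=1}^r \sum_{u_i = 0}^1 \bigl(1 + (-1)^{u_i} x_i\bigr) = \prod_{i=1}^r \bigl((1 + x_i) + (1 - x_i)\bigr) = \prod_{i=1}^r 2 = 2^r$, where interchanging the sum over the tuple $u$ with the product over coordinates is the standard distributive-law manipulation already used in the derivation of \eqref{eqn:chiu}. This completes all three parts.

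I do not anticipate a genuine obstacle here; the only point requiring a little care is the bookkeeping in $(i)$ — keeping track of which coordinates contribute the scalar $2$ versus the scalar $0$, and verifying that when $u = v$ the surviving product of the factors $\bigl(1 + (-1)^{u_i} x_i\bigr)$ really does reassemble into $\chi_u$ rather than into $\chi_u$ times an unwanted power of $2$. Writing out the rearrangement $\prod_i 2 \bigl(1 + (-1)^{u_i} x_i\bigr) = 2^r \prod_i \bigl(1 + (-1)^{u_i} x_i\bigr) = 2^r \chi_u$ explicitly handles this. Everything else is a routine consequence of $x_i^2 = 1$ and the multiplicativity of \eqref{eqn:chiu}.
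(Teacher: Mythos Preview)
Your proposal is correct and follows precisely the approach the paper indicates: the paper does not spell out a proof but states that all three properties ``can all be derived directly from~\eqref{eqn:chiu}, noting that $\chi_v^{(-1)} = \chi_v$ for all $v \in U_r$,'' and your coordinate-by-coordinate computations from the product formula do exactly this. The factorisation $(1+(-1)^{u_i}x_i)(1+(-1)^{v_i}x_i) = (1+(-1)^{u_i+v_i})(1+(-1)^{u_i}x_i)$ is the right device for~$(i)$, and your use of the augmentation map for~$(iii)$ and the sum--product interchange for~$(ii)$ are clean and correct.
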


\noindent
Since all characters of $E_r$ are $\{\pm 1\}$-valued, \cref{prop:orthogonality}~$(iii)$ implies that every nonprincipal character on $E_r$ takes the values $1$ and $-1$ equally often.

McFarland gave the following difference set construction based on hyperplanes of a vector space, which produces examples in $2$-groups. We prove the construction by interpreting the hyperplanes in terms of characters.

\begin{thm}[McFarland hyperplane construction 1973 \cite{mcfarland}]
\label{thm:mcfarland}
Let $J$ be a group of order $2^{d+1}$. Then $J \times E_{d+1} \in {\cal H}$.
\end{thm}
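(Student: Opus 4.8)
The plan is to write down an explicit Hadamard difference set in $G := J \times E_{d+1}$ directly in the group ring $\Z G$, using the characters of the direct factor $E_{d+1}$ as building blocks; this repackages McFarland's hyperplanes via~\eqref{eqn:chiu}. Set $E := E_{d+1}$ and $r := d+1$, so that $|G| = |J|\,|E| = 2^r \cdot 2^r = 2^{2d+2}$ has the order of a Hadamard $2$-group. Since $|J| = 2^r = |U_r|$, fix a bijection $\phi : J \to U_r$ and put
\[
D := \sum_{j \in J} \chi_{\phi(j)}\, j \ \in\ \Z G,
\]
where $\chi_{\phi(j)} \in \Z E \subseteq \Z G$ is the character of $E$ given by~\eqref{eqn:chiu}.

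First I would check that $D$ is $\{\pm 1\}$-valued. Every element of the direct product $G$ is uniquely of the form $je$ with $j \in J$ and $e \in E$, and expanding $\chi_{\phi(j)}\, j$ shows that the coefficient of $je$ in $D$ is $\chi_{\phi(j)}(e) \in \{1,-1\}$; it is the direct-product structure that lets each coset $jE$ carry its own character. In the $\{0,1\}$-description this $D$ is a union of cosets of index-$2$ subgroups of $E$, one coset lying in each coset $jE$ of $E$ in $G$ (the coset indexed by $\phi(j)=0$ contributing nothing, since $\Ker\chi_0 = E$); and as $j$ ranges over $J$ the value $\phi(j)$ ranges over $U_r$, so the $2^r-1$ hyperplanes $\Ker\chi_u$ of $E$ each appear exactly once. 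This is McFarland's hyperplane difference set.

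Next I would verify the Hadamard condition~\eqref{eqn:DDstar}, that is, $D D^{(-1)} = |G|$. Since $\chi_v^{(-1)} = \chi_v$ and $\Z E$ is central in $\Z G$,
\[
D D^{(-1)} \;=\; \sum_{j,\, j' \in J} \chi_{\phi(j)}\,\chi_{\phi(j')}\; j (j')^{-1}.
\]
By \cref{prop:orthogonality}$(i)$, the product $\chi_{\phi(j)}\chi_{\phi(j')}$ vanishes unless $\phi(j) = \phi(j')$, i.e.\ (by injectivity of $\phi$) unless $j = j'$, in which case it equals $2^r\chi_{\phi(j)}$ and the group-element factor is $j j^{-1} = 1$. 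Hence, using surjectivity of $\phi$ and then \cref{prop:orthogonality}$(ii)$,
\[
D D^{(-1)} \;=\; \sum_{j \in J} 2^r \chi_{\phi(j)} \;=\; 2^r \sum_{u \in U_r}\chi_u \;=\; 2^r \cdot 2^r \;=\; |G|.
\]
Since $v = |G| = 4n$ here, \eqref{eqn:DDstar} shows that $D$ is a Hadamard difference set in $G$, and therefore $G \in {\cal H}$.

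I do not expect a genuine obstacle: the only real idea is to encode McFarland's hyperplanes as the characters $\chi_{\phi(j)}$ indexed by a bijection $\phi$, after which the verification collapses to the orthogonality relations in \cref{prop:orthogonality}. The one point that wants a little care is that $J$ is allowed to be nonabelian, so the argument must avoid invoking characters of $J$ (or of $G$); the computation above uses only the characters of the central factor $E_{d+1}$ and works fibre-by-fibre over $J$, so the nonabelianness of $J$ causes no difficulty.
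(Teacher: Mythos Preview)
Your proof is correct and is essentially the same as the paper's: the paper labels the elements of $J$ as $\{g_u : u \in U_{d+1}\}$ and sets $D = \sum_u g_u \chi_u$, which is exactly your $D = \sum_{j} \chi_{\phi(j)} j$ with $\phi$ the inverse of that labeling, and both arguments reduce immediately to \cref{prop:orthogonality}$(i)$--$(ii)$. Your additional remarks on the $\{0,1\}$-interpretation and the irrelevance of $J$ being nonabelian are accurate but not present in the paper's more terse treatment.
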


\begin{proof} (Dillon \cite{dillon-galway}).
Let $\{\chi_u: u \in U_{d+1}\}$ be the set of characters of $E_{d+1}$.
Label the elements of $J$ arbitrarily as $J = \{ g_u: u \in  U_{d+1}\}$, and let $G = J \times E_{d+1}$.
We see from \cref{prop:orthogonality}~$(i)$ and~$(ii)$ that, in the group ring $\Z G$, the $\{\pm 1\}$-valued function 
\begin{equation}
\label{eqn:Dast}
D =  \sum_{u \in U_{d+1}} g_u \chi_u
\end{equation}
on $G$ satisfies 
\begin{align}
D D^{(-1)} 
& = \sum_{u,v \in U_{d+1}} g_u \chi_u \chi_v^{(-1)} g_v^{-1} \nonumber \\
& = 2^{d+1}\sum_{u \in U_{d+1}} g_u \chi_u  g_u^{-1} \label{eqn:step1} \\
& = 2^{d+1} \sum_{u \in U_{d+1}} \chi_u \label{eqn:step2} \\
& = 2^{d+1} \cdot 2^{d+1} = |G|. \nonumber
\end{align}
Therefore $D$ corresponds to a Hadamard difference set in~$G$.
 \end{proof}

We shall show how the proof of \cref{thm:mcfarland} can be adapted so that the result still holds when $E_{d+1}$ is a normal subgroup of index $2^{d+1}$ of a group $G$, but not necessarily a direct factor. The key consideration is how to obtain \eqref{eqn:step2} from~\eqref{eqn:step1}. 
The following combinatorial result allows us to do so, by showing that there is a choice for coset representatives $g_u$ of $E_{d+1}$ in $G$ satisfying $\{g_u \chi_u g_u^{-1}: u \in U_{d+1}\} = \{\chi_u: u \in U_{d+1}\}$.
Note that a group $H$ \emph{acts as a group of permutations} on a set $S$ if there is a homomorphism~$\phi$ (called the \emph{action} of $H$ on~$S$) from $H$ to the group of permutations of~$S$.

\begin{thm}[Drisko 1998 {\cite[Corollary~5]{drisko}}]
\label{thm:drisko} 
Let $p$ be a prime and let $H$ be a finite $p$-group. 
Suppose that $H$ acts as a group of permutations on a set $S$ of size $|H|$ according to the action~$\phi$, and that $S$ contains an element that is fixed under~$\phi$.
Then there is a bijection $\theta$ from $S$ to $H$ satisfying
\[
\big\{\phi\big(\theta(s)\big)(s) : s \in S\big\} = S.
\]
\end{thm}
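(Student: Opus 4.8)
The plan is to induct on $|H|$, the base case $|H|=1$ being trivial (the unique element of $S$ is fixed, and $\theta$ sends it to the identity). For the inductive step, since $H$ is a nontrivial $p$-group it has a central subgroup $Z$ of order $p$. I would analyze the orbits of $Z$ acting on $S$ via the restriction of $\phi$: each orbit has size $1$ or $p$, and by hypothesis at least one orbit (the one containing the given fixed point $s_0$) is a singleton. Since $|S| = |H| \equiv 0 \pmod p$ and the singleton orbits are exactly the fixed points of $Z$, the number of $Z$-fixed points in $S$ is divisible by $p$; in particular there are at least $p$ of them, so there exists a $Z$-fixed point other than $s_0$ — but more importantly I want to pass to a quotient.

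Let $\bar H = H/Z$, and let $T$ be the set of $Z$-orbits on $S$. Because $Z$ is central, the $H$-action on $S$ descends to an action $\bar\phi$ of $\bar H$ on $T$. One checks that $|T|$ is congruent to $|S| = |H| = p|\bar H|$ modulo $p(p-1)$ is not quite what we need; rather, partitioning $S$ into $a$ singleton $Z$-orbits and $b$ orbits of size $p$ gives $a + pb = p|\bar H|$, so $a \equiv 0 \pmod p$ and $|T| = a + b$. This does not immediately equal $|\bar H|$, so a direct induction on $T$ does not apply. The key step, and the main obstacle, will be to choose the decomposition more carefully: I would instead argue that among the $p \mid a$ singleton orbits one can select a $Z$-invariant subset of $S$ on which things work, or — the cleaner route — prove a slightly stronger statement by induction that accommodates the quotient. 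Concretely, I expect to need the stronger inductive hypothesis that for a $p$-group action on a set $S$ with $|S| \equiv 0 \pmod{|H|}$ having at least $|S|/|H|$ fixed points, there is an injection-like assignment; applying this to $\bar H$ acting on $T$ (after verifying $|T|$ is a multiple of $|\bar H|$ and that the image of $s_0$ is $\bar H$-fixed) yields a map $\bar\theta$ from $T$ to $\bar H$ with $\{\bar\phi(\bar\theta(t))(t)\} = \bar H \cdot$(appropriate set).

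Given such a $\bar\theta$, I would lift it: for each $Z$-orbit $t \in T$ pick a representative and use $\bar\theta(t)$ to move it, then handle the two cases according to whether $t$ is a singleton or a full $Z$-orbit, using the freedom in the coset $Z\bar\theta(t) \subseteq H$ (which has exactly $p$ elements, matching the orbit size) to spread the images of the $p$ points of a size-$p$ orbit across the coset. The singleton orbits, of which there are a multiple of $p$, get grouped into blocks of $p$ and assigned to the remaining cosets of $Z$ in $H$, again using that $Z$ acts on each such coset simply transitively by left multiplication so the $p$ chosen group elements hit $p$ distinct targets. Assembling these pieces produces the desired bijection $\theta: S \to H$ with $\{\phi(\theta(s))(s): s \in S\} = S$. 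The delicate bookkeeping — ensuring the lifted assignments remain a bijection onto $H$ and that the pushed-forward set is all of $S$ rather than merely a subset — is where the real work lies, and I anticipate it is exactly this coset-by-coset reconciliation that forces the strengthened induction hypothesis mentioned above.
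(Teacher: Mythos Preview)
The paper does not give its own proof of this theorem: it is quoted as \cite[Corollary~5]{drisko} and used as a black box in the proof of \cref{cor:drisko} and of \cref{thm:prehadamard}. So there is no in-paper argument to compare your proposal against.

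As a stand-alone attempt, your outline has a genuine gap at exactly the point you flag. With $a$ singleton $Z$-orbits and $b$ orbits of size $p$, you have $a+pb=|H|$, hence $a=pc$ for some $c\ge 1$ (since there is a fixed point), and then $|T|=a+b=pc+b$ while $|\bar H|=|H|/p=c+b$. Thus $|T|>|\bar H|$ whenever $c\ge 1$, so the inductive hypothesis \emph{never} applies directly to $\bar H$ acting on~$T$. Your proposed fix --- a strengthened hypothesis for actions on sets whose size is a multiple of $|H|$ with ``at least $|S|/|H|$ fixed points'' --- is not formulated precisely, and it is not clear that it is preserved under passage to $\bar H$ acting on $T$ (you would need to control both the size of $T$ modulo $|\bar H|$ and the number of $\bar H$-fixed orbits, and the latter is not the same as the number of $Z$-fixed points). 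The subsequent lifting step, distributing the $pc$ singleton orbits in blocks of $p$ across cosets of $Z$ while keeping $\theta$ bijective and the image set equal to $S$, is asserted rather than verified.

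In short: the paper offers nothing to compare with, and your proposal is an outline whose central difficulty --- the mismatch $|T|\ne|\bar H|$ --- remains unresolved. If you want to pursue this route, you should either state and prove the strengthened induction hypothesis in full, or consult Drisko's original argument, which proceeds via transversals in row-Latin rectangles rather than a direct orbit-counting induction.
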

\noindent
The bijection $\theta$ in \cref{thm:drisko} selects an element $\theta(s)$ of the group $H$ for each $s \in S$, so that the resulting set of actions of $\theta(s)$ on $s$ is a permutation of the set~$S$. We now explain how this result can be used to extend \cref{thm:mcfarland} as desired, proving a conjecture due to Dillon~\cite{dillon-MH-conf}.

\begin{cor}[Drisko 1998 {\cite[Corollary~9]{drisko}}]
\label{cor:drisko}
Let $G$ be a group of order $2^{2d+2}$ containing a normal subgroup $E \cong C_2^{d+1}$. Then $G \in {\cal H}$.
\end{cor}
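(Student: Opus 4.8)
The plan is to run the proof of \cref{thm:mcfarland} essentially verbatim, with \cref{thm:drisko} providing the one missing ingredient. In that earlier proof the only use of the direct-product hypothesis is the step from~\eqref{eqn:step1} to~\eqref{eqn:step2}: there the coset representatives $g_u$ lie in the direct factor $J$ and so commute with all of $E_{d+1}$, giving $g_u\chi_u g_u^{-1}=\chi_u$ for free. Without a direct factor we must instead choose the transversal $\{g_u:u\in U_{d+1}\}$ of $E$ in $G$ with some care, so that conjugation merely permutes the characters: $\{g_u\chi_u g_u^{-1}:u\in U_{d+1}\}=\{\chi_u:u\in U_{d+1}\}$. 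Granted such a transversal, set $D=\sum_{u\in U_{d+1}}g_u\chi_u$ as in~\eqref{eqn:Dast}; disjointness of the supports $g_uE$ makes $D$ a $\{\pm1\}$-valued function, and the computation of $DD^{(-1)}$ proceeds exactly as before, the cross terms dying by \cref{prop:orthogonality}~$(i)$ and the diagonal terms summing via \cref{prop:orthogonality}~$(ii)$ to $2^{d+1}\cdot 2^{d+1}=|G|$.

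To produce the transversal I would first set up the group action to which \cref{thm:drisko} applies. Since $E$ is normal, $G$ acts on $E$ by conjugation through automorphisms, and this action dualises to an action of $G$ on the set $S:=\{\chi_u:u\in U_{d+1}\}$ of characters of $E$; concretely, $g\chi_u g^{-1}$ is again a character of $E$, as one checks from~\eqref{eqn:chiu}. Because $E$ is abelian, its own elements act trivially, so the action factors through $H:=G/E$. Now $H$ is a $2$-group with $|H|=2^{d+1}=|S|$, and the principal character $\chi_0$ is a fixed point of the action since every automorphism of $E$ fixes it. Thus all hypotheses of \cref{thm:drisko} hold with $p=2$.

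Next I would apply \cref{thm:drisko} to obtain a bijection $\theta$ from $S$ to $H=G/E$ such that $\{\phi(\theta(s))(s):s\in S\}=S$, where $\phi$ is the conjugation action just described. For each $u$, pick a representative $g_u$ of the coset $\theta(\chi_u)\in G/E$. As $\theta$ is a bijection onto $G/E$, the elements $g_u$ form a transversal of $E$ in $G$; and since the action factors through $G/E$, the choice of representative within a coset is irrelevant, so $g_u\chi_u g_u^{-1}=\phi(\theta(\chi_u))(\chi_u)$. Hence $\{g_u\chi_u g_u^{-1}:u\in U_{d+1}\}=S$, which is exactly the property needed to make the McFarland computation go through, and therefore $G\in{\cal H}$.

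I expect the only real obstacle to be conceptual: identifying $H=G/E$, acting on the $2^{d+1}$ characters of $E$ with $\chi_0$ as the guaranteed fixed point, as the correct input to \cref{thm:drisko}, and then carefully matching the abstract bijection $\theta$ and action $\phi$ of that theorem to conjugation inside $\Z G$. Verifying that the resulting $D$ is $\{\pm1\}$-valued and recomputing $DD^{(-1)}$ involves nothing new beyond \cref{thm:mcfarland} and \cref{prop:orthogonality}.
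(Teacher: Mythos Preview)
Your proposal is correct and follows essentially the same approach as the paper: apply \cref{thm:drisko} with $S=\widehat{E}$ and $H=G/E$ acting by conjugation, use the resulting bijection to obtain a transversal $\{g_u\}$ satisfying \eqref{eqn:permute-chiu}, and then rerun the McFarland computation verbatim. The only cosmetic difference is that the paper justifies $g\chi_u g^{-1}\in\widehat{E}$ by noting that the nonprincipal characters correspond to the index-$2$ subgroups of~$E$ (which are permuted by conjugation since $E$ is normal), whereas you phrase it as dualising the conjugation action on~$E$; both are valid.
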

\begin{proof}
Let $\widehat{E} = \{\chi_u : u \in U_{d+1}\}$ be the set of characters of $E \cong C_2^{d+1}$.
We wish to apply \cref{thm:drisko} with $S = \widehat{E}$ and $H = G/E$.
Since $E$ is normal in $G$, and the complements of the subsets of $E$ associated with the characters $\chi_u$ for nonzero~$u$ are exactly the subgroups of $E$ of index~$2$, we have
\[
g \chi_u g^{-1} \in \widehat{E} \quad \mbox{for all $g \in G$ and $\chi_u \in \widehat{E}$}.
\]
Therefore $G/E$ acts on $\widehat{E}$ as a group of permutations under the conjugation action
\[
\phi(gE)(\chi_u) = g \chi_u g^{-1} \quad \mbox{for all $gE \in G/E$ and $\chi_u \in \widehat{E}$},
\]
and the element $\chi_0 = E$ of $\widehat{E}$ is fixed under~$\phi$.
\cref{thm:drisko} then shows that there is a bijection $\theta$ from $\widehat{E}$ to $G/E$ satisfying
\begin{equation}
\label{eqn:theta}
\big\{\phi\big(\theta(\chi_u)\big)(\chi_u) : \chi_u \in \widehat{E}\big\} = \widehat{E}.
\end{equation}
Writing $\theta(\chi_u) = g_u E$ for each $u \in U_{d+1}$, this gives a set $\{g_u: u \in U_{d+1}\}$ of coset representatives for $E$ in $G$ satisfying 
\begin{equation}
\label{eqn:permute-chiu}
\{g_u \chi_u g_u^{-1} : u \in U_{d+1}\} = \{\chi_u: u \in U_{d+1}\}.
\end{equation}
Use the coset representatives $g_u$ to define $D$ as in~\eqref{eqn:Dast}.
The proof of \cref{thm:mcfarland} now carries through unchanged, using \eqref{eqn:permute-chiu} to obtain \eqref{eqn:step2} from~\eqref{eqn:step1}.
\end{proof}

We next illustrate the construction described in \cref{cor:drisko}, for a specific group of order~16.
\begin{example}
Let $G$ be the order $16$ modular group $C_8 \rtimes_{5} C_2 
 = \langle x,y : x^8=y^2=1, \, yxy^{-1} = x^5 \rangle$, and set $X = x^4$ and $Y=y$. 
Let $E = \langle X,Y \rangle \cong C_2^2$, which is normal but not central in $G$, and let $\widehat{E} = \{\chi_u: u \in U_2\}$ be the set of characters of $E$:
\[
\chi_{00} = (1+x^4)(1+y), \,\,
\chi_{01} = (1+x^4)(1-y), \,\,
\chi_{10} = (1-x^4)(1+y), \,\,
\chi_{11} = (1-x^4)(1-y).
\]
The center of $G$ is~$\langle x^2 \rangle$.

The group $G/E = \{E, xE, x^2E, x^3E\}$ acts on $\widehat{E}$ as a group of permutations under the conjugation action~$\phi$, under which $E$ and $x^2E$ map to the identity permutation on~$\widehat{E}$, and $xE$ and $x^3E$ map to the permutation of $\widehat{E}$ that fixes $\chi_{00}$ and $\chi_{01}$ but swaps $\chi_{10}$ and~$\chi_{11}$.

A bijection $\theta$ from $\widehat{E}$ to $G/E$ satisfying \eqref{eqn:theta} is 
\[
\theta(\chi_{00}) = E, \quad
\theta(\chi_{01}) = x^2E, \quad
\theta(\chi_{10}) = xE, \quad
\theta(\chi_{11}) = x^3E,
\]
and therefore
\[
D = \chi_{00} + x^2\chi_{01} + x\chi_{10} + x^3\chi_{11}
\]
is a difference set in~$G$.
\end{example}

The Turyn exponent bound of \cref{cor:tb} gives a necessary condition for an abelian $2$-group to belong to~${\cal H}$. A series of papers, including \cite{davis} and \cite{dillon2groups}, gave constructions in pursuit of a sufficient condition. Kraemer \cite{kraemer} eventually showed that the necessary condition is also sufficient. This result was proved again by Jedwab \cite{jedwab} using the alternative viewpoint of a perfect binary array: a matrix representation of the $\{\pm 1\}$-valued characteristic function of a Hadamard difference set in an abelian group.

\begin{thm}[Kraemer \cite {kraemer}]
\label{thm:kraemer}
Let $G$ be an abelian group of order $2^{2d+2}$. Then $G \in {\cal H}$ if and only if $G$ has exponent at most $2^{d+2}$. 
\end{thm}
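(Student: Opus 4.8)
The forward direction of \cref{thm:kraemer} is not new: the necessity of the exponent bound is exactly \cref{cor:tb}. So a proof proposal really concerns the converse — that \emph{every} abelian group $G$ of order $2^{2d+2}$ with exponent at most $2^{d+2}$ lies in ${\cal H}$ — and I should say at the outset that this is a deep theorem whose full proof is long; what follows is only the overall strategy. The plan is to induct on $d$. The base case $d=0$ is immediate, since both abelian groups of order $4$ contain the trivial Hadamard difference set $\{g\}$, with parameters $(4,1,0)$.

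For the inductive step, the first move is to dispose of all groups of large $2$-rank. If $G$ contains a subgroup isomorphic to $C_2^{d+1}$ — equivalently, $G$ has $2$-rank at least $d+1$ — then $G \in {\cal H}$ by \cref{cor:drisko}, with no appeal to the inductive hypothesis at all. For $d \le 1$ this already settles every admissible $G$: for $d=1$ one checks that $C_2^4$, $C_4 \times C_2^2$, $C_4^2$ and $C_8 \times C_2$ all have $2$-rank at least $2$. Hence the genuine work is confined to $d \ge 2$ and to groups of $2$-rank between $2$ and $d$, in particular the ``extremal'' families $C_{2^{d+1}} \times C_{2^{d+1}}$ and $C_{2^{d+2}} \times C_{2^d}$, to which neither \cref{cor:drisko} nor the Dillon product construction (\cref{thm:productconstruction}) applies.

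For these low-rank groups I would use a recursive construction that \emph{raises the exponent}, structured as a refinement of the passage from \cref{thm:mcfarland} to \cref{cor:drisko}. A short argument on invariant factors shows that any admissible $G$ of $2$-rank at most $d$ (with $d \ge 2$) has a subgroup $N \cong C_4$ for which $G/N$ is abelian of order $2^{2d}$ and exponent at most $2^{d+1}$, so that $G/N \in {\cal H}$ by induction — but a single difference set in $G/N$ does not pull back to one in $G$. What is needed instead is a \emph{family} $\{B_i\}$ of subsets of a subgroup of $G$ of suitable index $2^r$ (a ``building set''), indexed so that it can be combined with the characters of an elementary abelian glue group $E_r$ in the manner of \eqref{eqn:Dast}; the orthogonality relations of \cref{prop:orthogonality} then collapse the equation $DD^{(-1)} = |G|$ to a purely combinatorial \emph{covering} requirement, namely that each nonprincipal character of $G$ be annihilated by every block save one, on which it attains the extreme value $2^{d+1}$. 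Producing such a building set — with a spectrum of character values that covers all nonprincipal characters of $G$ with exactly the right multiplicities — is precisely what cannot be extracted from the bare inductive hypothesis; it is obtained by a finer recursion, organised through the (covering) extended building set formalism of Davis and Jedwab.

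Consequently the single hard step is this covering construction for the low-rank, high-exponent groups; everything else — the base case, the high-rank case via \cref{cor:drisko}, and the character-theoretic verification once a covering building set is in hand — is comparatively routine. The technical crux is, simultaneously, (i) choosing the glue rank $r$ and the ambient subgroup of $G$, (ii) using induction to manufacture the building-set blocks inside that subgroup, and (iii) checking that their character spectra tile the nonprincipal characters of $G$ with the correct multiplicities; it is step (iii) — the bookkeeping of character values across the recursion, and in particular for $C_{2^{d+1}}^2$ and $C_{2^{d+2}} \times C_{2^d}$ — that is the real obstacle, and it is exactly the part of the Kraemer/Jedwab argument that the present paper sets out to streamline.
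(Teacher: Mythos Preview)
Your outline is correct and captures the classical route to \cref{thm:kraemer}: necessity from the Turyn bound, high-rank groups via \cref{cor:drisko}, and the hard low-rank case via an inductively constructed covering extended building set in the sense of Davis--Jedwab. The paper does not give a standalone proof of \cref{thm:kraemer} but recovers it, as remarked just before \cref{thm:main}, from \cref{thm:abelian-sigset} together with \cref{cor:tb}. The difference from your sketch is that the paper replaces the general CEBS machinery by the more rigid notion of a \emph{signature set} (\cref{defn:sig}): each block is forced to factor as $B_u = A_u\chi_u$ with $A_u$ a $\{\pm1\}$-valued function on a transversal of $E_r$ in~$K$, and your covering requirement collapses to the single group-ring identity $A_u\chi_u A_u^{(-1)} = \tfrac{|K|}{2^r}\chi_u$. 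This extra structure is precisely what makes the recursion (\eqref{eqn:rank2recursion} and its generalisation \eqref{eqn:genrecursion}) go through without the character-value bookkeeping you correctly flag as the obstacle in step~(iii); and, because the orthogonality of the $B_u$ in \cref{lem:sig} is now a group-ring identity rather than a statement about abelian characters, it survives passage to nonabelian overgroups via \cref{thm:drisko} and \cref{thm:prehadamard}, which is exactly what \cref{thm:main} buys beyond Kraemer's abelian case.
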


We next give an instructive example of a Hadamard difference set in an abelian $2$-group, which illustrates a fundamental insight on which this paper is based.
The group ring elements $A_u$ in \cref{ex:C82} are presented for now without explanation of their origin, but will be revisited in \cref{ex:C82revisit}.
Group ring elements $A, B$ are {\em orthogonal} if $AB^{(-1)} = 0$.

\begin{example}
\label{ex:C82}
Let 
$G = C_8^2 = \langle x, y \rangle$, and set $X = x^2$ and $Y = y^2$. 
Let $K=\langle X, Y \rangle \cong C_4^2$ and
$E_2 =\langle X^2, Y^2 \rangle \cong C_2^2$, and let
$\{\chi_u: u \in U_2\}$ be the set of characters of~$E_2$.
Define four group ring elements in $\Z K$ by
\begin{equation}
\label{eqn:Auturyn}
A_{00} = A_{01} = A_{10} = 1 + X + Y - X Y \quad \mbox{and} \quad
A_{11} = 1 + X + Y + X Y.
\end{equation}
Direct calculation shows that the $A_u$ satisfy the condition
\begin{equation}
\label{eqn:Aupropturyn}
A_u \chi_u A_u^{(-1)} = 4 \chi_u \quad \mbox{for all $u \in U_2$}.
\end{equation}
Now in $\Z K$ let 
\begin{align*}
B_{00} &= A_{00} \chi_{00} = (1 + X + Y - X Y) (1+X^2)(1+Y^2), \\[0.5ex]
B_{01} &= A_{01} \chi_{01} = (1 + X + Y - X Y) (1+X^2)(1-Y^2), \\[0.5ex]
B_{10} &= A_{10} \chi_{10} = (1 + X + Y - X Y) (1-X^2)(1+Y^2), \\[0.5ex]
B_{11} &= A_{11} \chi_{11} = (1 + X + Y + X Y) (1-X^2)(1-Y^2). 
\end{align*}
Then from \cref{prop:orthogonality}~(i) and \eqref{eqn:Aupropturyn}, the $B_u = A_u \chi_u$ have the property, for all $u, v \in U_2$, that
\begin{equation}
\label{eqn:Bupropturyn}
B_u B_v^{(-1)} = 
  \begin{cases} 16 \chi_u & \mbox{if $u=v$}, \\
		0	  & \mbox{if $u \ne v$},
  \end{cases} 
\end{equation}
and in particular the $B_u$ are pairwise orthogonal.
It follows that the $\{\pm 1\}$-valued function on $G$ given by
\[
D = B_{00} + y B_{01} + x B_{10} + xy B_{11}
\]
satisfies 
\begin{align*}
D D^{(-1)} 
 &= 16(\chi_{00} + \chi_{01} + \chi_{10} + \chi_{11}) \\ 
 &= 64 
\end{align*}
by \cref{prop:orthogonality}~(ii), and so $D$ corresponds to a Hadamard difference set in~$G$.
\end{example}
We now show how the condition \eqref{eqn:Aupropturyn} satisfied by the group ring elements $A_u$ in \cref{ex:C82} can be used to construct difference sets in groups of order~64 other than~$C_8^2$.

\begin{prop}
\label{prop:dillon64}
Let $G$ be a group of order $64$ containing a normal subgroup $K \cong C_4^2$. Then $G \in {\cal H}$.
\end{prop}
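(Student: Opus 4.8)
The plan is to mimic the proof of \cref{cor:drisko}, but with the $\{\pm1\}$-valued characters $\chi_u$ of $E_{d+1}$ replaced by the group ring elements $B_u = A_u\chi_u \in \Z K$ from \cref{ex:C82}, working with $d=2$, $|G|=64$, $|K|=16$, $K\cong C_4^2$, and $E_2 \le K$ the unique subgroup isomorphic to $C_2^2$ contained in the ``squares'' of $K$. The crucial input is \eqref{eqn:Bupropturyn}: the four elements $B_u$ satisfy $B_uB_v^{(-1)} = 16\chi_u$ if $u=v$ and $0$ otherwise. If we can find coset representatives $g_u$ of $K$ in $G$ (one for each $u\in U_2$) such that the conjugated family $\{g_uB_ug_u^{-1}\}$ still satisfies these same orthogonality relations, then $D = \sum_{u} g_uB_u$ will satisfy $DD^{(-1)} = 16\sum_u g_u\chi_u g_u^{-1}$, and — provided the conjugates $g_u\chi_ug_u^{-1}$ run over all four characters of $E_2$ — this equals $16\cdot\sum_{u}\chi_u = 16\cdot 4 = 64 = |G|$ by \cref{prop:orthogonality}~$(ii)$. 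That would exhibit $D$ as a Hadamard difference set in $G$.

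First I would check that conjugation by $G$ permutes the relevant objects. Since $K$ is normal in $G$ and $E_2$ is the characteristic subgroup of $K$ consisting of the identity together with the elements of order $2$ that are squares (equivalently, $E_2 = \{k^2 : k \in K\}$ when $K\cong C_4^2$), conjugation by any $g\in G$ maps $E_2$ to itself and hence permutes $\widehat{E_2} = \{\chi_u : u\in U_2\}$. The quotient $G/K$ is a $2$-group of order $4$ acting on the size-$4$ set $\widehat{E_2}$ by conjugation, fixing $\chi_0$, so \cref{thm:drisko} applies to give a bijection $\theta:\widehat{E_2}\to G/K$ with $\{\phi(\theta(\chi_u))(\chi_u)\} = \widehat{E_2}$; writing $\theta(\chi_u) = g_uK$ yields coset representatives with $\{g_u\chi_ug_u^{-1}\} = \{\chi_u\}$, exactly as in \cref{cor:drisko}. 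This handles the character part and gives \eqref{eqn:step2}-type collapse.

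The main obstacle — and the step I would spend the most care on — is verifying that the conjugated elements $g_uB_ug_u^{-1}$ retain the orthogonality property \eqref{eqn:Bupropturyn}, i.e.\ that $(g_uB_ug_u^{-1})(g_vB_vg_v^{-1})^{(-1)}$ equals $16\,g_u\chi_ug_u^{-1}$ when $u=v$ and $0$ when $u\ne v$. For $u=v$ this is automatic, since conjugation is a ring automorphism of $\Z K$ (as $K\lhd G$) and so $g_uB_uB_u^{(-1)}g_u^{-1} = 16\,g_u\chi_ug_u^{-1}$. The delicate case is $u\ne v$, where the two conjugating elements differ: we need $g_uB_u B_v^{(-1)} g_v^{-1}=0$ to still hold after twisting the second factor by $g_vg_u^{-1}\notin K$ in general. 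I would attack this by observing that $B_u = A_u\chi_u$ lies in the ideal $\Z K\cdot\chi_u$, that $\chi_u\chi_v^{(-1)}=0$ for $u\ne v$ by \cref{prop:orthogonality}~$(i)$, and that conjugation sends $\Z K\cdot\chi_u$ into $\Z K\cdot(g\chi_ug^{-1})$; the product of an element of $\Z K\cdot\chi_a$ with the $(-1)$-transpose of an element of $\Z K\cdot\chi_b$ vanishes whenever $\chi_a\ne\chi_b$ as characters of $E_2$. Since $\theta$ was chosen so that $g_u\chi_ug_u^{-1}$ are distinct for distinct $u$, the characters attached to $g_uB_ug_u^{-1}$ and $g_vB_vg_v^{-1}$ are distinct when $u\ne v$, forcing the cross terms to vanish. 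The one remaining check is that $D$ is genuinely $\{\pm1\}$-valued: each $B_u$ is supported on a coset structure inside $K$ and is $\{0,\pm1\}$-valued with disjoint-looking support patterns, so the four translates $g_uB_u$ have pairwise disjoint supports that partition $G$ — this is a finite verification that I would carry out explicitly, possibly absorbing it into the structure of the $A_u$ (each $A_u\chi_u$ being $\{\pm1\}$-valued on the subgroup $\langle X,Y\rangle$-coset it occupies). Assembling these pieces gives $DD^{(-1)} = 16\sum_u g_u\chi_ug_u^{-1} = 64 = |G|$, completing the proof.
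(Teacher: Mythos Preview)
Your approach is correct and matches the paper's proof, but you have manufactured an obstacle that is not there. When you expand
\[
DD^{(-1)} = \sum_{u,v \in U_2} (g_u B_u)(g_v B_v)^{(-1)} = \sum_{u,v \in U_2} g_u\, B_u B_v^{(-1)}\, g_v^{-1},
\]
the middle factor $B_u B_v^{(-1)}$ is computed entirely in $\Z K$ and already vanishes for $u \ne v$ by \eqref{eqn:Bupropturyn}; multiplying zero on the left by $g_u$ and on the right by $g_v^{-1}$ keeps it zero. There is no ``twisting by $g_v g_u^{-1}$'' inside that product, and no need to analyse the conjugated elements $g_u B_u g_u^{-1}$ or their ideal membership at all. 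The paper's computation is exactly the two-line display
\[
DD^{(-1)} = \sum_{u,v} g_u B_u B_v^{(-1)} g_v^{-1} = 16 \sum_u g_u \chi_u g_u^{-1},
\]
followed by Drisko's permutation property \eqref{eqn:permute-chiu2} and \cref{prop:orthogonality}~$(ii)$. Your detour through orthogonality of the conjugates $g_u B_u g_u^{-1}$ would also work, but it addresses a harder question than the one the proof requires.

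For the $\{\pm1\}$-valued check: each $B_u = A_u\chi_u$ is $\{\pm1\}$-valued on all of $K$ (not just on a coset), so the four translates $g_u B_u$ are $\{\pm1\}$-valued on the four distinct cosets $g_u K$, whose disjoint union is~$G$. This is immediate once stated, not a ``finite verification''.
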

\begin{proof}
Let $K = \langle X, Y \rangle \cong C_4^2$.
Let $E_2 = \langle X^2, Y^2 \rangle$ be the unique subgroup of $K$ isomorphic to $C_2^2$, and let $\widehat{E_2} = \{\chi_u : u \in U_2\}$ be the set of characters of~$E_2$.
Define four group ring elements in $\Z K$ as in \eqref{eqn:Auturyn},
and for each $u \in U_2$ let $B_u$ be the $\{\pm 1\}$-valued function $A_u \chi_u$ on~$K$.
The $A_u$ satisfy \eqref{eqn:Aupropturyn}, and therefore the $B_u$ have the pairwise orthogonality property~\eqref{eqn:Bupropturyn} for all $u,v \in U_2$.

Now $E_2$ is the unique subgroup of $K$ isomorphic to $C_2^2$, and $K$ is normal in $G$, so $E_2$ is normal in~$G$.
Therefore $G/K$ acts on $\widehat{E_2}$ as a group of permutations under the conjugation action
\[
\phi(gK)(\chi_u) = g \chi_u g^{-1} \quad \mbox{for all $gK \in G/K$ and $\chi_u \in \widehat{E_2}$},
\]
and $\chi_0 = E_2$ is fixed under~$\phi$.
We may therefore apply \cref{thm:drisko} with $S = \widehat{E_2}$ and $H = G/K$ to show that
there is a set $\{g_u : u \in U_{2}\}$ of coset representatives for $K$ in $G$ satisfying
\begin{equation}
\label{eqn:permute-chiu2}
\{g_u \chi_u g_u^{-1} : u \in U_2\} = \{\chi_u: u \in U_2\}.
\end{equation}
Let $D$ be the $\{\pm 1\}$-valued function on $G$ defined by
\[
D = \sum_{u \in U_2} g_u B_u \mbox{ in $\Z G$}. 
\]
We calculate
\begin{align*}
DD^{(-1)}
  &= \sum_{u,v \in U_2} g_u B_u B_v^{(-1)}g_v^{-1}\\
  &= 16 \sum_{u \in U_2} g_u \chi_u g_u^{-1}
\end{align*}
by \eqref{eqn:Bupropturyn}, and then from \eqref{eqn:permute-chiu2} and \cref{prop:orthogonality}~$(ii)$ we have
\[
DD^{(-1)} = 16 \sum_{u \in U_2} \chi_u = 64.
\]
Therefore $D$ corresponds to a Hadamard difference set in~$G$.
\end{proof}

We use the proof of \cref{prop:dillon64} as a model for establishing our principal result, stated below as \cref{thm:main}. The key idea is to determine group ring elements $A_u$ satisfying a condition analogous to~\eqref{eqn:Aupropturyn}, which ensures that the associated group ring elements $B_u = A_u \chi_u$ have an orthogonality property analogous to~\eqref{eqn:Bupropturyn}. Application of \cref{thm:drisko} then allows us to construct a group ring element $D$ corresponding to a Hadamard difference set.
By taking $r=2$ in \cref{thm:main} and restricting the group $G$ to be abelian, and combining with the Turyn exponent bound of \cref{cor:tb}, we recover Kraemer's \cref{thm:kraemer}.

\begin{thm}[Main Result]
\label{thm:main}
Let $d$ and $r$ be integers satisfying $d \ge 1$ and $2 \le r \le d+1$. Let $G$ be a group of order $2^{2d+2}$ containing a normal abelian subgroup of index $2^r$, rank~$r$, and exponent at most~$2^{d-r+2}$. Then $G \in {\cal H}$. 
\end{thm}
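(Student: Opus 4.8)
The plan is to generalize the proof of \cref{prop:dillon64}: manufacture a configuration of group ring elements inside the normal abelian subgroup, and then use \cref{thm:drisko} to transfer it to the whole of $G$. Let $K \trianglelefteq G$ have index $2^r$, rank $r$, and exponent at most $2^{d-r+2}$, so that $|K| = 2^{2d+2-r}$. Put $E := \{k \in K : k^2 = 1\}$; since $K$ has rank $r$ we have $E \cong C_2^r$, and since $E$ is characteristic in $K$ and $K \trianglelefteq G$, also $E \trianglelefteq G$. Let $\widehat{E} = \{\chi_u : u \in U_r\}$ be the characters of $E$, regarded as $\{\pm1\}$-valued elements of $\Z E \subseteq \Z K$. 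The core task is to produce, for each $u \in U_r$, a \emph{signed transversal} $A_u$ of $E$ in $K$ (an element $\sum_t \varepsilon_t t \in \Z K$ with $\varepsilon_t \in \{\pm1\}$ and $t$ ranging over a set of coset representatives of $E$ in $K$), satisfying the analogue of~\eqref{eqn:Aupropturyn},
\[
A_u \chi_u A_u^{(-1)} = 2^{2d+2-2r}\, \chi_u \qquad \text{for all } u \in U_r.
\]
Because $A_u$ is a signed transversal, $B_u := A_u \chi_u$ is a $\{\pm1\}$-valued function on $K$; and \cref{prop:orthogonality}\,$(i)$ then gives the orthogonality relations $B_u B_v^{(-1)} = |K|\chi_u$ for $u=v$ and $B_u B_v^{(-1)} = 0$ for $u \neq v$, exactly matching~\eqref{eqn:Bupropturyn}.

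Granting the $A_u$, the rest is the argument of \cref{prop:dillon64} essentially verbatim. The quotient $G/K$ is a $2$-group of order $2^r = |\widehat{E}|$ which acts on $\widehat{E}$ by conjugation, $\phi(gK)(\chi_u) = g\chi_u g^{-1}$ (well defined on cosets since $K$ is abelian and $\chi_u \in \Z K$), fixing $\chi_0 = E$; so \cref{thm:drisko} yields coset representatives $\{g_u : u \in U_r\}$ for $K$ in $G$ with $\{g_u \chi_u g_u^{-1} : u \in U_r\} = \{\chi_u : u \in U_r\}$. Setting $D = \sum_{u \in U_r} g_u B_u$, the $g_u$ run over all $2^r$ cosets of $K$, so the summands tile $G$ and $D$ is $\{\pm1\}$-valued, and
\[
DD^{(-1)} = \sum_{u,v} g_u B_u B_v^{(-1)} g_v^{-1} = |K| \sum_{u} g_u \chi_u g_u^{-1} = |K| \sum_{u} \chi_u = 2^{2d+2-r}\cdot 2^r = |G|
\]
by \cref{prop:orthogonality}\,$(ii)$, so $D$ corresponds to a Hadamard difference set in $G$.

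Everything therefore reduces to constructing the family $\{A_u\}_{u \in U_r}$ in $K$, and this is the main obstacle. Applying an arbitrary character $\psi$ of the abelian group $K$ to the key condition, and using that $\psi(\chi_u)$ equals $|E|$ if $\psi|_E = \chi_u$ and $0$ otherwise, one sees the condition is equivalent to requiring $|\psi(A_u)|^2 = 2^{2d+2-2r}$ for every $\psi \in \widehat{K}$ with $\psi|_E = \chi_u$ (and is vacuous for all other $\psi$). The $\psi$ in question form a single coset of $\widehat{K/E}$ in $\widehat{K}$, of size $|K/E| = 2^{2d+2-2r}$, which equals the number of entries of $A_u$; so each $A_u$ must be a $\{\pm1\}$-valued signed transversal of $E$ in $K$ whose spectrum is flat of modulus $2^{d-r+1}$ over the relevant character coset. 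This is precisely a covering extended building set in $K$ in the sense of Davis and Jedwab, adapted to the present normal-subgroup framework. I would build one by recursion on $|K|$: when $d = r-1$ the group $K = C_2^r = E$ is forced, all $A_u = 1$ work, and $D = \sum_u g_u \chi_u$ recovers \cref{cor:drisko}; the inductive step passes from a group to one with a single cyclic direction doubled, via a building-set doubling lemma, and a general $K$ is reached after $2(d+1-r)$ such doublings, with at most $d+1-r$ of them in any one cyclic direction — this being exactly what the bound $\exp(K) \le 2^{d-r+2}$ encodes, and such a distribution of doublings being feasible precisely because $r \ge 2$. The technical heart, and the place where the Davis--Jedwab construction must be refined, is to design a doubling lemma that simultaneously preserves the signed-transversal property and the flat-spectrum property at each step; I expect this to be the delicate part of the argument.
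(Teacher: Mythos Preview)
Your proposal is correct and matches the paper's approach closely: the paper calls your signed transversals a \emph{signature set} (\cref{defn:sig}), packages their existence on every such abelian $K$ as \cref{thm:abelian-sigset}, and derives \cref{thm:main} from it via \cref{thm:prehadamard} exactly as you describe. The one wrinkle worth noting is your single-direction doubling: a signature set on $K$ relative to $E\cong C_2^r$ forces $2^{r-2}|K|$ to be a perfect square (\cref{lem:-1}), so after an odd number of single doublings the intermediate group cannot carry a signature set. The paper's recursion~\eqref{eqn:genrecursion} avoids this by inducting on $d$ and doubling two cyclic directions at once (with a separate product step via \cref{prop:sig-prod} when a $C_2$ factor is present), staying inside the signature-set world throughout; your route would need the intermediate objects to be more general building sets, which is exactly the delicacy you flag.
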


We remark that this paper develops several concepts previously used to construct difference sets. 
In particular, the constructed group ring elements $B_u$ can be interpreted as covering extended building sets, as introduced by Davis and Jedwab~\cite{unifying} in 1997 (see the discussion at the end of \cref{sec:sig}). The novelty here is that imposing the additional structure $B_u = A_u \chi_u$ allows us to handle dramatically more nonabelian groups than before, as illustrated in the proof of \cref{prop:dillon64}. 
Likewise, \cref{prop:dillon64} itself was previously established by Dillon \cite{dillon-MH-conf, dillon-galway} by decomposing a difference set in $C_8^2$ into four orthogonal group ring elements $B_u$ as in \cref{ex:C82}.
However, the generalization of \cref{prop:dillon64} to \cref{thm:main} relies crucially on recognizing the additional structure $B_u = A_u \chi_u$ of these group ring elements, whose importance was not previously apparent.

The third column of \cref{tab:2-groups} below shows the number of groups of order 16, 64, and 256 which are 
possible members of~${\cal H}$, after taking into account those that are excluded by the necessary conditions of Theorems~\ref{thm:turyn} and~\ref{thm:dillon}.
We now summarize the theoretical and computational efforts of many researchers over several decades to determine whether these conditions are also sufficient for groups of these orders, with reference to results to be presented in \cref{sec:further}.

In the 1970s, Whitehead~\cite{whitehead} and Kibler~\cite{kibler} independently showed by construction that each of the 12 non-excluded groups of order~16 belongs to~${\cal H}$.
We can recover this result by applying \cref{thm:main} to account for the 10 groups containing a normal subgroup isomorphic to $C_2^2$, and then using \cref{prop:sigsetQ} to handle the remaining 2 groups.

In 1990, a collaborative effort led by Dillon showed by a combination of construction and computer search that each of the 259 non-excluded groups of order~64 belongs to~${\cal H}$;
Liebler and Smith~\cite{smithliebler} resolved the status of the final group at the conclusion of a sabbatical visit to Dillon by Smith.
Using the GAP software package~\cite{gap}, we can streamline this effort by applying in sequence the following construction methods: 
\cref{thm:main} to account for the 237 groups containing a normal subgroup isomorphic to $C_2^3$ or~$C_4^2$; 
the product construction of \cref{prop:productPTAgivesDS} to account for 17 further groups; 
the transfer methods of \cref{subsec:transfer} to account for 4 further groups; 
and the modified signature set method of \cref{subsec:final} to account for the final group.

In 2011, Dillon initiated a further collaborative effort to determine which of the 56,049 non-excluded groups of order 256 belong to~${\cal H}$. 
Major contributions were made by Applebaum~\cite{applebaum}, and the status of the final group was resolved by Yolland~\cite{yolland} in 2016. 
Using GAP and again streamlining, we announce that all 56,049 non-excluded groups of order 256 belong to~${\cal H}$, and this can be demonstrated by applying in sequence the following construction methods:
\cref{thm:main} to account for the 54,633 groups containing a normal subgroup isomorphic to $C_2^4$ or $C_4^2 \times C_2$ or $C_8^2$;
the product construction of \cref{prop:productPTAgivesDS} to account for 1,358 further groups;
the transfer methods of \cref{subsec:transfer} to account for 57 further groups;
and the modified signature set method of \cref{subsec:final} to account for the final group.

These theoretical and computational results are summarized  in \cref{tab:2-groups}.

\begin{table}[h]
\begin{center}
\begin{tabular}{|c|c|c|c|c|c|c|}
												     \hline
Group 	& Total \# 	& \# not excluded	
					& \multicolumn{4}{c|}{\# in ${\cal H}$ by} 		  \\ \cline{4-7}
order	& groups	& by Theorems 	& Theorem  	& Sections 	& Section 	& Section \\ 
	&		& \ref{thm:turyn}, \ref{thm:dillon}	
					& \ref{thm:main}
							& \ref{subsec:nonabelian-sigset}--\ref{subsec:productPTA}
									& \ref{subsec:transfer}
											& \ref{subsec:final}
												\\ \hline
16	& 14		& 12		& 10		& 2		&  		&  	\\
64	& 267		& 259		& 237		& 17 		& 4		& 1	\\
256	& 56,092	& 56,049	& 54,633	& 1,358		& 57	& 1	\\ \hline
\end{tabular}
\end{center}
\caption{Membership in ${\cal H}$ of $2$-groups of order 16, 64, and 256.
Figures in column 5 onwards are for groups not previously counted in column 4 onwards.}
\label{tab:2-groups}
\end{table}

The results displayed in \cref{tab:2-groups} naturally prompt the following question (about whose answer the authors of this paper have different opinions).
\begin{question}
\label{quest:the-big-one}
Are the necessary conditions of Theorems~$\ref{thm:turyn}$ and~$\ref{thm:dillon}$ for the existence of a difference set in a $2$-group also sufficient? That is, does every group $G$ of order $2^{2d+2}$, not containing a normal subgroup $K$ of order less than $2^d$ such that $G/K$ is cyclic or dihedral, belong to ${\cal H}$?
\end{question}

We have seen that the answer to \cref{quest:the-big-one} is ``yes'' for $d = 0, 1, 2, 3$ (noting for $d=0$ that both groups of order 4 contain a trivial difference set). It seems that resolution of this question for larger $d$ must depend only on theoretical methods: currently there is not even a database of the 49,487,367,289 groups of order 1024 \cite{besche-eick-obrien,burrell}, and the authors do not know how to estimate the proportion of the non-excluded groups of order $2^{2d+2}$ that are accounted for by \cref{thm:main} as $d$ grows large.

The rest of this paper is organized in the following way.
In \cref{sec:sig}, we identify the ``signature set'' property underlying the construction of \cref{prop:dillon64}.
In \cref{sec:abelian}, we prove our principal result of \cref{thm:main} by restricting attention to signature sets on abelian $2$-groups. 
In \cref{sec:further}, we describe the various other construction methods used to complete the determination of the groups of order 64 and 256 belonging to ${\cal H}$, involving signature sets on nonabelian groups, products of perfect ternary arrays, transfer methods, and a modification of signature sets.
In \cref{sec:verification}, we provide implementation details of the construction methods for groups of order 256 and describe how to quickly verify on a desktop computer that all 56,049 non-excluded groups of this order belong to ${\cal H}$.
In \cref{sec:future}, we propose some directions for future research.

%%%%%%%%%%%%%%%%%%%%%%%%%%%%%%%%%%%%%%%%%%%%%%%%%%
\section{Signature Sets}
%%%%%%%%%%%%%%%%%%%%%%%%%%%%%%%%%%%%%%%%%%%%%%%%%%
\label{sec:sig}
In this section, we identify the structure underlying \cref{prop:dillon64} and set out a framework for proving our principal result, \cref{thm:main}.

\begin{defn}
\label{defn:sig}
Let $K$ be a group containing a normal subgroup $E \cong C_2^r$, and let $\{\chi_u: u\in U_r\}$ be the set of characters of~$E$. 
A \emph{signature block on $K$ with respect to $\chi_u$} is a $\{\pm 1\}$-valued function $A_u$ on a set of coset representatives for $E$ in $K$ that satisfies
\[
A_u \chi_u A_u^{(-1)} = \tfrac{|K|}{2^r} \chi_u \quad \mbox{in $\Z K$}. 
\]
A \emph{signature set on $K$ with respect to $E$} is a multiset $\{A_u: u \in U_r\}$, where each $A_u$ is a signature block on~$K$ with respect to~$\chi_u$.
\end{defn}
\noindent
Note that a trivial signature set on $C_2^r$ with respect to itself is given by
\[
A_u = 1 \quad \mbox{for each $u \in U_r$}.
\]

We state two immediate consequences of \cref{defn:sig}.
\begin{lemma}
\label{lem:sig}
Let $K$ be a group containing a normal subgroup $E \cong C_2^r$, and suppose $\{A_u: u \in U_r\}$ is a signature set on $K$ with respect to~$E$. 
Let $\widehat{E} = \{\chi_u: u \in U_r\}$ be the set of characters of $E$, and let $B_u = A_u \chi_u$ for each $u \in U_r$. Then:
\begin{enumerate}[$(i)$]
\item
for each $u \in U_r$, the function $B_u$ is $\{\pm 1\}$-valued on~$K$.

\item
for all $u, v \in U_r$, in $\Z K$ we have
\[
B_u B_v^{(-1)} 
 = \begin{cases} |K| \chi_u 	& \mbox{if $u=v$}, \\
		  0	     	& \mbox{if $u \ne v$}
    \end{cases} 
\]
(and so in particular the $B_u$ are pairwise orthogonal).
\end{enumerate}
\end{lemma}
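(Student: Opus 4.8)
The plan is to verify both claims by direct computation in $\Z K$, relying on the defining property of a signature block together with \cref{prop:orthogonality}. Since this is \cref{lem:sig}, stated as an ``immediate consequence'' of \cref{defn:sig}, I expect each part to be short.

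For part $(i)$, I would argue that $B_u = A_u\chi_u$ is $\{\pm 1\}$-valued because $A_u$ is a $\{\pm 1\}$-valued function on a set of coset representatives for $E$ in $K$, and $\chi_u$ is a $\{\pm 1\}$-valued function on $E$ (being a character). Writing each element of $K$ uniquely as $ge$ with $g$ a chosen coset representative and $e \in E$, the product $A_u\chi_u$ assigns to $ge$ the value $A_u(g)\chi_u(e) \in \{\pm 1\}$. Concretely, $A_u\chi_u = \bigl(\sum_g A_u(g)\,g\bigr)\bigl(\sum_{e\in E}\chi_u(e)\,e\bigr) = \sum_{g,e} A_u(g)\chi_u(e)\,ge$, and the map $(g,e)\mapsto ge$ is a bijection from (coset reps)$\times E$ onto $K$, so every coefficient of $B_u$ is some $A_u(g)\chi_u(e) = \pm 1$.

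For part $(ii)$, the computation in the case $u = v$ mirrors \eqref{eqn:Aupropturyn}--\eqref{eqn:Bupropturyn} in \cref{ex:C82}: since $\chi_u^{(-1)} = \chi_u$, we have
\[
B_u B_u^{(-1)} = A_u\chi_u\chi_u^{(-1)}A_u^{(-1)} = A_u\chi_u^2 A_u^{(-1)} = 2^r\, A_u\chi_u A_u^{(-1)} = 2^r\cdot\tfrac{|K|}{2^r}\chi_u = |K|\chi_u,
\]
using $\chi_u\chi_u^{(-1)} = 2^r\chi_u$ from \cref{prop:orthogonality}~$(i)$ and then the signature block identity from \cref{defn:sig}. (One should note $\chi_u$ is central in $\Z E$, hence commutes with itself, which is all that is needed here; the $A_u$ need not commute with anything.) For the case $u \ne v$, I would want to move a $\chi_u\chi_v^{(-1)}$ next to each other and invoke \cref{prop:orthogonality}~$(i)$, which gives $0$. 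The slight subtlety is that $A_u$ and $\chi_v^{(-1)}$ do not obviously commute, so I cannot literally write $B_u B_v^{(-1)} = A_u A_v^{(-1)}\chi_u\chi_v^{(-1)}$. Instead I would note that $\chi_v^{(-1)} = \chi_v$ is, up to the scalar $2^r$, idempotent and central in $\Z E$; more usefully, $\chi_u\chi_v = 0$ already as elements of $\Z E \subseteq \Z K$, so $B_u B_v^{(-1)} = A_u\chi_u\chi_v A_v^{(-1)} = A_u\cdot 0\cdot A_v^{(-1)} = 0$ directly once the two characters are adjacent — and they are already adjacent in $A_u\chi_u\cdot\chi_v^{(-1)}A_v^{(-1)}$ after using $\chi_v^{(-1)} = \chi_v$.

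The only thing resembling an obstacle is bookkeeping with noncommutativity: making sure that each manipulation either keeps the two character factors adjacent (so \cref{prop:orthogonality}~$(i)$ applies) or only commutes elements of $\Z E$ with each other (which is legitimate since $E$ is abelian), never silently commuting an $A_u$ past a character. Once that is handled, both parts follow immediately, and the parenthetical pairwise-orthogonality remark is just the $u\ne v$ case restated together with the definition $AB^{(-1)} = 0$ of orthogonality given before \cref{ex:C82}.
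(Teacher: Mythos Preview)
Your proof is correct and follows essentially the same approach as the paper: part~$(i)$ is argued exactly as in the paper via the bijection between (coset representatives)$\times E$ and $K$, and part~$(ii)$ is the same computation $B_u B_v^{(-1)} = A_u\chi_u\chi_v^{(-1)}A_v^{(-1)}$ followed by \cref{prop:orthogonality}~$(i)$ and \cref{defn:sig}. Your extra care about noncommutativity is well placed but ultimately unnecessary here, since $(A_v\chi_v)^{(-1)} = \chi_v^{(-1)}A_v^{(-1)}$ already places the two characters adjacent, exactly as the paper (implicitly) uses.
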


\begin{proof}
\begin{enumerate}[$(i)$]
\item
Each $A_u$ is a $\{\pm 1\}$-valued function on a set of coset representatives for $E$ in $K$, and each $\chi_u$ is a $\{\pm 1\}$-valued function on~$E$. Therefore each $B_u = A_u \chi_u$ is a $\{\pm 1\}$-valued function on~$K$.

\item
For all $u, v \in U_r$, in $\Z K$ we have
\begin{align*}
B_u B_v^{(-1)} 
 &= A_u \chi_u \chi_v^{(-1)} A_v^{(-1)} \nonumber \\
 &= \begin{cases} 2^r A_u \chi_u A_u^{(-1)} 	& \mbox{if $u=v$}, \\
		  0	     			& \mbox{if $u \ne v$}
    \end{cases} 
\end{align*}
by \cref{prop:orthogonality}~$(i)$. Since the $A_u$ form a signature set on $K$ with respect to $E$, this gives
\[
B_u B_v^{(-1)} = 
    \begin{cases} |K| \chi_u 	& \mbox{if $u=v$}, \\
		  0		& \mbox{if $u \ne v$}.
    \end{cases} 
\]

\end{enumerate}
\end{proof}
\noindent

The proof of the following theorem is modelled on that of \cref{prop:dillon64}.
We remark that $K$ need not be a $2$-group and need not be abelian.

\begin{thm} 
\label{thm:prehadamard}
Let $G$ be a group containing a normal subgroup $E \cong C_2^r$,
and suppose $K$ is a normal subgroup of $G$ of index~$2^r$ containing~$E$.
Suppose there exists a signature set on $K$ with respect to~$E$.
Then $G \in {\cal H}$.
\end{thm}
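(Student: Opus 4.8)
The plan is to mimic the proof of \cref{prop:dillon64} almost line by line, with the hypothesized signature set playing the role of the explicit elements used there. Write $\widehat{E} = \{\chi_u : u \in U_r\}$ for the set of characters of $E$, fix a signature set $\{A_u : u \in U_r\}$ on $K$ with respect to $E$, and put $B_u = A_u\chi_u$ for each $u \in U_r$. By \cref{lem:sig}, each $B_u$ is a $\{\pm1\}$-valued function on $K$ and
\[
B_uB_v^{(-1)} = \begin{cases} |K|\chi_u & \mbox{if $u=v$}, \\ 0 & \mbox{if $u\ne v$}, \end{cases}
\]
so all of the work local to $K$ has already been packaged into \cref{lem:sig}; what remains is to glue the $B_u$ together over a suitable transversal of $K$ in $G$, which is where \cref{thm:drisko} enters.

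Before \cref{thm:drisko} can be invoked with $H = G/K$, one needs to know that the conjugation action of $G$ on $\widehat{E}$ descends to $G/K$, i.e.\ that $K$ fixes every $\chi_u$ under conjugation. In \cref{prop:dillon64} this is automatic because $K$ is abelian; here I would extract it from the signature set hypothesis itself. Fix a transversal $T$ of $E$ in $K$ on which $A_u$ is supported, so $A_u = \sum_{t\in T} a_t t$ with each $a_t \in \{\pm1\}$. In $A_u\chi_u A_u^{(-1)} = \sum_{t,s\in T} a_ta_s\, t\chi_u s^{-1}$ the term indexed by $(t,s)$ is supported on the coset $E(ts^{-1})$, which meets $E$ precisely when $t=s$, and the $(t,t)$ term is $a_t^2\,t\chi_u t^{-1} = t\chi_u t^{-1}$; equating the parts supported on $E$ with the (entirely $E$-supported) right-hand side of the defining equation $A_u\chi_u A_u^{(-1)} = \tfrac{|K|}{2^r}\chi_u$ therefore gives $\sum_{t\in T} t\chi_u t^{-1} = |T|\,\chi_u$ in $\Z E$. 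Each $t\chi_u t^{-1}$ is again a character of $E$, and distinct characters of $E$ are linearly independent, so this forces $t\chi_u t^{-1} = \chi_u$ for every $t\in T$; since $K = TE$ and $E$ is abelian this holds for every element of $K$, and since it holds for all $u\in U_r$, the conjugation action of $G$ on $\widehat{E}$ indeed factors through $G/K$.

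With this in hand, the argument of \cref{prop:dillon64} goes through. Since $E\cong C_2^r$ is normal in $G$, conjugation fixes $\chi_0 = E$ and permutes the $2^r-1$ index-$2$ subgroups of $E$, so $G/K$ acts on $\widehat{E}$ as a permutation group with $\chi_0$ as a fixed point; as $G/K$ has order $[G:K] = 2^r = |\widehat{E}|$ it is a $2$-group of the right size, and \cref{thm:drisko} produces a bijection $\theta\colon\widehat{E}\to G/K$. Writing $\theta(\chi_u) = g_uK$ gives a transversal $\{g_u : u\in U_r\}$ of $K$ in $G$ with $\{g_u\chi_u g_u^{-1} : u\in U_r\} = \widehat{E}$. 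I would then set $D = \sum_{u\in U_r} g_uB_u$ in $\Z G$; since the cosets $g_uK$ are distinct and each $B_u$ is $\{\pm1\}$-valued and supported on $K$, the function $D$ is $\{\pm1\}$-valued on $G$, and using $(g_vB_v)^{(-1)} = B_v^{(-1)}g_v^{-1}$, \cref{lem:sig}, the bijection just obtained, and \cref{prop:orthogonality}~$(ii)$,
\[
DD^{(-1)} = \sum_{u,v\in U_r} g_uB_uB_v^{(-1)}g_v^{-1} = |K|\sum_{u\in U_r} g_u\chi_ug_u^{-1} = |K|\sum_{u\in U_r}\chi_u = |K|\cdot 2^r = |G|,
\]
whence $D$ corresponds to a Hadamard difference set in $G$ and $G\in{\cal H}$.

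I expect the main obstacle to be the middle step: recognising that the signature set hypothesis by itself forces $E\le Z(K)$, which is exactly what makes $G/K$ the correct $2$-group to feed into \cref{thm:drisko}. Once that is in place, the $\{\pm1\}$-valuedness of $D$, the orthogonality bookkeeping, and the computation of $DD^{(-1)}$ are all routine, being transcriptions of \cref{prop:dillon64}.
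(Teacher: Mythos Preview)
Your proof is correct and follows the paper's approach essentially line by line: apply \cref{thm:drisko} to the conjugation action of $G/K$ on $\widehat{E}$, use the resulting transversal $\{g_u\}$ to form $D = \sum_u g_u B_u$, and compute $DD^{(-1)} = |G|$ via \cref{lem:sig} and \cref{prop:orthogonality}. The one place you go beyond the paper is in checking that this conjugation action really descends to $G/K$: the paper simply asserts $\phi(gK)(\chi_u) = g\chi_u g^{-1}$ is well-defined, whereas you extract $t\chi_u t^{-1} = \chi_u$ for all $t$ in a transversal of $E$ in $K$ by reading off the $E$-supported part of the signature-block identity $A_u\chi_u A_u^{(-1)} = \tfrac{|K|}{2^r}\chi_u$ --- a correct and worthwhile observation that plugs a small gap the paper leaves implicit.
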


\begin{proof}
Let $\widehat{E} = \{\chi_u: u \in U_r\}$ be the set of characters of $E$.
We shall apply \cref{thm:drisko} with $S = \widehat{E}$ and $H = G/K$.
Since $E$ is normal in $G$, and the complements of the subsets of $E$ associated with the characters $\chi_u$ for nonzero~$u$ are exactly the subgroups of $E$ of index~$2$,
\[
g \chi_u g^{-1} \in \widehat{E} \quad \mbox{for all $g \in G$ and $\chi_u \in \widehat{E}$}.
\]
Therefore $G/K$ acts on $\widehat{E}$ as a group of permutations under the conjugation action
\[
\phi(gK)(\chi_u) = g \chi_u g^{-1} \quad \mbox{for all $gK \in G/K$ and $\chi_u \in \widehat{E}$},
\]
and the element $\chi_0 = E$ of $\widehat{E}$ is fixed under~$\phi$.
Apply \cref{thm:drisko} to show that there is a set $\{g_u: u \in U_r\}$ of coset representatives for $K$ in $G$ satisfying 
\begin{equation}
\label{eqn:permute-chiu3}
\{g_u \chi_u g_u^{-1} : u \in U_r\} = \{\chi_u: u \in U_r\}.
\end{equation}

By assumption, there is a signature set $\{A_u: u \in U_r\}$ on $K$ with respect to~$E$.
Let $B_u = A_u \chi_u$ for each $u \in U_r$, and use the coset representatives~$g_u$ to define
\begin{equation}
\label{eqn:Dast2}
D = \sum_{u \in U_r} g_u B_u \quad \mbox{in $\Z G$},
\end{equation}
which is a $\{\pm 1\}$-valued function on $G$ by \cref{lem:sig}~$(i)$.
We calculate in $\Z G$ that
\begin{align*}
DD^{(-1)} 
  &= \sum_{u,v \in U_r} g_u B_u B_v^{(-1)}g_v^{-1}\\
  &= |K| \sum_{u \in U_r} g_u \chi_u g_u^{-1}
\end{align*}
by \cref{lem:sig}~$(ii)$.
Then from \eqref{eqn:permute-chiu3} and \cref{prop:orthogonality}~$(ii)$ we have
\[
DD^{(-1)} = |K| \sum_{u \in U_r} \chi_u = 2^r |K| = |G|.
\]
Therefore $D$ corresponds to a Hadamard difference set in~$G$.
\end{proof}

The motivating examples of \cref{sec:intro} both occur as special cases of \cref{thm:prehadamard}.
\cref{cor:drisko} arises by taking $|G| = 2^{2d+2}$ and $r=d+1$, with $E = K \cong C_2^{d+1}$ normal in~$G$, and using a trivial signature set on $K$ with respect to itself.
\cref{prop:dillon64} arises by taking $|G|=64$ and $r=2$, with $K = \langle X, Y \rangle \cong C_4^2$ normal in $G$ and $E = \langle X^2, Y^2 \rangle$ (the unique subgroup of $K$ isomorphic to $C_2^2$), and 
using the nontrivial signature set $\{A_{ij} : (i,j) \in U_2\}$ on $K$ with respect to~$E$ specified in~\eqref{eqn:Auturyn}.

\cref{thm:prehadamard} establishes the existence of a difference set in $G$ by reference to \cref{thm:drisko}, whose proof as given in \cite{drisko} is not constructive. To construct such a difference set explicitly, one must therefore determine suitable coset representatives for the normal subgroup $K$ in $G$ satisfying \eqref{eqn:permute-chiu3}. This determination currently requires a computer search that can be computationally expensive, particularly for groups of order 256 (see \cref{sec:verification}).

We point out a connection to the study of bent functions (see \cite{carmes} for a survey), which are equivalent to Hadamard difference sets in elementary abelian $2$-groups. 
Take $G = E_{d+1}^2$ and $E = K = E_{d+1}$ in \cref{thm:prehadamard}, and 
let $\{A_u: u \in U_r\}$ be a trivial signature set on $K$ with respect to~$E$ for which each $A_u$ is chosen arbitrarily in~$\{\pm 1\}$.
In this case, the choice of coset representatives $\{g_u: u \in U_{d+1}\}$ for $K$ in $G$ used to construct the difference set $D$ in the proof of \cref{thm:prehadamard} is arbitrary.
Let $a$ be the Boolean function on~$U_{d+1}$ defined by
\[
A_u = (-1)^{a(u)} \quad \mbox{for each $u \in U_{d+1}$}.
\]
Then the $\{0,1\}$-valued characteristic function of $D$ is the Maiorana-McFarland bent function $f(u,v) = \pi(u) \cdot v + a(u)$, where $\pi$ is an arbitrary permutation of~$U_{d+1}$.

In view of \cref{thm:prehadamard}, our objective in \cref{sec:abelian} is to construct a signature set on a large class of groups~$K$ 
(which we take to be abelian in \cref{sec:abelian}, and nonabelian in \cref{sec:further}).
In the remainder of this section, we introduce some preparatory results about signature sets.

We firstly show that a group automorphism of $K$ fixing $E$ maps a signature block on $K$ to another signature block on~$K$.
\begin{prop} 
\label{prop:Aaut}
Let $K$ be a group containing a normal subgroup $E \cong C_2^r$, and let $\sigma$ be a group automorphism of $K$ which fixes~$E$. Suppose that $A_u$ is a signature block on $K$ with respect to the character $\chi_u$ of~$E$, for some $u \in U_r$. Then $\sigma$ induces a map on $\Z K$ under which $\sigma(A_u)$ is a signature block on $K$ with respect to the character $\sigma(\chi_u)$ of~$E$.
\end{prop}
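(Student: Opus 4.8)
The plan is to verify directly that $\sigma(A_u)$ satisfies the defining equation of a signature block with respect to the character $\sigma(\chi_u)$, using that $\sigma$ extends linearly to a ring automorphism of $\Z K$. First I would note that any group automorphism $\sigma$ of $K$ extends to a ring automorphism of $\Z K$ by $\sigma\big(\sum_{k} c_k k\big) = \sum_k c_k \sigma(k)$; in particular $\sigma$ is multiplicative on $\Z K$ and commutes with the involution $(-1)$, since $\sigma(k)^{-1} = \sigma(k^{-1})$ for each $k \in K$. So $\sigma(AB) = \sigma(A)\sigma(B)$ and $\sigma(A^{(-1)}) = \sigma(A)^{(-1)}$ for all $A, B \in \Z K$, and $\sigma$ fixes every integer multiple of $1_K$.

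Next I would record the two facts that make the argument go through. Since $\sigma$ fixes $E$ setwise and acts as a group automorphism of $E \cong C_2^r$, it permutes the index-$2$ subgroups of $E$, hence permutes their $\{\pm 1\}$-valued indicator functions: that is, $\sigma(\chi_u) = \chi_{u'}$ for some $u' \in U_r$ determined by $\sigma$, so $\sigma(\chi_u)$ is again a character of $E$ (this also handles $u=0$, where $\sigma(\chi_0) = \sigma(E) = E = \chi_0$). Second, $\sigma$ maps a set of coset representatives for $E$ in $K$ to another such set: if $T$ is a transversal for $E$ in $K$ then, because $\sigma$ is bijective and $\sigma(E) = E$, the set $\sigma(T)$ meets each coset $\sigma(t)E = \sigma(tE)$ exactly once. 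Hence $\sigma(A_u)$, which is obtained from $A_u$ by relabelling coset representatives via $\sigma$ and keeping the same $\pm 1$ coefficients, is a $\{\pm 1\}$-valued function on the transversal $\sigma(T)$ for $E$ in $K$.

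With these in hand the main computation is short. Apply $\sigma$ to the signature-block identity $A_u \chi_u A_u^{(-1)} = \tfrac{|K|}{2^r}\chi_u$ and use multiplicativity, compatibility with $(-1)$, and fixation of scalars:
\[
\sigma(A_u)\,\sigma(\chi_u)\,\sigma(A_u)^{(-1)} = \sigma\big(A_u \chi_u A_u^{(-1)}\big) = \sigma\big(\tfrac{|K|}{2^r}\chi_u\big) = \tfrac{|K|}{2^r}\,\sigma(\chi_u).
\]
This is exactly the condition in \cref{defn:sig} for $\sigma(A_u)$ to be a signature block on $K$ with respect to the character $\sigma(\chi_u)$, which completes the proof.

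I do not anticipate a genuine obstacle here; the result is essentially functoriality of the signature-block condition under the ambient automorphism. The one point requiring a little care — rather than difficulty — is the bookkeeping that $\sigma$ really does send a transversal for $E$ in $K$ to a transversal, so that the object $\sigma(A_u)$ fits the definition of a signature block (a function on coset representatives) and not merely an abstract element of $\Z K$; but this follows immediately from bijectivity of $\sigma$ together with $\sigma(E)=E$.
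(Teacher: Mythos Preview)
Your proof is correct and follows essentially the same approach as the paper's: verify that $\sigma$ sends a transversal for $E$ in $K$ to another transversal (so $\sigma(A_u)$ has the right support), and then apply $\sigma$ to the defining identity $A_u\chi_u A_u^{(-1)}=\tfrac{|K|}{2^r}\chi_u$ using multiplicativity and compatibility with the involution. The paper's proof is terser, but the content is identical.
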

\begin{proof}
The signature block $A_u$ is $\{\pm 1\}$-valued on a set of coset representatives for $E$ in~$K$. Since the automorphism $\sigma$ fixes $E$, the images of these coset representatives under $\sigma$ are also a set of coset representatives for $E$ in~$K$ on which $\sigma(A_u)$ is $\{\pm 1\}$-valued. Furthermore
\begin{align*}
\sigma(A_u) \sigma(\chi_u) \sigma(A_u)^{(-1)}
 &= \sigma (A_u \chi_u A_u^{(-1)}) \\
 &= \tfrac{|K|}{2^r} \sigma (\chi_u),
\end{align*}
so $\sigma(A_u)$ is a signature block on $K$ with respect to the character $\sigma(\chi_u)$ of~$E$.
\end{proof}

We next give a simple product construction for signature sets.
\begin{prop}
\label{prop:sig-prod}
Suppose there exists a signature set on a group $K_r$ with respect to a normal subgroup $E_r \cong C_2^r$, and there exists a signature set on a group $K_s$ with respect to a normal subgroup $E_s \cong C_2^s$. Then there exists a signature set on $K_r \times K_s$ with respect to $E_r \times E_s$.
\end{prop}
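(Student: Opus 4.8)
The plan is to build a signature set on $K_r \times K_s$ by taking group-ring products of signature blocks from the two factors. Write $\{A_u : u \in U_r\}$ for the given signature set on $K_r$ with respect to $E_r$, and $\{A'_w : w \in U_s\}$ for the given signature set on $K_s$ with respect to $E_s$. I would identify $E_r \times E_s$ with $C_2^{r+s}$ so that $U_{r+s} = U_r \times U_s$, and recall from~\eqref{eqn:chiu} that the character of $E_r \times E_s$ indexed by $(u,w)$ factors as $\chi_{(u,w)} = \chi_u \chi_w$, where $\chi_u$ is a character of $E_r$ and $\chi_w$ a character of $E_s$, both regarded inside $\Z(K_r \times K_s)$. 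For each $(u,w) \in U_r \times U_s$, I would then propose the candidate signature block
\[
A_{(u,w)} := A_u A'_w \quad \mbox{in $\Z(K_r \times K_s)$}.
\]

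First I would check that $A_{(u,w)}$ is $\{\pm 1\}$-valued on a set of coset representatives for $E_r \times E_s$ in $K_r \times K_s$: if $\mathcal{T}_r$ is the transversal of $E_r$ in $K_r$ on which $A_u$ is supported and $\mathcal{T}_s$ the transversal of $E_s$ in $K_s$ on which $A'_w$ is supported, then $\mathcal{T}_r \times \mathcal{T}_s$ is a transversal of $E_r \times E_s$ in $K_r \times K_s$, and $A_u A'_w$ takes the value $A_u(t)\,A'_w(t') \in \{\pm 1\}$ at $(t,t')$. Next I would verify the defining identity. Since $\Z(K_r \times K_s) \cong \Z K_r \otimes_\Z \Z K_s$, elements supported on $K_r \times 1$ commute with elements supported on $1 \times K_s$, and $(AB)^{(-1)} = A^{(-1)} B^{(-1)}$ for such $A \in \Z K_r$, $B \in \Z K_s$. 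Hence
\begin{align*}
A_{(u,w)} \, \chi_{(u,w)} \, A_{(u,w)}^{(-1)}
 &= (A_u A'_w)(\chi_u \chi_w)(A_u A'_w)^{(-1)}
  = \big(A_u \chi_u A_u^{(-1)}\big)\big(A'_w \chi_w (A'_w)^{(-1)}\big) \\
 &= \tfrac{|K_r|}{2^r}\chi_u \cdot \tfrac{|K_s|}{2^s}\chi_w
  = \tfrac{|K_r \times K_s|}{2^{r+s}}\,\chi_{(u,w)},
\end{align*}
using \cref{defn:sig} for each factor, which is exactly the signature-block condition for $E_r \times E_s \cong C_2^{r+s}$ (note $E_r \times E_s$ is normal in $K_r \times K_s$ because $E_r \trianglelefteq K_r$ and $E_s \trianglelefteq K_s$). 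Taking the multiset $\{A_{(u,w)} : (u,w) \in U_r \times U_s\}$ then gives the required signature set.

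I do not expect a serious obstacle here; the argument is essentially bookkeeping in a tensor product of group rings. The only points needing care are (i) making the identification $U_{r+s} = U_r \times U_s$ and the factorization $\chi_{(u,w)} = \chi_u \chi_w$ explicit, so that the "character" indexing is consistent with \cref{defn:sig}, and (ii) justifying that products of transversals are transversals and that the factors commute and interact correctly with the $(-1)$ operation — all of which are routine but worth stating cleanly.
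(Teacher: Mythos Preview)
Your proposal is correct and follows essentially the same approach as the paper: both take the product $A_u A'_w$ of signature blocks from the two factors, verify it is $\{\pm1\}$-valued on a transversal of $E_r\times E_s$, and then check the defining identity by separating the computation into the two commuting factors. The paper's presentation is slightly less formal about the tensor-product bookkeeping, but the argument is the same.
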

\begin{proof}
Let $\{A_u: u \in U_r\}$ be a signature set on $K_r$ with respect to $E_r$, and 
let $\{\alpha_v: v \in U_s\}$ be a signature set on $K_s$ with respect to~$E_s$.
We claim that $\{A_u \alpha_v: u \in U_r, v \in U_s\}$ is a signature set on $K_r \times K_s$ with respect to its normal subgroup~$E_r \times E_s$.

The function $A_u \alpha_v$ is $\{\pm 1\}$-valued on a set of coset representatives for $E_r \times E_s$ in $K_r \times K_s$, because
$A_u$ is $\{\pm 1\}$-valued on a set of coset representatives for $E_r$ in~$K_r$ and
$\alpha_v$ is $\{\pm 1\}$-valued on a set of coset representatives for $E_s$ in~$K_s$.

Let $\{\chi_u : u \in U_r\}$ be the set of characters of~$E_r$, and 
let $\{\psi_v : v \in U_s\}$ be the set of characters of~$E_s$.
The set of characters of $E_r \times E_s$ is $\{\chi_u \psi_v: u \in U_r, v \in U_s\}$, and for each $u \in U_r$ and $v \in U_s$ we have
\begin{align*}
(A_u \alpha_v) (\chi_u \psi_v) (A_u \alpha_v)^{(-1)}
 &= A_u \chi_u \big ( \alpha_v \psi_v \alpha_v^{(-1)} \big)  A_u^{(-1)} \\
 &= A_u \chi_u \tfrac{|K_s|}{2^s} \psi_v A_u^{(-1)} \\
 &= \big( A_u \chi_u A_u^{(-1)} \big) \tfrac{|K_s|}{2^s} \psi_v \\
 &= \tfrac{|K_r|}{2^r} \chi_u \, \tfrac{|K_s|}{2^s} \psi_v \\
 &= \tfrac{|K_r \times K_s|}{2^{r+s}} (\chi_u \psi_v).
\end{align*}
\end{proof}
\noindent
To illustrate the previously unrecognized power of the signature set approach, note that in 2013 Applebaum~\cite{applebaum} used computer search to show that 643 of the 714 groups of order 256, whose membership in ${\cal H}$ was then undetermined, belong to ${\cal H}$. Since all 643 of these groups contain a normal subgroup isomorphic to $C_4^2 \times C_2$, this result follows directly from \cref{thm:prehadamard} simply by exhibiting a signature set on $C_4^2 \times C_2$ with respect to its unique subgroup isomorphic to $C_2^3$. This can be constructed by using \cref{prop:sig-prod} to take the product of a signature set on $C_4^2$ with respect to its unique subgroup isomorphic to $C_2^2$ (see \cref{ex:C82}) with a trivial signature set on $C_2$ with respect to itself.

Finally, we derive constraints on a signature set in terms of $|K|$ and~$|E|$. We will use these constraints to show how \cref{thm:prehadamard} can be viewed as refining a construction method for difference sets introduced by Davis and Jedwab~\cite{unifying}, by interpreting a signature set on an abelian group as a special kind of covering extended building set. 

\begin{lemma}
\label{lem:-1} 
Let $K$ be a group containing a normal subgroup $E \cong C_2^r$, and suppose that 
$\{A_u : u \in U_r\}$ is a signature set on $K$ with respect to~$E$.
Let $\{\chi_u: u \in U_r\}$ be the set of characters of $E$, and let $B_u = A_u \chi_u$ for each $u \in U_r$. 
Then the number of times the $\{\pm 1\}$-valued function $B_u$ on $K$ takes the value $-1$ is 
\[
\begin{cases}
 \frac{1}{2}|K| 			& \mbox{if $u \ne 0$}, \\[1ex]
 \frac{1}{2}|K| \pm \sqrt{2^{r-2}|K|}	& \mbox{if $u = 0$}.
\end{cases}
\]
\end{lemma}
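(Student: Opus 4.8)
The plan is to count the number of $-1$ values of $B_u$ by applying the principal character of $K$ (equivalently, summing all coefficients) to the group ring element $B_u$. If $B_u$ takes the value $-1$ exactly $t$ times among the $|K|$ coset-coefficient positions, then summing the coefficients of $B_u$ gives $|K| - 2t$, so it suffices to determine $\sum_{k \in K} B_u(k)$, which we can obtain by evaluating the trivial character $\chi_0^K$ (the sum-of-coefficients functional on $\Z K$) at $B_u = A_u \chi_u$.

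First I would recall that $\chi_u = \sum_{e \in E} \chi_u(e)\, e$ as an element of $\Z E \subseteq \Z K$, and that by \cref{prop:orthogonality}~$(iii)$ we have $\sum_{e \in E} \chi_u(e) = 2^r$ if $u = 0$ and $0$ if $u \ne 0$. For $u \ne 0$, write $A_u = \sum_i \epsilon_i t_i$ where the $t_i$ run over a transversal of $E$ in $K$ and $\epsilon_i \in \{\pm 1\}$; then $B_u = A_u \chi_u = \sum_i \epsilon_i\, t_i \chi_u$, and the sum of all coefficients of $t_i \chi_u$ equals $\sum_{e \in E}\chi_u(e) = 0$ (since left multiplication by $t_i$ merely permutes group elements), so the sum of all coefficients of $B_u$ is $0$ and hence $t = \tfrac12 |K|$. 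For $u = 0$, I would instead exploit the signature-block identity directly: apply the sum-of-coefficients functional $\varepsilon\colon \Z K \to \Z$ (a ring homomorphism) to $B_0 B_0^{(-1)} = |K|\chi_0 = |K| E$ from \cref{lem:sig}~$(ii)$. Since $\varepsilon(B_0^{(-1)}) = \varepsilon(B_0)$ and $\varepsilon(E) = 2^r$, this yields $\varepsilon(B_0)^2 = 2^r |K|$, so $\varepsilon(B_0) = \pm\sqrt{2^r |K|}$. Writing $\varepsilon(B_0) = |K| - 2t$ gives $t = \tfrac12 |K| \mp \tfrac12\sqrt{2^r|K|} = \tfrac12 |K| \mp \sqrt{2^{r-2}|K|}$, which is the claimed value (the $\pm$ absorbing the sign).

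The only mild subtlety, which I would address explicitly, is justifying that $\varepsilon$ extends to a ring homomorphism $\Z K \to \Z$ with $\varepsilon(A^{(-1)}) = \varepsilon(A)$; this is immediate since $\varepsilon$ is induced by the trivial homomorphism $K \to \{1\}$ (equivalently, evaluation of the trivial character of $K$), and $g \mapsto g^{-1}$ fixes the trivial character. I expect the main obstacle to be purely bookkeeping: being careful that $A_u \chi_u$ genuinely has $|K|$ well-defined integer coefficients (which follows from \cref{lem:sig}~$(i)$, since $B_u$ is $\{\pm1\}$-valued on all of $K$), so that "number of $-1$'s" is well-posed and the identity "sum of coefficients $= |K| - 2(\text{number of } -1\text{'s})$" holds. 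No deeper difficulty arises; the signature-set hypothesis does all the work through \cref{lem:sig}~$(ii)$ and \cref{prop:orthogonality}~$(iii)$.
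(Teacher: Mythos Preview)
Your argument is correct. For $u \ne 0$ your counting via the sum-of-coefficients is essentially the paper's argument, just phrased in terms of the augmentation map rather than a direct count of $-1$'s within each coset.

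For $u = 0$, however, your route is genuinely different and more economical. The paper embeds $K$ into the group $G = J \times K$ for an arbitrary group $J$ of order $2^r$, invokes \cref{thm:prehadamard} to produce a Hadamard difference set $D = g_0 B_0 + \sum_{u \ne 0} g_u B_u$ in $G$, and then reads off the count for $B_0$ by comparing $|D|$ against the Hadamard parameter formula $|D| = 2N^2 - N$ with $4N^2 = 2^r|K|$ together with the already-established Case~1 counts for the $B_u$ with $u \ne 0$. You instead apply the augmentation homomorphism $\varepsilon$ directly to the identity $B_0 B_0^{(-1)} = |K|\chi_0$ from \cref{lem:sig}~$(ii)$, obtaining $\varepsilon(B_0)^2 = 2^r|K|$ in one line. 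Your approach avoids the detour through \cref{thm:prehadamard} and the Hadamard parametrization, and does not rely on Case~1 having been established first; the paper's approach, on the other hand, makes the connection to difference-set parameters explicit, which motivates the subsequent interpretation of the $B_u$ as a covering extended building set.
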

\begin{proof}
By \cref{lem:sig}~$(i)$, each $B_u$ is $\{\pm 1\}$-valued on~$K$.

\begin{description}
\item[Case 1: $u \ne 0$.]
By \cref{prop:orthogonality}~$(iii)$, the number of times the $\{\pm 1\}$-valued function $\chi_u$ on $E$ takes the value $-1$ is $\frac{1}{2}|E|$. Since $A_u$ is a $\{\pm 1\}$-valued function on a set of coset representatives for $E$ in $K$, the number of times $B_u = A_u \chi_u$ takes the value $-1$ is $\frac{1}{2}|E| |K:E| = \frac{1}{2}|K|$.

\item[Case 2: $u = 0$.]
Let $c \in \{0, 1, \dots, |K|\}$ be the number of times that $B_0$ takes the value~$-1$,
and let $J$ be a group of order~$2^{r}$.
By \cref{thm:prehadamard}, the group $G = J \times K$ contains a Hadamard difference set $D$ whose corresponding $\{\pm 1\}$-valued function is defined in \eqref{eqn:Dast2} as
\begin{equation}
\label{eqn:Dast3}
D = g_0 B_0 + \sum_{u \ne 0} g_u B_u 
\end{equation}
for some choice of coset representatives $\{g_u: u \in U_r\}$ for $K$ in~$G$. 
By \eqref{eqn:had_params}, the parameters of the difference set $D$ satisfy
\[
|G| = 2^r |K| = 4N^2 \quad \mbox{and} \quad |D| = 2N^2-N
\]
for some integer~$N$, and eliminating $N$ gives
\[
|D| = 2^{r-1} |K| \pm \sqrt{2^{r-2}|K|}.
\]
But $|D|$ equals the number of times that the function $D$ takes the value~$-1$, which from \eqref{eqn:Dast3} and the result for Case~1 gives
\[
|D| = c + (2^r-1)\tfrac{1}{2}|K|.
\]
Equate the two expressions for $|D|$ to give
\[
c = \tfrac{1}{2}|K| \pm \sqrt{2^{r-2}|K|}.
\]
\end{description}
\end{proof}
\noindent
Note from \cref{ex:C82} that the number of times the function $A_u$ takes the value~$-1$ is not determined for $u \ne 0$ solely from the hypotheses of \cref{lem:-1}. However, for $u=0$ this number is determined as 
$\frac{1}{2^r}\Big( \frac{|K|}{2} \pm \sqrt{2^{r-2}|K|} \Big)$ by \cref{lem:-1} and the relation $B_0 = A_0 \chi_0$, because the $\{\pm 1\}$-valued function $\chi_0 = E$ takes the value $1$ exactly $2^r$ times.

We can now interpret \cref{thm:prehadamard} in the framework of~\cite{unifying} for the case that $K$ is abelian. Suppose $\{A_u : u \in U_r\}$ is a signature set on an abelian group $K$ with respect to $E =\langle x_1,x_2,\dots,x_r \rangle \cong C_2^r$, and let $B_u = A_u \chi_u$ for each $u \in U_r$.  
In the language of \cite{unifying}, we claim that the subsets $\big\{\frac{1}{2}(K-B_u) : u \in U_r\big\}$ of $K$ then form a $(\frac{|K|}{2}, \sqrt{2^{r-2}|K|}, 2^r, \pm)$ covering extended building set on~$K$ (satisfying the key additional constraint that $B_u = A_u \chi_u$ for each~$u$).
To prove the claim, we require firstly that 
\[
\big|\tfrac{1}{2}(K-B_u)\big| = 
\begin{cases}
 \frac{1}{2}|K| \pm \sqrt{2^{r-2}|K|}	& \mbox{for a single value of $u$}, \\[1ex]
 \frac{1}{2}|K| 			& \mbox{for all other values of $u$}.
\end{cases}
\]
This is given by \cref{lem:-1}, because $\big|\tfrac{1}{2}(K-B_u)\big|$ is the number of times that the $\{\pm 1\}$-valued function $B_u$ takes the value~$-1$.
To complete the proof of the claim, we also require that, for each nonprincipal character $\psi$ of the abelian group $K$ (namely a nontrivial homomorphism from $K$ to the complex roots of unity),
\[
\Big|\psi\big(\tfrac{1}{2}(K-B_u)\big)\Big| = 
  \begin{cases} 	
	\sqrt{2^{r-2}|K|} 	& \mbox{for a single value of $u$ that depends on $\psi$}, \\
	0 			& \mbox{for all other values of $u$}.
  \end{cases}
\]
This is given by applying $\psi$ to the case $u=v$ of \cref{lem:sig}~$(ii)$ to obtain
$|\psi(B_u)|^2 = |K| \psi(\chi_u)$, and noting that $\psi$ maps each $x_i$ to $\{1,-1\}$ so that from \eqref{eqn:chiu} we have
\[
\psi(\chi_u) = 
  \begin{cases} 	
	2^r 			& \mbox{for a single value of $u$ that depends on $\psi$}, \\
	0 			& \mbox{for all other values of $u$}.
  \end{cases}
\]

%%%%%%%%%%%%%%%%%%%%%%%%%%%%%%%%%%%%%%%%%%%%%%%%%%
\section{Proof of Main Result}
%%%%%%%%%%%%%%%%%%%%%%%%%%%%%%%%%%%%%%%%%%%%%%%%%%
\label{sec:abelian}

In this section we prove our main result, \cref{thm:main}, as a corollary of \cref{thm:abelian-sigset} below.
For an abelian $2$-group $K$ of rank~$r$, we shall abbreviate 
``a signature set on $K$ with respect to its unique subgroup isomorphic to $C_2^r$\,'' as 
``a signature set on~$K$''. 

\begin{thm} 
\label{thm:abelian-sigset}
Let $d$ and $r$ be integers satisfying $d \ge 1$ and $2 \le r \le d+1$.
Let ${\cal K}_{d,r}$ be the set of all abelian groups of order $2^{2d-r+2}$, rank~$r$, and exponent at most~$2^{d-r+2}$.
Then there exists a signature set on each $K_{d,r} \in {\cal K}_{d,r}$.
\end{thm}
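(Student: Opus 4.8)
The plan is to prove \cref{thm:abelian-sigset} by induction on $d$, reducing a group in ${\cal K}_{d,r}$ to one in ${\cal K}_{d-1,r}$. Write $m = d-r+1$, so $m\ge 0$, $|K_{d,r}| = 2^{2m+r}$, $|K_{d,r}|/2^r = 2^{2m}$, and the exponent hypothesis says every cyclic direct factor of $K_{d,r}$ has order at most $2^{m+1}$. When $m=0$ we have $K_{d,r}\cong C_2^r = E$, and the trivial signature set $A_u = 1$ ($u\in U_r$) of \cref{defn:sig} works; so assume $m\ge 1$. Write $K = K_{d,r} = \langle x_1\rangle\times\cdots\times\langle x_r\rangle$ with $|x_i| = 2^{a_i}$, $a_1\ge\cdots\ge a_r\ge 1$, and $\sum_i(a_i-1) = 2m$. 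Since $a_1-1\le m < 2m$, at least two of the $a_i$ exceed $1$, so $a_1,a_2\ge 2$; also $d = m+r-1\ge r$, so ${\cal K}_{d-1,r}$ is a legitimate family to which induction applies.

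First I would pass to the index-$4$ subgroup $K'$ obtained by halving the two cyclic factors of largest order,
\[
K' = \langle x_1^2\rangle\times\langle x_2^2\rangle\times\langle x_3\rangle\times\cdots\times\langle x_r\rangle,
\]
and check two facts. First, because the unique subgroup of order $2$ in a cyclic $2$-group coincides with that of its index-$2$ subgroup, $K'$ has the same socle as $K$: that is, $K'[2] = K[2] = E\cong C_2^r$, so a signature set ``on $K'$'' in the abbreviated sense of \cref{thm:abelian-sigset} is precisely a signature set on $K'$ with respect to $E$. Second, $K'\in{\cal K}_{d-1,r}$: it has rank $r$ and order $2^{2(m-1)+r}$, and its exponent is at most $2^m$, since the two halved factors have order at most $2^{a_1-1}\le 2^m$ while every unhalved factor has order at most $2^{a_3}\le 2^m$ --- here $a_1=a_2=a_3=m+1$ is impossible, as it would force $\sum_i(a_i-1)\ge 3m > 2m$. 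By the inductive hypothesis there is a signature set $\{A'_w : w\in U_r\}$ on $K'$ with respect to $E$; writing $B'_w = A'_w\chi_w$, \cref{lem:sig} gives $B'_w(B'_{w'})^{(-1)} = 2^{2(m-1)}\chi_w$ if $w=w'$ and $0$ otherwise.

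The core of the argument --- and the step I expect to be the main obstacle --- is the \emph{lift}: building a signature set $\{A_u : u\in U_r\}$ on $K$ with respect to $E$ out of $\{A'_w\}$. A transversal of $E$ in $K$ is given by the products $ts$, with $t$ ranging over the four coset representatives $\{1,x_1,x_2,x_1x_2\}$ of $K'$ in $K$ and $s$ over a transversal of $E$ in $K'$, so the natural ansatz is
\[
A_u = \sum_{t\in\{1,\,x_1,\,x_2,\,x_1x_2\}} \eta_{u,t}\; t\; \sigma_{u,t}\!\big(A'_{w(u,t)}\big),
\]
where each $\sigma_{u,t}$ is an automorphism of $K'$ fixing $E$ (so that by \cref{prop:Aaut} the term $\sigma_{u,t}(A'_{w(u,t)})$ is a signature block with respect to $\sigma_{u,t}(\chi_{w(u,t)})$), each $\eta_{u,t}\in\{\pm1\}$, and the index $w(u,t)\in U_r$ is chosen so that $\sigma_{u,t}(\chi_{w(u,t)}) = \chi_u$. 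This generalizes the $C_4^2$ signature set of \eqref{eqn:Auturyn} and \cref{ex:C82}, where $K' = C_2^2$, every $A'_w = 1$, every $\sigma_{u,t}$ is the identity, and the signs $\eta_{u,\cdot}$ realize the Turyn pattern $(1,1,1,-1)$ for $u\ne 11$ and $(1,1,1,1)$ for $u=11$. With this ansatz, $A_u$ is $\{\pm1\}$-valued on a transversal of $E$ in $K$, and on expanding $A_u\chi_u A_u^{(-1)}$ the ``diagonal'' terms ($t=t'$) contribute four copies of $\sigma_{u,t}\!\big(A'_{w(u,t)}\chi_{w(u,t)}(A'_{w(u,t)})^{(-1)}\big) = 2^{2(m-1)}\chi_u$, totalling $2^{2m}\chi_u$ as required. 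The substantive work is to choose the $\sigma_{u,t}$, the $w(u,t)$, and the $\eta_{u,t}$ consistently across all $2^r$ values of $u$ --- with correct behaviour of the distinguished character $\chi_0$ --- so that the ``off-diagonal'' terms ($t\ne t'$) cancel; this uses \cref{prop:orthogonality}~$(i)$ together with the structure $B'_w = A'_w\chi_w$, and generalizes the direct verification of \eqref{eqn:Aupropturyn}. Completing this closes the induction and proves the theorem; as noted in the text, taking $r=2$ and invoking the Turyn exponent bound of \cref{cor:tb} then recovers Kraemer's \cref{thm:kraemer}. The product construction of \cref{prop:sig-prod} can be kept in reserve to shorten the case analysis --- it instantly handles any $K$ with a direct $C_2$ factor, and handles $C_4^{2k}$ via \cref{ex:C82} --- but it does not by itself reach the high-exponent groups $C_{2^{m+1}}^2$, which is exactly the role of the lift.
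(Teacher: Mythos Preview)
Your inductive skeleton matches the paper's: reduce $K\in{\cal K}_{d,r}$ to a group of index~$4$ in ${\cal K}_{d-1,r}$ by shrinking the two largest cyclic factors. Your verification that this smaller group has the right order, rank, and exponent is correct (and your observation that $a_3\le m$ forces the exponent bound is exactly what the paper uses). You also uniformly handle the case $a_r=1$, whereas the paper splits it off via \cref{prop:sig-prod}; that is a genuine simplification of the bookkeeping.

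The gap is in the lift itself, and it is not merely that you have left details unwritten: the mechanism you propose is structurally different from the paper's and does not appear to close. In your ansatz $A_u=\sum_t \eta_{u,t}\,t\,\sigma_{u,t}(A'_{w(u,t)})$ with $\sigma_{u,t}(\chi_{w(u,t)})=\chi_u$, the case $u=0$ forces every $w(0,t)=0$ (automorphisms fix the principal character), so all four pieces are built from $A'_0$ and \cref{prop:orthogonality}\,(i) gives you nothing for the cross terms. Concretely, for $K=C_8^2$ with $K'=\langle x^2,y^2\rangle$ and $A'_0=1+x^2+y^2-x^2y^2$, taking all $\sigma_t=\mathrm{id}$ and Turyn signs yields $A_0A_0^{(-1)}\chi_0=\big(4-x^{-1}y^{-1}(1-x^2)(1-y^2)\big)\cdot 4\chi_0\neq 16\chi_0$, since $(1-x^2)(1+x^4)\neq 0$ in $C_8$. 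More tellingly, if you decompose the paper's actual block $A^3_{00}$ (Example~\ref{ex:rank2}) along the cosets $\{1,x,y,xy\}$ of $K'$, the four $K'$-pieces you obtain are $(1\pm x^2)(1\pm y^2)$, which are \emph{not} signature blocks on~$K'$ with respect to~$\chi_{00}$; so the paper's lift does not fit your ansatz at all.

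What the paper actually does is a polynomial substitution that does \emph{not} land in the subgroup $K'$: it evaluates the inductive blocks $A_{ijv}(X,Y)$ at $(x,y^2)$, $(x^2,y)$, or $(x^2,x^{2^{a_1-a_2}}y)$ depending on $(i,j)$, and premultiplies by factors $(1\pm x^{2^{a_1-2}})$ or $(1\pm y^{2^{a_2-2}})$. The off-diagonal cancellation then comes from the identity
\[
(1+(-1)^i x^{2^{a_1-2}})(1+x^{2^{a_1-1}})(1+(-1)^k x^{-2^{a_1-2}})=0\quad(i\neq k),
\]
which is orthogonality for characters of a \emph{larger} $C_2$-quotient than $E$ provides---precisely the structure your subgroup-with-same-socle setup discards. (Minor aside: the constant in your $B'_w(B'_{w'})^{(-1)}$ should be $|K'|=2^{2(m-1)+r}$, not $2^{2(m-1)}$.)
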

\noindent
Note in \cref{thm:abelian-sigset} that if $E$ is the unique subgroup of $K_{d,r} \in {\cal K}_{d,r}$ isomorphic to $C_2^r$, then $E$ is normal in~$G$. We may therefore apply \cref{thm:prehadamard} to obtain \cref{thm:main} as a corollary of \cref{thm:abelian-sigset}.

We shall prove \cref{thm:abelian-sigset} using a recursive construction for signature sets on abelian $2$-groups. To illustrate the main ideas, we begin with a proof of the special case $r=2$. 

\begin{thm}[Rank~$2$ case of~\cref{thm:abelian-sigset}]
\label{thm:rank2}
Let $d$ be a non-negative integer. 
Then there exists a signature set on $K_d = C_{2^d}^2$. 
\end{thm}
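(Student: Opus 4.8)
The plan is to prove \cref{thm:rank2} by induction on $d$. The cases $d=0$ and $d=1$ are immediate: for $d=0$ the group $C_1^2$ is trivial, and for $d=1$ the group $K_1=C_2^2$ coincides with its own unique subgroup isomorphic to $C_2^2$, so the trivial signature set $A_u=1$ ($u\in U_2$) works because then $A_u\chi_uA_u^{(-1)}=\chi_u=\tfrac14|K_1|\chi_u$. For the inductive step, fix $d\ge2$, write $K_d=C_{2^d}^2=\langle x,y\rangle$, and identify $C_{2^{d-1}}^2$ with the subgroup $H=\langle x^2,y^2\rangle$. The key observation is that $H$ and $K_d$ have the same unique subgroup $E=\langle x^{2^{d-1}},y^{2^{d-1}}\rangle\cong C_2^2$, so the four characters $\chi_u$ of $E$ are simultaneously the relevant characters for both groups, and the inductive hypothesis supplies a signature set $\{\alpha_u:u\in U_2\}$ on $H$ with $\alpha_u\chi_u\alpha_u^{(-1)}=\tfrac14|H|\chi_u=2^{2d-4}\chi_u$ in $\Z H\subseteq\Z K_d$. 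Composing a transversal of $E$ in $H$ with the transversal $\{1,x,y,xy\}$ of $H$ in $K_d$ yields a transversal of $E$ in $K_d$, and the task is to produce $\{\pm1\}$-valued signature blocks $A_u$ on this transversal with $A_u\chi_uA_u^{(-1)}=\tfrac14|K_d|\chi_u=2^{2d-2}\chi_u$; equivalently, applying the characters $\psi$ of the abelian group $K_d$, one needs $|\psi(A_u)|^2=2^{2d-2}$ for every $\psi$ whose restriction to $E$ equals $\chi_u$.

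The obvious first attempt is to attach to each $\alpha_u$ a $\{\pm1\}$-valued ``lifting factor'' $g_u$ on the four-element set $\{1,x,y,xy\}$, setting $A_u=\alpha_ug_u$ (which is automatically $\{\pm1\}$-valued on the transversal above, since $\Z K_d$ is commutative and the supports multiply out over disjoint cosets of $H$). At $d=2$ this succeeds: choosing $g_u=1+x+y-xy$ for $u\in\{00,01,10\}$ and $g_{11}=(1+x)(1+y)$ reproduces the blocks of \cref{ex:C82}, because at $d=2$ the relations $x^4=y^4=1$ force $\chi_u$ to annihilate every residual binomial arising in the product $g_ug_u^{(-1)}$. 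For $d\ge3$, however, no such product can work: by the inductive hypothesis $|\psi(\alpha_u)|^2=2^{2d-4}$ whenever $\psi|_E=\chi_u$, so $|\psi(A_u)|^2=2^{2d-4}|\psi(g_u)|^2$, and one would need $|\psi(g_u)|^2=4$ for all $2^{2d-2}$ characters $\psi$ with $\psi|_E=\chi_u$; but a $\{\pm1\}$-valued function supported on only four elements is too rigid to meet that many spectral constraints once $d\ge3$ (the obstruction is essentially the impossibility of a $2\times2$ array whose extreme periodic autocorrelation vanishes). This is the crux of the theorem.

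Accordingly, the real content --- and the step I expect to be the main obstacle --- is to construct the blocks $A_u$ without this crippling product structure. The factor $\alpha_u\alpha_u^{(-1)}$ merely rescales $\chi_u$ and so is powerless against the residual cross terms; the four new blocks must instead genuinely interleave the four smaller blocks $\alpha_v$ across the cosets of $H$ in $K_d$, with signs (and possibly internal shifts within $H$) chosen so that the off-diagonal contributions cancel while the diagonal contribution still collapses to $2^{2d-2}\chi_u$ by way of \cref{prop:orthogonality}~$(i)$. An alternative organization is to first build a signature set on $C_{2^{d-1}}^2\times C_2$, by combining the inductive signature set on $C_{2^{d-1}}^2$ with the trivial one on $C_2$ via \cref{prop:sig-prod}, and then transfer it to $C_{2^d}^2$ through a comparison of these two groups at the level of their common subgroup $E$. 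In either route, once the correct blocks are written down the verification reduces to checking that each $A_u$ is $\{\pm1\}$-valued on the chosen transversal and that the four identities $A_u\chi_uA_u^{(-1)}=2^{2d-2}\chi_u$ hold; given the inductive hypothesis and \cref{prop:orthogonality}, these steps are routine, with \cref{ex:C82} serving as a sanity check at $d=2$.
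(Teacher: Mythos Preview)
Your inductive framework is correct, and your diagnosis of why the naive product lift $A_u=\alpha_u g_u$ fails for $d\ge3$ is accurate and well argued. But the proposal stops precisely at the point where the actual content of the proof begins: you never write down the blocks $A_u$ that work. Saying that the four new blocks ``must genuinely interleave the four smaller blocks $\alpha_v$ across the cosets of $H$'' is a description of what is needed, not a construction, and the closing assurance that ``once the correct blocks are written down the verification reduces to routine checking'' concedes the gap. The alternative route you sketch through $C_{2^{d-1}}^2\times C_2$ is also only a gesture: \cref{prop:sig-prod} produces a signature set on that group with respect to $C_2^3$, not $C_2^2$, and there is no evident mechanism for transferring a signature set between two non-isomorphic groups merely because they share a subgroup.

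What the paper supplies, and what your proposal is missing, is an explicit recursion. Rather than identifying $K_{d-1}$ with the subgroup $\langle x^2,y^2\rangle$ of $K_d$, the paper treats each $A_{ij}$ as a polynomial $A_{ij}(X,Y)$ and substitutes \emph{different} pairs of elements of $K_d$ for the indeterminates --- $(x,y^2)$, $(x^2,y)$, or $(x^2,xy)$ depending on $u$ --- combining two of these substituted blocks with factors $1\pm x^{2^{d-2}}$ (or $1\pm y^{2^{d-2}}$) to form each $\alpha_u$. The verification then rests on the elementary identity
\[
(1+(-1)^i x^{2^{d-2}})\,\psi_{0j}\,(1+(-1)^k x^{-2^{d-2}})
=\begin{cases}
2(1+x^{2^{d-1}})\chi_{ij}(x,y^2) & \text{if } i=k,\\
0 & \text{if } i\ne k,
\end{cases}
\]
which kills the cross terms and reduces the surviving diagonal terms to the inductive hypothesis. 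An automorphism of $K_d$ carrying $\psi_{10}$ to $\psi_{11}$ handles the fourth character via \cref{prop:Aaut}. None of this is guessable from the general shape you have outlined; the specific recursion is the theorem.
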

\begin{proof} 
The proof is by induction on $d \ge 1$.
The case $d=1$ is true, because there exists a trivial signature set on $C_2^2$.

Assume all cases up to $d-1 \ge 1$ are true. Let 
$K_{d-1} = \langle X, Y \rangle$, where $X^{2^{d-1}} = Y^{2^{d-1}} = 1$.
By the inductive hypothesis, there exists a signature set $\{A_{ij} : (i,j) \in U_2\}$ on~$K_{d-1}$ with respect to $\langle X^{2^{d-2}}, Y^{2^{d-2}} \rangle$. 
By associating the group ring $\Z K_{d-1}$ with the quotient ring $\Z[X,Y]/\langle 1-X^{2^{d-1}}, 1-Y^{2^{d-1}} \rangle$, we may regard
each group ring element $A_{ij}$ as a polynomial $A_{ij}(X,Y)$ in $X$ and~$Y$,
and regard each character of $\langle X^{2^{d-2}}, Y^{2^{d-2}} \rangle$ as a polynomial
\[
\chi_{ij}(X,Y) = \big(1+(-1)^{i} X^{2^{d-2}}\big) \big(1+(-1)^{j} Y^{2^{d-2}}) \quad \mbox{for $(i, j) \in U_2$}.
\]
By assumption, in the polynomial ring $\Z[X,Y]/\langle 1-X^{2^{d-1}}, 1-Y^{2^{d-1}} \rangle$ we have
\begin{equation}
\label{eqn:AuXY}
A_{ij}(X,Y) \chi_{ij}(X,Y) A_{ij}(X,Y)^{(-1)} = 2^{2d-4} \chi_{ij}(X,Y) \quad \mbox{for each $(i,j) \in U_2$}.
\end{equation}

Let $K_d = \langle x, y \rangle$, where $x^{2^d} = y^{2^d} = 1$, and let $E = \langle x^{2^{d-1}}, y^{2^{d-1}} \rangle$.
We wish to construct a signature set $\{\alpha_{ij}: (i,j) \in U_2\}$ on $K_d$ with respect to $E$.
Define the $\alpha_{ij}$ in $\Z K_d$ in terms of the polynomials~$A_{ij}$ via
\begin{equation}
\label{eqn:rank2recursion}
\left .
\begin{array}{rl}
\alpha_{00} &= (1+x^{2^{d-2}})A_{00}(x,y^2) + y(1-x^{2^{d-2}})A_{10}(x,y^2), \\[1ex]
\alpha_{01} &= (1+x^{2^{d-2}})A_{01}(x,y^2) + y(1-x^{2^{d-2}})A_{11}(x,y^2), \\[1ex]
\alpha_{10} &= (1+y^{2^{d-2}})A_{10}(x^2,y) + x(1-y^{2^{d-2}})A_{11}(x^2,y), \\[1ex]
\alpha_{11} &= (1+x^{2^{d-2}}y^{2^{d-2}})A_{10}(x^2,xy) + x(1-x^{2^{d-2}}y^{2^{d-2}})A_{11}(x^2,xy), 
\end{array}
\right \} 
\end{equation}
and let the characters of $E$ be
\[
\psi_{ij} = \big (1+(-1)^i x^{2^{d-1}} \big) \big (1+(-1)^j y^{2^{d-1}} \big) \quad \mbox{for each $(i, j) \in U_2$}.
\]

We firstly use \cref{prop:Aaut} to show it is sufficient to prove for each $(i,j) \ne (1,1)$ that $\alpha_{ij}$ is a signature block with respect to~$\psi_{ij}$.
Let $\sigma$ be the group automorphism of $K_d$ that maps $x$ to itself and maps $y$ to~$xy$. 
Then $\sigma(\alpha_{10}) = \alpha_{11}$ by definition, and $\sigma$ fixes $E$, and
\[\sigma(\psi_{10}) = (1-x^{2^{d-1}})(1+x^{2^{d-1}}y^{2^{d-1}}) = (1-x^{2^{d-1}})(1-y^{2^{d-1}}) = \psi_{11}.
\]
Therefore if $\alpha_{10}$ is a signature block on $K_d$ with respect to $\psi_{10}$, then $\alpha_{11}$ is a signature block on $K_d$ with respect to~$\psi_{11}$ by \cref{prop:Aaut}.

We next show that $\alpha_{00}$ is a $\{\pm 1\}$-valued function on a set of coset representatives for $E$ in~$K_d$, and a similar argument shows that the same holds for $\alpha_{01}$ and~$\alpha_{10}$.
By definition, $A_{00}(X,Y)$ is $\{\pm 1\}$-valued on exactly one of the four values 
$\big\{X^i Y^j, X^i Y^{j+2^{d-2}}, X^{i+2^{d-2}} Y^j, X^{i+2^{d-2}} Y^{j+2^{d-2}}\big\}$
for $0 \le i < 2^{d-2}, \, 0 \le j < 2^{d-2}$.
Therefore $A_{00}(x,y^2)$ is $\{\pm 1\}$-valued on exactly one of the four values 
$\big\{x^i y^{2j}, x^i y^{2j+2^{d-1}}, x^{i+2^{d-2}} y^{2j}, x^{i+2^{d-2}} y^{2j+2^{d-1}}\big\}$
for $0 \le i < 2^{d-2}, \, 0 \le j < 2^{d-2}$,
and so $(1+x^{2^{d-2}})A_{00}(x,y^2)$ is $\{\pm 1\}$-valued on exactly one of the four values 
$\big\{x^i y^{2j}, x^i y^{2j+2^{d-1}}, x^{i+2^{d-1}} y^{2j}, x^{i+2^{d-1}} y^{2j+2^{d-1}}\big\}$
for $0 \le i < 2^{d-1}, \, 0 \le j < 2^{d-2}$. Likewise,
$y(1-x^{2^{d-2}})A_{10}(x,y^2)$ is $\{\pm 1\}$-valued on exactly one of the four values
$\big\{x^i y^{2j+1}, x^i y^{2j+2^{d-1}+1}, x^{i+2^{d-1}} y^{2j+1},$ $x^{i+2^{d-1}} y^{2j+2^{d-1}+1}\big\}$
for $0 \le i < 2^{d-1}, \, 0 \le j < 2^{d-2}$. Combining, $\alpha_{00}$ is $\{\pm 1\}$-valued on exactly one of the four values 
$\big\{x^i y^j, x^i y^{j+2^{d-1}}, x^{i+2^{d-1}} y^j, x^{i+2^{d-1}} y^{j+2^{d-1}}\big\}$
for $0 \le i < 2^{d-1}, \, 0 \le j < 2^{d-1}$.

It remains to show that in $\Z K_d$ we have
\begin{equation}
\label{eqn:alphapsi}
\alpha_{ij} \psi_{ij} \alpha_{ij}^{(-1)} = 2^{2d-2} \psi_{ij} \quad \mbox{for each $(i,j) \ne (1,1)$}.
\end{equation}
Using $x^{2^d} = 1$, for $i,k \in \{0,1\}$ we have the identity
\[
(1+x^{2^{d-1}})(1 + (-1)^i x^{2^{d-2}})(1 + (-1)^k x^{-2^{d-2}}) = 
 \begin{cases} 	2 (1+x^{2^{d-1}}) (1 + (-1)^i x^{2^{d-2}}) 	& \mbox{if $i=k$,} \\
		0 						& \mbox{if $i\ne k$,}
 \end{cases}
\]
and multiplication by $1+(-1)^j y^{2^{d-1}}$ for $j \in \{0,1\}$ then gives
\begin{equation}
\label{eqn:reln}
(1 + (-1)^i x^{2^{d-2}}) \psi_{0j} (1 + (-1)^k x^{-2^{d-2}}) = 
 \begin{cases} 	2 (1 + x^{2^{d-1}}) \chi_{ij}(x,y^2)	& \mbox{if $i=k$,} \\
		0					& \mbox{if $i\ne k$}.
 \end{cases}
\end{equation}

We can now establish \eqref{eqn:alphapsi} for $(i,j) = (0,0)$. Using \eqref{eqn:rank2recursion}, we calculate
\begin{align}
\alpha_{00} \psi_{00} \alpha_{00}^{(-1)}
 &=            \big( (1+x^{2^{d-2}})A_{00}(x,y^2) + y(1-x^{2^{d-2}})A_{10}(x,y^2) \big ) \times \psi_{00} \times \nonumber \\
 &\phantom{==} \big( (1+x^{-2^{d-2}})A_{00}(x,y^2)^{(-1)} + y^{-1}(1-x^{-2^{d-2}})A_{10}(x,y^2)^{(-1)} \big ) \nonumber \\
 &= 	       2 (1+x^{2^{d-1}}) A_{00}(x,y^2) \chi_{00}(x,y^2) A_{00}(x,y^2)^{(-1)} + \nonumber \\
 &\phantom{==} 2 (1+x^{2^{d-1}}) A_{10}(x,y^2) \chi_{10}(x,y^2) A_{10}(x,y^2)^{(-1)}, \label{eqn:indet}
\end{align}
using \eqref{eqn:reln} with $i \ne k$ to remove the terms involving $A_{00}(x,y^2) A_{10}(x,y^2)^{(-1)}$ and $A_{10}(x,y^2) A_{00}(x,y^2)^{(-1)}$, and using \eqref{eqn:reln} with $i = k$ to simplify the surviving terms. 
Take $X=x$ and $Y=y^2$ in \eqref{eqn:AuXY} to show that, in the polynomial ring $\Z[x,y]/\langle 1-x^{2^{d-1}}, 1-y^{2^d} \rangle$,
\[
A_{ij}(x,y^2) \chi_{ij}(x,y^2) A_{ij}(x,y^2)^{(-1)} = 2^{2d-4} \chi_{ij}(x,y^2) \quad \mbox{for each $(i,j) \in U_2$}.
\]
This implies that, in the polynomial ring $\Z[x,y]/\langle 1-x^{2^d}, 1-y^{2^d} \rangle$,
\begin{align*}
\lefteqn{(1+x^{2^{d-1}}) A_{ij}(x,y^2) \chi_{ij}(x,y^2) A_{ij}(x,y^2)^{(-1)}} \hspace{50mm} \\
 &= 2^{2d-4} (1+x^{2^{d-1}}) \chi_{ij}(x,y^2) \quad \mbox{for each $(i,j) \in U_2$}.
\end{align*}
Substitution in \eqref{eqn:indet} then gives
\[
\alpha_{00} \psi_{00} \alpha_{00}^{(-1)} 
 = 2^{2d-3} (1+x^{2^{d-1}}) (\chi_{00}(x,y^2) + \chi_{10}(x,y^2)) = 2^{2d-2} \psi_{00},
\]
so \eqref{eqn:alphapsi} holds for $(i,j) = (0,0)$. 

A similar derivation gives
\begin{align*}
\alpha_{01} \psi_{01} \alpha_{01}^{(-1)} 
 & = 2^{2d-3} (1+x^{2^{d-1}}) (\chi_{01}(x,y^2) + \chi_{11}(x,y^2)) = 2^{2d-2} \psi_{01}, \\
\alpha_{10} \psi_{10} \alpha_{10}^{(-1)} 
 & = 2^{2d-3} (1+y^{2^{d-1}}) (\chi_{10}(x^2,y) + \chi_{11}(x^2,y)) = 2^{2d-2} \psi_{10}, 
\end{align*}
so that \eqref{eqn:alphapsi} holds for $(i,j) = (0,1)$ and $(i,j) = (1,0)$.

Therefore the $\alpha_{ij}$ form a signature set on $K_d$ with respect to~$E$.
This shows that case $d$ is true and completes the induction.
\end{proof}

We next illustrate the recursive construction method used in the proof of \cref{thm:rank2}.

\begin{example}
\label{ex:rank2}
A trivial signature set $\{A^1_{ij}: (i,j) \in U_2\}$ on $C_2^2$ with respect to itself is given by
\[
A^1_{ij} = 1 \quad \mbox{for all $(i,j) \in U_2$}.
\]
Apply the recursion \eqref{eqn:rank2recursion} with $d=2$ to obtain the signature set $\{A^2_{ij} : (i,j) \in U_2\}$ on 
$C_4^2 = \langle x, y \rangle$ with respect to $\langle x^2, y^2 \rangle \cong C_2^2$ given by
\begin{align*}
A^2_{00} = A^2_{01} 	&= (1+x) + y(1-x) = 1+x+y-xy, \\
A^2_{10} 		&= (1+y) + x(1-y) = 1+x+y-xy, \\
A^2_{11} 		&= (1+xy) + x(1-xy) = 1+x-x^2y+xy.
\end{align*}
Apply the recursion \eqref{eqn:rank2recursion} again with $d=3$ to obtain the signature set $\{A^3_{ij} : (i,j) \in U_2\}$ on 
$C_8^2 = \langle x, y \rangle$ with respect to $\langle x^4, y^4 \rangle \cong C_2^2$ given by
\begin{align*}
A^3_{00} 
 &= (1+x^2)A^2_{00}(x,y^2) + y(1-x^2)A^2_{10}(x,y^2) \\
 &= (1+x^2)(1+x+y^2-x y^2) + y(1-x^2)(1+x+y^2- xy^2), \\[1ex]
A^3_{01} 
 &= (1+x^2)A^2_{01}(x,y^2) + y(1-x^2)A^2_{11}(x,y^2) \\
 &= (1+x^2)(1+x+y^2-x y^2) + y(1-x^2)(1+x-x^2y^2+xy^2), \\[1ex]
A^3_{10} 
 &= (1+y^2)A^2_{10}(x^2,y) + x(1-y^2)A^2_{11}(x^2,y) \\
 &= (1+y^2)(1+x^2+y-x^2y) + x(1-y^2)(1+x^2-x^4y+x^2y), \\[1ex]
A^3_{11} 
 &= (1+x^2y^2)A^2_{10}(x^2,xy) + x(1-x^2y^2)A^2_{11}(x^2,xy) \\
 &= (1+x^2y^2)(1+x^2+xy-x^3y) + x(1-x^2y^2)(1+x^2-x^5y+x^3y).
\end{align*}
\end{example}

We note that the recursion \eqref{eqn:rank2recursion} in the proof of \cref{thm:rank2} has a simpler form when expressed in terms of group ring elements $B_{ij} = A_{ij} \chi_{ij}$ and $\beta_{ij} = \alpha_{ij}\psi_{ij}$, namely
\[
\begin{array}{rl}
\beta_{00}(x,y) &= (1+x^{2^{d-1}})\big(B_{00}(x,y^2) + yB_{10}(x,y^2)\big), \\[1ex]
\beta_{01}(x,y) &= (1+x^{2^{d-1}})\big(B_{01}(x,y^2) + yB_{11}(x,y^2)\big), \\[1ex]
\beta_{10}(x,y) &= (1+y^{2^{d-1}})\big(B_{10}(x^2,y) + xB_{11}(x^2,y)\big), \\[1ex]
\beta_{11}(x,y) &= (1-y^{2^{d-1}})\big(B_{10}(x^2,xy) + xB_{11}(x^2,xy)\big).
\end{array}
\]

We now prove \cref{thm:abelian-sigset} in full generality, using the proof of \cref{thm:rank2} as a model. We abbreviate some of the proof, focussing attention on the parts for which a new argument or additional care is needed.

\begin{proof}[Proof of \cref{thm:abelian-sigset}]
The proof is by induction on $d \ge 1$.
In the case $d=1$, we have $r=2$ and ${\cal K}_{1,2} = \{C_2^2\}$. The case $d=1$ is therefore true, because there exists a trivial signature set on $C_2^2$.

Assume all cases up to $d-1 \ge 1$ are true. 
We shall write $u = (i, j, u_3, \dots, u_r) \in U_r$ as $(i, j, v)$, where $v = (u_3, \dots, u_r)$.
Let
\[
K_{d,r} = C_{2^{a_1}} \times \dots \times C_{2^{a_r}} = \langle x, y, x_3, \dots, x_r \rangle \in {\cal K}_{d,r},
\]
where $x^{2^{a_1}} = y^{2^{a_2}} = x_3^{2^{a_3}} = \dots = x_r^{2^{a_r}} = 1$ and
$d-r+2 \ge a_1 \ge a_2 \ge \dots \ge a_r \ge 1$ and $\sum_i a_i = 2d-r+2$.
The unique subgroup of $K_{d,r}$ isomorphic to $C_2^r$ is
$E_{d,r} = \langle x^{2^{a_1-1}}, y^{2^{a_2-1}}, x_3^{2^{a_3-1}}, \dots, x_r^{2^{a_r-1}} \rangle$.

If $a_r=1$, then by the inductive hypothesis there is a signature set on the group $\langle x, y, x_3, \dots, x_{r-1} \rangle\in {\cal K}_{d-1,r-1}$.
In that case we may use \cref{prop:sig-prod} to combine this with a trivial signature set on $C_2$ in order to obtain the required signature set on $K_{d,r}$ with respect to~$E_{d,r}$.

We may therefore take $d-r+2 \ge a_1 \ge a_2 \ge \dots \ge a_r \ge 2$. This implies that $r \le d$,
and if $r > 2$ then $a_3 \le d-r+1$ (otherwise 
$2d-r+2 = \sum_i a_i \ge 3(d-r+2) + (r-3)2 = 3d-r$, giving the contradiction $r \le d \le 2$).
By the inductive hypothesis, the group 
\[
C_{2^{a_1-1}} \times C_{2^{a_2-1}} \times C_{2^{a_3}} \times \dots \times C_{2^{a_r}}  = \langle X, Y, x_3, \dots, x_r \rangle \in {\cal K}_{d-1,r},
\]
where $X^{2^{a_1-1}} = Y^{2^{a_2-1}} = x_3^{2^{a_3}} = \dots = x_r^{2^{a_r}} = 1$,
therefore contains a signature set $\{A_{ijv} : (i,j,v) \in U_r\}$ with respect to 
$E_{d-1,r} = \langle X^{2^{a_1-2}}, Y^{2^{a_2-2}}, x_3^{2^{a_3-1}}, \dots, x_r^{2^{a_r-1}} \rangle$.

Regard each group ring element $A_{ijv}$ as a polynomial in $X,Y,x_3,\dots,x_r$, but abbreviate this as $A_{ijv}(X,Y)$ because we will make variable substitutions only for $X$ and~$Y$. Similarly, regard each character of $E_{d-1,r}$ as a polynomial
\[
\chi_{ijv}(X,Y) = \big(1+(-1)^i X^{2^{a_1-2}}\big) \big(1+(-1)^j Y^{2^{a_2-2}})\, \tau_v 
\]
where
\[
\tau_v = (1+(-1)^{u_3}x_3^{2^{a_3-1}}) \dots (1+(-1)^{u_r}x_r^{2^{a_r-1}}).
\]
By assumption, in the polynomial ring $\Z[X,Y,x_3,\dots,x_r]/\langle 1-X^{2^{a_1-1}}, 1-Y^{2^{a_2-1}}, 1-x_3^{2^{a_3}}, \dots, 1-x_r^{2^{a_r}} \rangle$ we have
\begin{equation}
\label{eqn:AuXY-gen}
A_{ijv}(X,Y) \chi_{ijv}(X,Y) A_{ijv}(X,Y)^{(-1)} = 2^{2d-2r} \chi_{ijv}(X,Y) \quad \mbox{for each $(i,j,v) \in U_r$}.
\end{equation}

We wish to construct a signature set $\{\alpha_{ijv}: (i,j,v) \in U_r\}$ on $K_{d,r}$ with respect to~$E_{d,r}$.
Define the $\alpha_{ijv}$ in $\Z K_{d,r}$ in terms of the polynomials~$A_{ijv}$ via
\begin{equation}
\label{eqn:genrecursion}
\left .
\begin{array}{rl}
\alpha_{00v} &= (1+x^{2^{a_1-2}})A_{00v}(x,y^2) + y(1-x^{2^{a_1-2}})A_{10v}(x,y^2), \\[1ex]
\alpha_{01v} &= (1+x^{2^{a_1-2}})A_{01v}(x,y^2) + y(1-x^{2^{a_1-2}})A_{11v}(x,y^2), \\[1ex]
\alpha_{10v} &= (1+y^{2^{a_2-2}})A_{10v}(x^2,y) + x(1-y^{2^{a_2-2}})A_{11v}(x^2,y), \\[1ex]
\alpha_{11v} &= (1+x^{2^{a_1-2}}y^{2^{a_2-2}})A_{10v}(x^2,x^{2^{a_1-a_2}}y) + x(1-x^{2^{a_1-2}}y^{2^{a_2-2}})A_{11v}(x^2,x^{2^{a_1-a_2}}y),
\end{array}
\right \} 
\end{equation}
and let the characters of $E_{d,r}$ be
\[
\psi_{ijv} = \big (1+(-1)^i x^{2^{a_1-1}} \big) \big (1+(-1)^j y^{2^{a_2-1}} \big) \,\tau_v \quad \mbox{for each $(i,j,v) \in U_r$}.
\]

We firstly use \cref{prop:Aaut} to show it is sufficient to prove for each $(i,j,v) \ne (1,1,v)$ that $\alpha_{ijv}$ is a signature block with respect to~$\psi_{ijv}$.
Let $\sigma$ be the group automorphism of $K_{d,r}$ that maps $x$ to itself and maps $y$ to $x^{2^{a_1-a_2}}y$ (which has order~$2^{a_2}$).
Then $\sigma(\alpha_{10v}) = \alpha_{11v}$ by definition, and $\sigma$ fixes $E_{d,r}$, and
$\sigma(\psi_{10v}) = \psi_{11v}$.
Therefore if $\alpha_{10v}$ is a signature block on $K_{d,r}$ with respect to $\psi_{10v}$, then $\alpha_{11v}$ is a signature block on $K_{d,r}$ with respect to~$\psi_{11v}$ by \cref{prop:Aaut}.

We next show that each $\alpha_{00v}$ is a $\{\pm 1\}$-valued function on a set of coset representatives for $E_{d,r}$ in~$K_{d,r}$, and a similar argument shows that the same holds for each $\alpha_{01v}$ and~$\alpha_{10v}$. Fix $z = x_3^{i_3} \dots x_r^{i_r}$.
By definition, $A_{00v}(X,Y)$ is $\{\pm 1\}$-valued on 
exactly one of the four values 
$\big\{X^i Y^j z, X^i Y^{j+2^{a_2-2}} z, X^{i+2^{a_1-2}} Y^j z,$ $ X^{i+2^{a_1-2}} Y^{j+2^{a_2-2}} z\big\}$
for $0 \le i < 2^{a_1-2}, \, 0 \le j < 2^{a_2-2}$.
It follows that $\alpha_{00v}$ is $\{\pm 1\}$-valued on exactly one of the four values 
$\big\{x^i y^j z, x^i y^{j+2^{a_2-1}} z, x^{i+2^{a_1-1}} y^j z, x^{i+2^{a_1-1}} y^{j+2^{a_2-1}} z\big\}$
for $0 \le i < 2^{a_1-1}, \, 0 \le j < 2^{a_2-1}$.

It remains to show that in $\Z K_{d,r}$ we have
\begin{equation}
\label{eqn:alphapsi-gen}
\alpha_{ijv} \psi_{ijv} \alpha_{ijv}^{(-1)} = 2^{2d-2r+2} \psi_{ijv} \quad \mbox{for each $(i,j,v) \ne (1,1,v)$}.
\end{equation}
For $i,j,k \in \{0,1\}$, we have the identity
\begin{equation}
\label{eqn:reln-gen}
(1 + (-1)^i x^{2^{a_1-2}}) \psi_{0jv} (1 + (-1)^k x^{-2^{a_1-2}}) = 
 \begin{cases} 	2 (1 + x^{2^{a_1-1}}) \chi_{ijv}(x,y^2)	& \mbox{if $i=k$,} \\
		0					& \mbox{if $i\ne k$},
 \end{cases}
\end{equation}
from which we now establish \eqref{eqn:alphapsi-gen} for $(i,j,v) = (0,0,v)$. We calculate
\begin{align}
\alpha_{00v} \psi_{00v} \alpha_{00v}^{(-1)}
 &=            \big( (1+x^{2^{a_1-2}})A_{00v}(x,y^2) + y(1-x^{2^{a_1-2}})A_{10v}(x,y^2) \big ) \times \psi_{00v} \times \nonumber \\
 &\phantom{==} \big( (1+x^{-2^{a_1-2}})A_{00v}(x,y^2)^{(-1)} + y^{-1}(1-x^{-2^{a_1-2}})A_{10v}(x,y^2)^{(-1)} \big ) \nonumber \\
 &= 	       2 (1+x^{2^{a_1-1}}) A_{00v}(x,y^2) \chi_{00v}(x,y^2) A_{00v}(x,y^2)^{(-1)} + \nonumber \\
 &\phantom{==} 2 (1+x^{2^{a_1-1}}) A_{10v}(x,y^2) \chi_{10v}(x,y^2) A_{10v}(x,y^2)^{(-1)}, \label{eqn:indet-gen}
\end{align}
using \eqref{eqn:reln-gen}.
Take $X=x$ and $Y=y^2$ in \eqref{eqn:AuXY-gen} to show that, in the polynomial ring $\Z[x,y,x_3,\dots,x_r]/\langle 1-x^{2^{a_1}}, 1-y^{2^{a_2}}, 1-x_3^{2^{a_3}}, \dots, 1-x_r^{2^{a_r}} \rangle$,
\begin{align*}
\lefteqn{(1+x^{2^{a_1-1}}) A_{ijv}(x,y^2) \chi_{ijv}(x,y^2) A_{ijv}(x,y^2)^{(-1)}} \hspace{45mm} \\
 & = 2^{2d-2r} (1+x^{2^{a_1-1}}) \chi_{ijv}(x,y^2) \quad \mbox{for each $(i,j,v) \in U_r$}.
\end{align*}
Substitution in \eqref{eqn:indet-gen} then gives
\[
\alpha_{00v} \psi_{00v} \alpha_{00v}^{(-1)} 
 = 2^{2d-2r+1} (1+x^{2^{a_1-1}}) (\chi_{00v}(x,y^2) + \chi_{10v}(x,y^2)) = 2^{2d-2r+2} \psi_{00v},
\]
so \eqref{eqn:alphapsi-gen} holds for $(i,j,v) = (0,0,v)$. 

A similar derivation gives
\begin{align*}
\alpha_{01v} \psi_{01v} \alpha_{01v}^{(-1)} 
 & = 2^{2d-2r+1} (1+x^{2^{a_1-1}}) (\chi_{01v}(x,y^2) + \chi_{11v}(x,y^2)) = 2^{2d-2r+2} \psi_{01v}, \\
\alpha_{10v} \psi_{10v} \alpha_{10v}^{(-1)} 
 & = 2^{2d-2r+1} (1+y^{2^{a_2-1}}) (\chi_{10v}(x^2,y) + \chi_{11v}(x^2,y)) = 2^{2d-2r+2} \psi_{10v}, 
\end{align*}
so that \eqref{eqn:alphapsi-gen} holds for $(i,j,v) = (0,1,v)$ and $(i,j,v) = (1,0,v)$.

Therefore the $\alpha_{ijv}$ form a signature set on $K_{d,r}$ with respect to~$E_{d,r}$.
This shows that case $d$ is true and completes the induction.
\end{proof}

We now illustrate the recursive construction method used in the proof of \cref{thm:abelian-sigset}. 
\begin{example}
\label{ex:sigset844}
We shall construct a signature set on $C_8 \times C_4^2$.
By \cref{ex:rank2}, there is a signature set $\{A'_{ik} : (i,k) \in U_2\}$ on $C_4^2 = \langle x, z \rangle$ with respect to $\langle x^2, z^2 \rangle$ given by 
\begin{align*}
A'_{00} = A'_{01} = A'_{10} &= 1+x+z-xz, \\
A'_{11} &= 1+x-x^2z+xz.
\end{align*}
Use the product construction of \cref{prop:sig-prod} to combine this with a trivial signature set on $C_2$, producing a signature set $\{A_{ijk}: (i,j,k) \in U_3\}$ on $C_4 \times C_2 \times C_4 = \langle x,y,z \rangle$ with respect to $\langle x^2,y,z^2 \rangle \cong C_2^3$ given by
\begin{align*}
A_{000} = A_{010} = A_{001} = A_{011} = A_{100} = A_{110} &= 1+x+z-x z, \\
A_{101} = A_{111} &= 1+x-x^2z+x z.
\end{align*}
Now apply the recursion \eqref{eqn:genrecursion} to produce a signature set 
$\{\alpha_{ijk}: (i,j,k) \in U_3\}$ on 
$C_8 \times C_4^2 = \langle x,y,z \rangle$ with respect to $\langle x^4,y^2,z^2 \rangle \cong C_2^3$ given by
\begin{align*}
\alpha_{000} 
 &= (1+x^2)(1+x+z-x z) + y(1-x^2)(1+x+z-x z), \\
\alpha_{001} 
 &= (1+x^2)(1+x+z-x z) + y(1-x^2)(1+x-x^2z+x z), \\
\alpha_{010} 
 &= (1+x^2)(1+x+z-x z) + y(1-x^2)(1+x+z-x z), \\
\alpha_{011} 
 &= (1+x^2)(1+x+z-x z) + y(1-x^2)(1+x-x^2z+x z), \\
\alpha_{100} 
 &= (1+y)(1+x^2+z-x^2 z)+ x(1-y)(1+x^2+z-x^2 z), \\
\alpha_{101} 
 &= (1+y)(1+x^2-x^4z+x^2 z)+ x(1-y)(1+x^2-x^4z+x^2 z), \\
\alpha_{110} 
 &= (1+x^2y)(1+x^2+z-x^2 z) + x(1-x^2y)(1+x^2+z-x^2 z), \\
\alpha_{111} 
 &= (1+x^2y)(1+x^2-x^4z+x^2 z) + x(1-x^2y)(1+x^2-x^4z+x^2 z) .
\end{align*}
\end{example}

%%%%%%%%%%%%%%%%%%%%%%%%%%%%%%%%%%%%%%%%%%%%%%%%%%
\section{Further Construction Methods}
%%%%%%%%%%%%%%%%%%%%%%%%%%%%%%%%%%%%%%%%%%%%%%%%%%
\label{sec:further}

As shown in \cref{tab:2-groups}, our main result (\cref{thm:main}) uses signature sets on abelian groups to provide constructions for difference sets in the great majority of the groups of order 64 and 256 that are not excluded by Theorems~\ref{thm:turyn} and~\ref{thm:dillon}.
In this section, we describe the methods that were used to show that the 22 remaining groups of order 64, and the 1,416 remaining groups of order 256, all belong to~${\cal H}$. 

In \cref{subsec:nonabelian-sigset}, we present a construction method arising under \cref{thm:prehadamard} from a signature set on a nonabelian group; recall that \cref{defn:sig} for a signature set does not require the group $K$ to be abelian. 
In \cref{subsec:productPTA}, we present a product construction using perfect ternary arrays, without constraining these arrays in relation to a subgroup.
In \cref{subsec:transfer}, we describe three non-systematic methods of transferring a difference set in one group to another.
We used the methods of Sections~\ref{subsec:nonabelian-sigset}--\ref{subsec:transfer} to establish that all but one of the 22 remaining non-excluded groups of order 64, and all but one of the 1,416 remaining non-excluded groups of order 256, belong to~${\cal H}$.
In \cref{subsec:final}, we describe the construction of a Hadamard difference set in both of these final groups using group representations.
In \cref{subsec:combination}, we show that the signature set construction of \cref{subsec:nonabelian-sigset} and the perfect ternary array product construction of~\cref{subsec:productPTA} are closely related and can sometimes be combined, which could in future assist in determining which $2$-groups of order larger than~256 belong to~${\cal H}$.

%%%%%%%%%%%%%%%%%%%%%%%%%%
\subsection{Signature Set on Nonabelian Group}
\label{subsec:nonabelian-sigset}

Our first construction method applies \cref{thm:prehadamard} to a signature set on a nonabelian group to produce Hadamard difference sets in a variety of larger groups. We illustrate this method by exhibiting a signature set on the quaternion group of order~$8$.

\begin{prop}
\label{prop:sigsetQ}
Let $Q = \langle x,y : x^4=y^4=1, \, yxy^{-1} = x^{-1}, \, x^2=y^2 \rangle$ be the quaternion group of order~$8$, and let $G$ be a group of order $16$ containing a subgroup isomorphic to~$Q$. Then $G \in {\cal H}$.
\end{prop}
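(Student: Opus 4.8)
The plan is to apply \cref{thm:prehadamard} with $K = Q$, in the case $r = 1$. Since $|G| = 16$ and $|Q| = 8$, the subgroup $Q$ has index~$2$ in~$G$ and is therefore normal in~$G$. For the normal subgroup $E \cong C_2^1 = C_2$ required by \cref{thm:prehadamard} I would take $E = \langle x^2 \rangle$, the centre of~$Q$: the centre is characteristic in~$Q$ and $Q$ is normal in~$G$, so $E$ is normal in~$G$, and $E \le K = Q$ with $K$ of index $2^1$ in~$G$. With these choices, \cref{thm:prehadamard} reduces the proposition to producing a single object, namely a signature set on~$Q$ with respect to~$E$.

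To build that signature set, note that $E = \{1, x^2\}$ has exactly the two characters $\chi_0 = 1 + x^2$ and $\chi_1 = 1 - x^2$, both of which are central in $\Z Q$ because $x^2$ generates the centre of~$Q$. Taking $\{1, x, y, xy\}$ as a transversal of $E$ in~$Q$, I would set
\[
A_0 = A_1 = 1 + x + y - xy \quad \mbox{in } \Z Q,
\]
a $\{\pm 1\}$-valued function on this transversal, and verify that the multiset $\{A_0, A_1\}$ is a signature set on~$Q$ with respect to~$E$. By \cref{defn:sig} this amounts to checking $A_u \chi_u A_u^{(-1)} = \tfrac{|Q|}{2}\chi_u = 4\chi_u$ for $u \in \{0,1\}$; since $\chi_u$ is central, $A_u \chi_u A_u^{(-1)} = \chi_u A_u A_u^{(-1)}$, so it suffices to establish the single identity $A_u A_u^{(-1)} = 4$ in $\Z Q$. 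This is a short computation using $x^{-1} = x^3$, $y^{-1} = x^2 y$, $(xy)^{-1} = x^3 y$ and the multiplication table of~$Q$: the four ``diagonal'' products contribute $1 + 1 + 1 + 1 = 4$, and the remaining products cancel in pairs, precisely because the product of the four coefficients of~$A_u$ equals $(+1)(+1)(+1)(-1) = -1$. Once the signature set is in hand, \cref{thm:prehadamard} yields $G \in {\cal H}$; as remarked in \cref{sec:intro}, this in particular accounts for the two groups of order~$16$ not reached by \cref{thm:main}.

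I do not anticipate a serious obstacle: the construction is light, and essentially all the work has been absorbed into \cref{thm:prehadamard} and, beneath it, Drisko's theorem. The one point that needs care is the verification that $A_u A_u^{(-1)} = 4$. Because $Q$ is nonabelian, the group elements $x$ and $x^{-1} = x^3$ are distinct in $\Z Q$ — and likewise $y$ and $y^{-1}$, and $xy$ and $(xy)^{-1}$ — so the sign pattern of $A_u$ genuinely matters; the naive all-positive choice $A_0 = 1 + x + y + xy$ fails the identity for the principal character $\chi_0$, which is why one coefficient must be flipped. Keeping this bookkeeping straight, rather than any conceptual difficulty, is the thing to watch.
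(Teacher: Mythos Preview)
Your proposal is correct and essentially identical to the paper's own proof: both apply \cref{thm:prehadamard} with $K=Q$, $r=1$, and $E=\langle x^2\rangle$, reduce via centrality of $E$ to the single identity $AA^{(-1)}=4$ in $\Z Q$, and take $A_0=A_1$. The only cosmetic differences are that the paper chooses $A=1-x-y-xy$ rather than your $1+x+y-xy$ (both satisfy $AA^{(-1)}=4$), and it justifies normality of $E$ in $G$ by noting $E$ is the \emph{unique} $C_2$ in $Q$ rather than invoking that the centre is characteristic.
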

\begin{proof}
Let $E_1 = \langle x^2 \rangle \cong C_2$, and let 
\[
\chi_0 = 1+x^2, \quad \chi_1 = 1-x^2 
\]
be the characters of~$E_1$.
Since $E_1$ is the unique subgroup of $Q$ isomorphic to $C_2$, and $Q$ has index $2$ and so is normal in $G$, we have that $E_1$ is normal in~$G$.
Therefore by \cref{thm:prehadamard} with $r=1$, it is sufficient to exhibit a signature set $\{A_0, A_1\}$ on~$Q$ with respect to~$E_1$ (and then according to \eqref{eqn:Dast2} there is a difference set in $G$ of the form $g_0A_0\chi_0 + g_1A_1\chi_1$).

Let $A = 1-x-y-xy$, and let $\{A_0, A_1\} = \{A, A\}$. 
Then $A$ is a $\{\pm1\}$-valued function on a set of coset representatives for $E_1$ in $Q$, and direct calculation shows that $A A^{(-1)} = 4$ in $\Z Q$. Since $E_1$ is a central subgroup of $Q$, we therefore have in $\Z Q$ that
\[
A_u \chi_u A_u^{(-1)} = A_u A_u^{(-1)} \chi_u = 4 \chi_u = \tfrac{|Q|}{2}\chi_u \quad \mbox{for $u \in \{0,1\}$},
\]
as required.
\end{proof}
As noted prior to \cref{tab:2-groups}, we can use \cref{thm:main} and \cref{prop:sigsetQ} to recover the classification of Hadamard groups of order~16 carried out in the 1970s: \cref{thm:main} accounts for the 10 groups 
containing a normal subgroup isomorphic to $C_2^2$, and \cref{prop:sigsetQ} accounts for 2 further groups (the generalized quaternion group and the semidihedral group) containing a subgroup isomorphic to~$Q$.

Furthermore, using \cref{prop:sig-prod} we may now take the product of a signature set on $Q$ with respect to $E_1$ given in the proof of \cref{prop:sigsetQ}, and a trivial signature set on $C_2$, to give a signature set on $Q \times C_2$ with respect to $E_1 \times C_2 \cong C_2^2$. 
Then from \cref{thm:prehadamard}, every group of order $64$ containing a normal subgroup isomorphic to $Q \times C_2$ belongs to~${\cal H}$. 

We now use a Hadamard difference set to construct a signature set on certain 
groups of order $2^{2d+1}$.

\begin{prop}
\label{prop:HDSbyC2}
Suppose $D$ is a Hadamard difference set in a group~$H$, and let $E_1 \cong C_2$.
Then $\{D,D\}$ is a signature set on $H \times E_1$ with respect to~$E_1$. 
\end{prop}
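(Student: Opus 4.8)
The plan is to verify the two defining conditions of \cref{defn:sig} directly, exploiting that $E_1$ is central in $K := H \times E_1$ and that $D$ satisfies the Hadamard equation~\eqref{eqn:DDstar}. This is essentially a transcription of the final step in the proof of \cref{prop:sigsetQ}, with the Hadamard difference set $D$ playing the role of the element $A = 1-x-y-xy$ there.

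First I would identify $H$, viewed as $H \times \{1\}$, as a set of coset representatives for $E_1$ in $K$. Since $D$ is by hypothesis a $\{\pm 1\}$-valued function on $H$, it follows that, regarded as an element of $\Z K$, each of the two copies of $D$ in the multiset $\{D,D\}$ is a $\{\pm 1\}$-valued function on a set of coset representatives for $E_1$ in $K$, as required of a signature block.

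Next I would verify the multiplicative condition. Write $E_1 = \langle t \rangle$ with $t^2 = 1$, so its characters are $\chi_0 = 1+t$ and $\chi_1 = 1-t$ in $\Z E_1 \subseteq \Z K$. Because $E_1$ is central in $K$, each $\chi_u$ commutes with $D$ in $\Z K$, so for $u \in \{0,1\}$ we have $D \chi_u D^{(-1)} = D D^{(-1)} \chi_u$. Since $D$ is a Hadamard difference set in $H$, equation~\eqref{eqn:DDstar} gives $D D^{(-1)} = |H|$ in $\Z H$, hence also in $\Z K$; therefore $D \chi_u D^{(-1)} = |H| \chi_u = \tfrac{|K|}{2}\chi_u$, which is exactly the signature block condition of \cref{defn:sig} with $r=1$. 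As this holds for both $u=0$ and $u=1$, the multiset $\{D,D\}$ is a signature set on $K$ with respect to $E_1$.

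I do not anticipate any real obstacle: the only points needing a moment's thought are the identification of $H$ as a transversal for $E_1$ in $H \times E_1$ and the centrality of $E_1$ in the direct product, both of which are immediate. The substance of the statement lies entirely in recognizing that the Hadamard equation $DD^{(-1)} = |H|$ is precisely the $r=1$ signature block identity once $\chi_u$ is pulled through the central factor.
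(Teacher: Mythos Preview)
Your proof is correct and follows essentially the same approach as the paper: identify $H$ as a transversal for $E_1$ in $H\times E_1$, use centrality of $E_1$ to commute $\chi_u$ past $D$, and invoke $DD^{(-1)}=|H|$ to obtain the signature block identity for both $u=0,1$. The paper's proof is virtually identical, even using the same parallel with the argument in \cref{prop:sigsetQ}.
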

\begin{proof}
We are given that $D$ is a $\{\pm 1\}$-valued function on the set $H$ of coset representatives for $E_1$ in $H \times E_1$. 
Let $\{A_0, A_1\} = \{D, D\}$, and write $E_1 = \langle x \rangle$ so that the characters of $E_1$ are $\chi_0 = 1+x$ and $\chi_1 = 1-x$. 
Since $x$ commutes with $D$, we have in $\Z (H \times E_1)$ that
\[
A_u \chi_u A_u^{(-1)} 
= D D^{(-1)} \chi_u = |H| \chi_u = \tfrac{|H \times E_1|}{2}\chi_u \quad \mbox{for $u \in \{0,1\}$},
\]
as required.
\end{proof}

\begin{cor}
\label{cor:HDSbyC2}
Suppose $H \in {\cal H}$. Let $G$ be a group containing a normal subgroup $E_1 \cong C_2$, and containing $H \times E_1$ as a subgroup of index~$2$. Then $G \in {\cal H}$.
\end{cor}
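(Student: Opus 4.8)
The plan is to deduce this directly from \cref{prop:HDSbyC2} and \cref{thm:prehadamard}, applied with $r=1$; there is essentially no new work, only a matter of checking that the hypotheses line up.

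First I would use the assumption $H \in {\cal H}$ to fix a Hadamard difference set $D$ in $H$. By \cref{prop:HDSbyC2}, the multiset $\{D,D\}$ is then a signature set on $K := H \times E_1$ with respect to its subgroup $E_1 \cong C_2$. This produces the object that \cref{thm:prehadamard} requires as input.

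Next I would verify the three hypotheses of \cref{thm:prehadamard} with $E = E_1$ (so $r=1$) and with $K = H \times E_1$ as above: the subgroup $E_1$ is normal in $G$ by assumption; the subgroup $K = H \times E_1$ has index $2^r = 2$ in $G$, hence is normal in $G$, and it plainly contains $E_1$; and a signature set on $K$ with respect to $E_1$ exists by the previous paragraph. \cref{thm:prehadamard} then gives $G \in {\cal H}$, completing the proof.

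The only point demanding a moment's care, and the closest thing to an ``obstacle'', is keeping straight the two roles of $E_1$: the copy of $C_2$ inside $K = H \times E_1$ on which the signature set is supported must be identified with the normal subgroup $E_1$ of $G$ named in the statement. Since $K$ is chosen to be precisely $H \times E_1$ for that given $E_1$, this identification is automatic, and no genuine difficulty arises.
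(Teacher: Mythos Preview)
Your proposal is correct and matches the paper's own proof essentially line for line: apply \cref{prop:HDSbyC2} to obtain a signature set on $H \times E_1$ with respect to $E_1$, note that both $E_1$ and $H \times E_1$ are normal in $G$ (the latter because it has index~$2$), and invoke \cref{thm:prehadamard}. The paper's version is simply more terse.
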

\begin{proof}
By \cref{prop:HDSbyC2}, there exists a signature set on $H \times E_1$ with respect to~$E_1$. Since $E_1$ and $H \times E_1$ are both normal in $G$, we have $G \in {\cal H}$ by \cref{thm:prehadamard}.
\end{proof}

The technique of constructing Hadamard difference sets from signature sets on nonabelian groups appears to have significant potential, but we do not currently have a method of producing such signature sets that is as powerful as the recursive construction used to prove \cref{thm:abelian-sigset} for abelian groups.

%%%%%%%%%%%%%%%%%%%%%%%%%%
\subsection{Product of Perfect Ternary Arrays}
\label{subsec:productPTA}

Our second construction method relies on a key feature of the proof of \cref{prop:sigsetQ}, namely the existence of a $\{+1, 0, -1\}$-valued function~$A$ on the group $Q$ satisfying $AA^{(-1)} = 4$ in $\Z Q$. This function $A$ is also $\{\pm 1\}$-valued on a set of coset representatives for a subgroup of~$Q$, but we do not require this additional structure in the following definition.

\begin{defn}
\label{defn:pta}
Let $G$ be a group.
A \emph{perfect ternary array} in $G$ is a $\{+1, 0, -1\}$-valued function $T$ on $G$ satisfying $TT^{(-1)} = c$ in $\Z G$ for some integer~$c$. 
\end{defn}

The set of elements of a group $G$ on which a group ring element $A \in \Z G$ is nonzero is the \emph{support} of~$A$; the size of this set is the \emph{weight} of~$A$, written $\wt(A)$.
We firstly show that the integer $c$ in \cref{defn:pta} is equal to the weight of the perfect ternary array, and that it is a square.

\begin{lemma}
\label{lem:PTAwt}
Let $G$ be a group, and suppose $T = \displaystyle{\sum_{g \in G} t_g g}$ is a perfect ternary array where each $t_g \in \{+1, 0, -1\}$.
Then $TT^{(-1)} = \wt(T) = \displaystyle{\Big(\sum_{g \in G} t_g\Big)^2}$.
\end{lemma}

\begin{proof}
For some integer $c$, we have
\[
c = TT^{(-1)} 
  = \Big(\sum_{h \in G} t_h h\Big) \Big(\sum_{g \in G} t_g g^{-1} \Big) 
  = \sum_{k \in G} \Big( \sum_{g \in G} t_{kg} t_g \Big) k
\]
by writing $k = hg^{-1}$. Comparison of the coefficients of $1_G$ and $k \ne 1_G$
gives
\begin{align}
c &= \sum_{g \in G} t_g^2, \label{eqn:ctg2} \\
0 &= \sum_{g \in G} t_{kg} t_g \quad \mbox{for $k \ne 1_G$}. \nonumber
\end{align}
These relations together give
\[
c = \sum_{k \in G} \sum_{g \in G} t_{kg} t_g 
  = \sum_{g \in G} \Big(\sum_{h \in G} t_h\Big) t_g 
  = \Big(\sum_{g \in G} t_g\Big)^2.
\]
The result follows by combining with \eqref{eqn:ctg2}, noting that
$\sum_{g\in G}t_g^2 = \wt(T)$ because $T$ is $\{+1,0,-1\}$-valued.
\end{proof}
\noindent
By \cref{lem:PTAwt} and \eqref{eqn:DDstar}, we may regard a Hadamard difference set in a group $G$ as a perfect ternary array $T$ in $G$ for which $TT^{(-1)} = |G|$.
A survey of results on the matrix representation of a perfect ternary array in an abelian group is given in~\cite{PTAs1999}. 
We next give two examples of perfect ternary arrays of weight~$4$, whose properties can be verified by direct calculation. The second example appears in the proof of \cref{prop:sigsetQ}.

\begin{example}[Dillon 1990 (unpublished)]
\label{ex:pta-mod2}
\begin{enumerate}[$(i)$]
\item
Suppose $G$ is a group containing a nonidentity element~$x$ and an involution (element of order~$2$) $y$ that commutes with~$x$.
Then $T = 1-x-y-xy$ is a perfect ternary array of weight~$4$ in~$G$.

\item
Let $Q = \langle x,y : x^4=y^4=1, \, yxy^{-1} = x^{-1}, \, x^2=y^2 \rangle$ be the quaternion group of order~$8$. Then $T= 1-x-y-xy$ is a perfect ternary array of weight~$4$ in~$Q$.
\end{enumerate}
\end{example}
\noindent
Every perfect ternary array of weight~$4$ in a group of even order is equivalent to \cref{ex:pta-mod2}~$(i)$ or~$(ii)$~\cite[Lemma~2]{bhattsmith}. 

We now construct a Hadamard difference set as a product of perfect ternary arrays.

\begin{prop}[Dillon 1990 (unpublished), Bhattacharya and Smith \cite{bhattsmith}]
\label{prop:productPTAgivesDS}
Let $T_1, T_2, \dots, T_s$ be subsets of a group $G$, and let $D = \prod_{i=1}^s T_i$. Suppose that
\begin{enumerate}[$(i)$]
\item
each $T_i$ is a perfect ternary array in $G$,
\item
$\wt(D) = \prod_{i=1}^s \wt(T_i)$,
\item
$\wt(D) = |G|$.
\end{enumerate}
Then $D$ corresponds to a Hadamard difference set in~$G$. 
\end{prop}

\begin{proof}
By condition $(ii)$, $D$ is a $\{+1, 0, -1\}$-valued function on~$G$.
Now $DD^{(-1)} = \prod_{i=1}^s \wt(T_i)$ by \cref{lem:PTAwt}, 
and then by conditions $(ii)$ and $(iii)$ we have $DD^{(-1)} = |G|$.
\end{proof}
\noindent
Since a Hadamard difference set is a special case of a perfect ternary array, we may regard \cref{thm:productconstruction} as constructing a Hadamard difference set in $G$ as the product $D_1 D_2$ of two perfect ternary arrays $D_1$ and $D_2$ contained in subgroups $H_1$ and $H_2$ of~$G$. 
In contrast, \cref{prop:productPTAgivesDS} constructs Hadamard difference sets as the product of $s$ perfect ternary arrays $T_i$, with the important relaxation that each $T_i$ need not be structurally constrained in relation to a subgroup of~$G$.

This generality gives \cref{prop:productPTAgivesDS} considerable power.
We take each $T_i$ to be either a perfect ternary array of weight~$4$ (having one of the two forms of \cref{ex:pta-mod2}), or else a Hadamard difference set in a subgroup of~$G$.
This allows us to construct all 27 inequivalent difference sets in the 12 groups of order 16 contained in ${\cal H}$ \cite{bhattsmith}; a difference set in 17 of the 22 remaining non-excluded groups of order~64; 
and a difference set in 1,358 of the 1,416 remaining non-excluded groups of order~256 (see \cref{tab:2-groups}). 
However, the same generality means that testing whether a group $G$ lies in ${\cal H}$ because of \cref{prop:productPTAgivesDS} (involving a computer search over all suitable perfect ternary arrays) is significantly slower than testing whether $G$ lies in~${\cal H}$ because of \cref{thm:main} (involving simply testing whether $G$ contains a suitable normal abelian subgroup); see \cref{sec:verification} for further details.

%%%%%%%%%%%%%%%%%%%%%%%%%%
\subsection{Transfer Methods}
\label{subsec:transfer}

The construction methods of previous sections are collectively sufficient to demonstrate that the great majority of the groups of order 64 and 256 that are not excluded by Theorems~\ref{thm:turyn} and~\ref{thm:dillon} belong to~${\cal H}$. The key in almost all of these demonstrations is the existence of a signature set on a normal subgroup, from which a difference set arises using \cref{thm:prehadamard}.
Nonetheless, while the signature set concept is very powerful, it does not appear to be sufficient to determine ${\cal H}$ completely. The reason is that some groups (2 of order 64, and 10 of order 256) have the property that each of their normal subgroups also occurs as a normal subgroup of a group that is not in~${\cal H}$. 
We therefore require construction methods that do not rely on a signature set. We now describe three such methods, each of which uses a difference set in one group to discover a difference set in another (and so ``transfers'' a difference set between the two groups).

The first transfer method makes use of the equivalence between
a difference set in a group $G$ and a symmetric design on whose points $G$ acts as a regular (sharply transitive) automorphism group.
If the full automorphism group of the design is sufficiently large, it may well contain other subgroups which also act regularly on the points of the design; in this case, each of these subgroups also contains a difference set.
For example, the group $C_2^4$ contains a difference set giving a $(16,6,2)$ symmetric design whose 2-rank is~$6$, and the automorphism group of this design contains 12 nonisomorphic subgroups of order 16 acting regularly on the points of the design. We thereby transfer a single difference set in $C_2^4$ to a difference set in all 11 of the other Hadamard groups of order~16.
Similarly, the group $C_2^6$ contains a difference set giving a $(64,28,12)$ symmetric design whose 2-rank is $8$, and the automorphism group of this design contains 171 nonisomorphic subgroups of order 64 acting regularly on the points of the design. We thereby transfer a single difference set in $C_2^6$ to 170 of the other 258 Hadamard groups of order~64.

The second transfer method applies when a difference set gives an algebraic structure in the group ring that also exists in other group rings.
An example is Dillon's proof \cite{dillon} of \cref{thm:dillon}, which transfers a putative difference set in a group with a large dihedral quotient to a difference set in a group with a large cyclic quotient in order to apply the nonexistence result of \cref{thm:turyn}.
A second example is \cref{thm:prehadamard}, which can be viewed as using \cref{thm:drisko} to transfer a difference set in an abelian group that contains $K$ to a difference set in a variety of nonabelian groups containing $K$.
A third example is \cite[Thm.~2]{dillon2groups}, which transfers a difference set among groups sharing a subgroup $H$ of index~$2$ and a central element $g$ not in~$H$.
In general, suppose that a group $G$ is known to contain a difference set $D$, and that $G$ contains a large normal subgroup~$K$. Let $\{g_u\}$ be a set of coset representatives for $K$ in $G$, and partition the elements of $D$ according to their membership of the cosets of $K$ to write $D = \sum_u g_u D_u$, where each $D_u \in \Z K$.
Now let $G'$ be a group having the same order as $G$ and containing a normal subgroup $K'$ isomorphic to $K$. Let $\phi$ be an isomorphism from $K$ to $K'$. 
To transfer the difference set $D$ from $G$ to $G'$ we seek, by hand or by computer search, a set of coset representatives $\{g'_u\}$ for $K'$ in $G'$ for which $\sum_u g'_u \phi(D_u)$ is a difference set in~$G'$. 

The third transfer method takes advantage of the structure created by the GAP method for labelling group elements.
A difference set $D$ with parameters $(v,k,\lambda)$ in a group $G$ of order $v$ can be represented in GAP as a $k$-subset $S$ of the element labels $\{1,2,\dots,v\}$.
Given such a subset $S$ representing a difference set in $G$, we test in GAP whether the same subset $S$ also represents a difference set in another group $G'$ of order~$v$. This method appears to have a greater chance of success when the GAP numbering for $G'$ is close to that of $G$, which often occurs when $G'$ has similar structure to~$G$.

None of these three transfer methods is systematic, and it is not yet clear when they can be expected to succeed. Nonetheless, we were able to apply them to show that all but one of the remaining 5 non-excluded groups of order 64, and all but one of the remaining 58 non-excluded groups of order 256, belong to ${\cal H}$ (see \cref{tab:2-groups}). 
We construct a difference set in the final group of order 64 and of order 256 in \cref{subsec:final}.

%%%%%%%%%%%%%%%%%%%%%%%%%%
\subsection{The Final Group of Order 64 and of Order 256}
\label{subsec:final}

The final two groups whose membership in ${\cal H}$ we wish to demonstrate are the order 64 modular group 
\[
M_{64} = C_{32} \rtimes_{17} C_2 = \langle x,y : x^{32} = y^2 = 1, \, yxy^{-1} = x^{17} \rangle,
\]
and the order 256 group 
\[
C_{64} \rtimes_{47} C_4 = \langle x, y : x^{64}=y^4=1, \, yxy^{-1}=x^{47}\rangle
\]
that is referenced in \cite{gap} as SmallGroup$(256,536)$.
These nonabelian groups are each a cyclic extension of a cyclic group, and have small center and high exponent.
Historically, they were the last groups of their order whose membership in ${\cal H}$ was determined: $M_{64}$ in 1991~\cite{smithliebler}, and SmallGroup$(256,536)$ in 2016~\cite{yolland}. 

We firstly describe the original construction method used in \cite{smithliebler} and~\cite{yolland}, which strengthens the representation theory approach used in \cite[Sect.~5]{davis-smith94} to construct a difference set in the order 256 group $C_{64} \rtimes_{33} C_4 = \langle x,y : x^{64}=y^4=1, \, yxy^{-1}=x^{33} \rangle$.
We shall then reinterpret these constructions as arising from a modification of a signature set. 

\begin{prop}
\label{prop:original-final}
Let $G$ be a $2$-group, let $g$ be a central involution in $G$, and
let $\natural$ be the natural map from $G$ onto~$G/\langle g \rangle$.
Suppose there are $\{+1,0,-1\}$-valued functions $D_0, D_1$ on $G$ for which $D_0(1+g)$ and $D_1(1-g)$ have disjoint supports whose union is~$G$, and for which
\begin{align}
\natural(D_0) \natural(D_0)^{(-1)} 
 &= \tfrac{|G|}{4} \quad \mbox{in $\Z(G/\langle g \rangle)$}, \label{eqn:D0D0} \\
D_1(1-g)D_1^{(-1)} 
 &= \tfrac{|G|}{4}(1-g) \quad \mbox{in $\Z G$}. \label{eqn:D1D1} 
\end{align}
Then $G \in {\cal H}$.
\end{prop}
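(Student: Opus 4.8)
The plan is to verify that
\[
D := D_0(1+g) + D_1(1-g)
\]
is a $\{\pm1\}$-valued function on $G$ satisfying $DD^{(-1)} = |G|$ in $\Z G$; by \eqref{eqn:DDstar} this means $D$ corresponds to a Hadamard difference set in~$G$, so $G \in {\cal H}$. The candidate $D$ is forced by the hypotheses: since $D_0(1+g)$ and $D_1(1-g)$ have disjoint supports whose union is $G$, at each $k \in G$ exactly one summand is nonzero, so $D$ has a nonzero integer coefficient at every element of $G$, i.e.\ $\mathrm{supp}(D) = G$. I will not try to check directly that these coefficients lie in $\{\pm1\}$; that will follow at the end from $DD^{(-1)} = |G|$.

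First I would expand $DD^{(-1)}$ in $\Z G$. Since $g$ is central with $g^2 = 1$, we have $(1+g)^{(-1)} = 1+g$, $(1-g)^{(-1)} = 1-g$, $(1\pm g)^2 = 2(1\pm g)$, and $(1+g)(1-g) = 0$, so the two cross terms vanish and
\[
DD^{(-1)} = 2\,D_0(1+g)D_0^{(-1)} + 2\,D_1(1-g)D_1^{(-1)}.
\]
The second summand is immediately $\tfrac{|G|}{2}(1-g)$ by \eqref{eqn:D1D1}.

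The one step needing care is to evaluate $A := D_0(1+g)D_0^{(-1)}$, since the relevant hypothesis \eqref{eqn:D0D0} lives in $\Z(G/\langle g\rangle)$ rather than in $\Z G$. Because $g$ is central and $g^2 = 1$, one checks $Ag = A$, so $A$ has equal coefficients on $k$ and $kg$ for every $k \in G$. The natural map $\natural \colon \Z G \to \Z(G/\langle g\rangle)$ is a ring homomorphism that commutes with the $(-1)$-operation, and $\natural(1+g) = 2\cdot 1_{G/\langle g\rangle}$, so \eqref{eqn:D0D0} gives $\natural(A) = 2\,\natural(D_0)\natural(D_0)^{(-1)} = \tfrac{|G|}{2}\cdot 1_{G/\langle g\rangle}$. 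On the other hand, $\tfrac{|G|}{4}(1+g)$ is also fixed under right multiplication by $g$ and has this same image under $\natural$; since $\natural$ is injective on the set of elements of $\Z G$ fixed by right multiplication by $g$ (if $Ag = A$ and $\natural(A) = 0$, then the two coefficients of $A$ on each coset $\{k,kg\}$ are equal and sum to $0$, hence vanish), we conclude $A = \tfrac{|G|}{4}(1+g)$.

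Combining the pieces,
\[
DD^{(-1)} = 2\cdot\tfrac{|G|}{4}(1+g) + \tfrac{|G|}{2}(1-g) = \tfrac{|G|}{2}\big[(1+g) + (1-g)\big] = |G|
\]
in $\Z G$. In particular the identity coefficient of $DD^{(-1)}$ is $\sum_{k \in G} D(k)^2 = |G|$; since $D$ has exactly $|G|$ nonzero integer coefficients, each must be $\pm1$. Hence $D$ is a $\{\pm1\}$-valued function on $G$ with $DD^{(-1)} = |G|$, so $G \in {\cal H}$. The only genuine subtlety is the pull-back reconciling \eqref{eqn:D0D0} with a computation in $\Z G$; this can be avoided by instead evaluating each complex irreducible representation $\rho$ of $G$ at $D$: since $g$ is central, $\rho(g) = \pm I$; when $\rho(g) = I$ the representation factors through $G/\langle g\rangle$ and \eqref{eqn:D0D0} yields $\rho(D)\rho(D)^* = |G|I$, while when $\rho(g) = -I$ the same follows from \eqref{eqn:D1D1}, whence $DD^{(-1)} = |G|$ again.
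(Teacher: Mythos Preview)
Your proof is correct and follows essentially the same route as the paper: define $D = D_0(1+g) + D_1(1-g)$, expand $DD^{(-1)}$ using $(1+g)(1-g)=0$ and $(1\pm g)^2 = 2(1\pm g)$, evaluate the two surviving terms via the hypotheses (lifting \eqref{eqn:D0D0} from $\Z(G/\langle g\rangle)$ to $\Z G$ exactly as the paper does), and conclude $DD^{(-1)} = |G|$. Your deduction that $D$ is $\{\pm1\}$-valued from the identity coefficient of $DD^{(-1)}$ is in fact more careful than the paper, which simply asserts this from the support hypothesis; the closing representation-theoretic alternative is a nice bonus not present in the original.
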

\begin{proof}
We note that the existence of a central involution $g$ in the $2$-group $G$ follows from the class equation for finite groups. Let
\begin{equation}
\label{eqn:D-defn}
D = D_0(1+g)+D_1(1-g) \quad \mbox{in $\Z G$},
\end{equation}
which is a $\{\pm 1\}$-valued function on~$G$ by the assumption on the supports of $D_0(1+g)$ and~$D_1(1-g)$.

We now calculate
\begin{equation}
\label{eqn:original}
DD^{(-1)} = 2D_0(1+g)D_0^{(-1)} +2D_1(1-g)D_1^{(-1)} \quad \mbox{in $\Z G$}.
\end{equation}
By \eqref{eqn:D0D0}, in $\Z(G/\langle g \rangle)$ we have
\[
\tfrac{|G|}{4}1_{G/\langle g \rangle} = \natural(D_0) \natural(D_0)^{(-1)} 
 = (D_0 \langle g \rangle)(D_0^{(-1)} \langle g \rangle) = D_0 D_0^{(-1)} \langle g \rangle,
\]
so that in $\Z G$ we have
\[
\tfrac{|G|}{4}(1+g) = D_0 D_0^{(-1)} (1+g) = D_0 (1+g) D_0^{(-1)}
\]
because $g$ is central in~$G$.
Substitute this and \eqref{eqn:D1D1} into \eqref{eqn:original} to obtain
\[
DD^{(-1)} = \tfrac{|G|}{2}(1+g) + \tfrac{|G|}{2}(1-g) = |G|.
\]
Therefore $D$ corresponds to a Hadamard difference set in~$G$.
\end{proof}
\noindent
When applying \cref{prop:original-final}, we firstly seek a $\{+1,0,-1\}$-valued group ring element $D_0$ satisfying condition~\eqref{eqn:D0D0}, namely that $\natural(D_0)$ is a perfect ternary array of weight $\tfrac{|G|}{4}$ in the factor group~$G/\langle g\rangle$. We then seek a $\{+1, 0, -1\}$-valued group ring element~$D_1$ satisfying~\eqref{eqn:D1D1} for which $D_0(1+g)$ and $D_1(1-g)$ have disjoint supports whose union is~$G$. 
It turns out that finding $D_0$ is relatively easy, whereas finding $D_1$ is much more difficult.

\begin{example}[Liebler and Smith construction for $M_{64}$ \cite{smithliebler}] 
\label{ex:modular} 
We apply \cref{prop:original-final} to construct a Hadamard difference set in 
$M_{64} = C_{32} \rtimes_{17} C_2 = \langle x,y : x^{32} = y^2 = 1,\ yxy^{-1} = x^{17} \rangle$. 
The center of $M_{64}$ is $\langle x^2\rangle$, so $x^{16}$ is a central involution. 

A $\{+1,0,-1\}$-valued group ring element $D_0$ satisfying
\[
\natural(D_0) \natural(D_0)^{(-1)} = 16 \quad \mbox{in $\Z(M_{64}/\langle x^{16} \rangle)$}
\]
is given by 
\[ 
D_0 = A_{00}(1+y) + A_{01}(1-y),
\]
where
\begin{align*}
A_{00} &= -x^7 (1+x^8)+(1-x^8), \\
A_{01} &= x(1+x^8)+x^4(1-x^8). 
\end{align*}
This was easily found by hand, because the factor group $M_{64}/\langle x^{16} \rangle$ is isomorphic to the abelian group $C_{16} \times C_2$.

A $\{+1,0,-1\}$-valued group ring element $D_1$ satisfying
\[
D_1(1-x^{16})D_1^{(-1)} = 16(1-x^{16}) \quad \mbox{in $\Z M_{64}$}
\]
is given by
\[ 
D_1 = A_{10}(1+y) +A_{11}(1-y),
\]
where
\begin{align*}
A_{10} &= (x^6-x^5)(1-x^8),       \\
A_{11} &= (x^2+x^3)(1+x^8).
\end{align*}
This was found by hand using the irreducible representations induced by the character (homomorphism) that maps $x^{16}$ to~$-1$.

Now $D_0(1+x^{16})$ has support $(1+x+x^4+x^7)\langle x^8, y \rangle$,
and $D_1(1-x^{16})$ has support $(x^2+x^3+x^5+x^6) \langle x^8, y \rangle$.
These supports are disjoint and their union is~$M_{64}$.
We conclude from the construction of \cref{prop:original-final} that 
$D= D_0(1+x^{16})+D_1(1-x^{16})$ corresponds to a difference set in~$M_{64}$.
\end{example}
 
\begin{example}[Yolland construction for SmallGroup$(256,536)$ \cite{yolland}]
\label{ex:yolland}
We apply \cref{prop:original-final} to construct a Hadamard difference set in 
$G = C_{64} \rtimes_{47} C_4 = \langle x, y : x^{64}=y^4=1, \,  yxy^{-1}=x^{47} \rangle$. 
The center of $G$ is $\langle x^{32} \rangle$, so $x^{32}$ is a central involution. 

A $\{+1,0,-1\}$-valued group ring element $D_0$ satisfying
\[
\natural(D_0) \natural(D_0)^{(-1)} = 64 \quad \mbox{in $\Z(G/\langle x^{32} \rangle)$}
\]
is given by 
\[ 
D_0 = A_{00}(1+y^2)+A_{01}(1-y^2), 
\]
where
\begin{align*}
A_{00} &= \big( (1-x^8)-x^2(1+x^8) \big) (1+x^{16})+(x^5+x^6y)(1+x^8)(1-x^{16}), \\
A_{01} &= \big( (1-x^8)-x^5(1+x^8) \big) y(1+x^{16})+\big( -x^3(1-x^8)y +x^3(1+x^8) \big)(1-x^{16}).
\end{align*}
This was found by hand by seeking a perfect ternary array of weight~$64$ in the nonabelian factor group 
$G/\langle x^{32} \rangle \cong C_{32} \rtimes_{15} C_4$.

A $\{+1,0,-1\}$-valued group ring element $D_1$ satisfying
\[
D_1(1-x^{32})D_1^{(-1)} = 64(1-x^{32}) \quad \mbox{in $\Z G$}
\]
is given by
\[ 
D_1 = A_{10}(1+y^2)+A_{11}(1-y^2),
\]
where
\begin{align*}
A_{10} &= -\big((x+x^4+x^9+x^{12}+x^{14})(1+x^{16})+(x^6+x^7-x^{15})(1-x^{16})\big),\\
A_{11} &= -\big((x-x^9+x^{10})(1+x^{16})+(x^2+x^4-x^7+x^{12}-x^{15})(1-x^{16})\big)y.
\end{align*}
This was found by a difficult computer search.
Although a naive search for $D_1$ involves a search space of size $2^{64}$,
the search was shortened by using the irreducible representations induced by the character (homomorphism) that maps $x^{32}$ to $-1$, and by making some simplifying assumptions about the structure of the target difference set~\cite{yolland}.

Now $D_0(1+x^{32})$ has support 
$\big(1+x^2+x^3+x^5+(1+x^3+x^5+x^6)y\big)\langle x^{8}, y^2 \rangle$, 
and $D_1(1-x^{32})$ has support
$\big(x+x^4+x^6+x^7+(x+x^2+x^4+x^7)y\big) \langle x^{8}, y^2 \rangle$.
These supports are disjoint and their union is~$G$.
We conclude from the construction of \cref{prop:original-final} that 
$D= D_0(1+x^{32})+D_1(1-x^{32})$ corresponds to a difference set in~$G$.
\end{example}

We now reinterpret Examples~\ref{ex:modular} and~\ref{ex:yolland} as arising from a modification of a signature set.
\begin{lemma}
\label{lem:mod-sig}
Let $G$ be a group containing a normal subgroup $E \cong C_2^r$, and let $\{\chi_u: u \in U_r\}$ be the set of characters of~$E$. Let $A_u$ be a $\{+1,0,-1\}$-valued function on $G$ for each $u \in U_r$, where the $A_u$ have disjoint supports whose union is a set of coset representatives for $E_r$ in~$G$. Suppose that
\begin{equation}
\label{eqn:G22r}
\sum_{u \in U_r} A_u \chi_u A_u^{(-1)} = \frac{|G|}{2^{r}} \quad \mbox{in $\Z G$}.
\end{equation}
Then $G \in {\cal H}$.
\end{lemma}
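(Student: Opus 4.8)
The plan is to follow the template of \cref{thm:prehadamard}, but to exploit the hypothesis~\eqref{eqn:G22r} directly in place of the orthogonality that a signature set together with \cref{thm:drisko} would otherwise supply. First I would set
\[
D = \sum_{u \in U_r} A_u \chi_u \quad \mbox{in $\Z G$}
\]
and check that $D$ is a genuine $\{\pm 1\}$-valued function on~$G$. Writing $A_u = \sum_{g \in S_u} \epsilon_{u,g}\, g$ with each $\epsilon_{u,g} \in \{\pm 1\}$ and $S_u$ the support of~$A_u$, we have $A_u \chi_u = \sum_{g \in S_u}\sum_{e \in E} \epsilon_{u,g}\,\chi_u(e)\, ge$; since the elements of $S_u$ lie in distinct cosets of~$E$, the products $ge$ are pairwise distinct and this is a $\{\pm 1\}$-valued function supported exactly on $\bigcup_{g \in S_u} gE$. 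By hypothesis the supports $S_u$ are pairwise disjoint and their union is a full transversal of $E$ in~$G$, so the sets $\bigcup_{g \in S_u} gE$ partition~$G$; hence $D$ is $\{\pm 1\}$-valued on all of~$G$.

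Next I would compute $DD^{(-1)}$. Since $\chi_v^{(-1)} = \chi_v$, we obtain
\[
DD^{(-1)} = \sum_{u,v \in U_r} A_u \chi_u \chi_v^{(-1)} A_v^{(-1)} \quad \mbox{in $\Z G$}.
\]
Each product $\chi_u \chi_v^{(-1)}$ is computed inside the subring $\Z E$ of $\Z G$ (as $E$ is a subgroup of~$G$), so \cref{prop:orthogonality}~$(i)$ applies verbatim: the terms with $u \ne v$ vanish, and each term with $u = v$ equals $A_u(2^r \chi_u)A_u^{(-1)} = 2^r A_u \chi_u A_u^{(-1)}$. Therefore
\[
DD^{(-1)} = 2^r \sum_{u \in U_r} A_u \chi_u A_u^{(-1)} = 2^r \cdot \frac{|G|}{2^r} = |G|
\]
by~\eqref{eqn:G22r}, and \eqref{eqn:DDstar} then shows that $D$ corresponds to a Hadamard difference set in~$G$, so $G \in {\cal H}$.

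The argument is essentially bookkeeping, and the only step needing care is the verification that $D$ is $\{\pm 1\}$-valued, which is precisely where the disjoint-support and covering hypotheses on the $A_u$ enter; everything else is immediate from character orthogonality, which is unaffected by the nonabelianness of~$G$ because the relevant products live in~$\Z E$. I would emphasize in the write-up that, in contrast to \cref{thm:prehadamard}, no appeal to \cref{thm:drisko} is needed here: condition~\eqref{eqn:G22r} already packages the summed orthogonality relation that permuting the characters would otherwise produce. The trade-off, and the real difficulty in applying \cref{lem:mod-sig}, is shifted to exhibiting group ring elements $A_u$ satisfying the single delicate equation~\eqref{eqn:G22r}, which is the content of Examples~\ref{ex:modular} and~\ref{ex:yolland}.
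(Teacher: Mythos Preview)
Your proof is correct and follows exactly the same approach as the paper: define $D=\sum_{u}A_u\chi_u$, verify it is $\{\pm1\}$-valued from the support hypothesis, and then compute $DD^{(-1)}=|G|$ via \cref{prop:orthogonality}~$(i)$ and~\eqref{eqn:G22r}. The paper's proof is simply a terser version of what you wrote, omitting the detailed support check and the commentary comparing with \cref{thm:prehadamard}.
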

\begin{proof}
Let 
\[
D=\sum_{u \in U_r}A_u \chi_u \quad \mbox{in $\Z G$}, 
\]
which by the assumption on the supports of the $A_u$ is a $\{\pm 1\}$-valued function on~$G$. 
We calculate $DD^{(-1)} = |G|$ using \cref{prop:orthogonality}~$(i)$, and so $D$ corresponds to a Hadamard difference set in~$G$.
\end{proof}
\noindent
By \cref{prop:orthogonality}~$(ii)$, one way to achieve \eqref{eqn:G22r} in \cref{lem:mod-sig} would be for the $A_u$ to satisfy the condition in $\Z G$ that
\begin{equation}
\label{eqn:strict}
A_u \chi_u A_u^{(-1)} = \tfrac{|G|}{2^{2r}} \chi_u \quad \mbox{for each $u \in U_r$}.
\end{equation}
Such a set of $A_u$ would be similar, but not identical, to a signature set on $G$ with respect to $E$: the conditions on the supports in \cref{lem:mod-sig} are different from those in \cref{defn:sig}, and the constant in \eqref{eqn:strict} is $\tfrac{|G|}{2^{2r}}$ rather than~$\tfrac{|G|}{2^r}$.

A crucial observation in reinterpreting Examples~\ref{ex:modular} and~\ref{ex:yolland} is that a weaker condition than \eqref{eqn:strict} suffices. In particular, in the case $r=2$, this condition can be weakened to
\begin{align}
A_{0j} \chi_{0j} A_{0j}^{(-1)} 
 &= \tfrac{|G|}{16} \chi_{0j} \quad \mbox{for each $j \in \{0,1\}$},  	\label{eqn:weak1} \\
A_{10} \chi_{10} A_{10}^{(-1)} + A_{11} \chi_{11} A_{11}^{(-1)} 	\label{eqn:weak2}
 &= \tfrac{|G|}{16} (\chi_{10}+\chi_{11}), 
\end{align}
in which the expressions $A_{10} \chi_{10} A_{10}^{(-1)}$ and $A_{11} \chi_{11} A_{11}^{(-1)}$
behave like a ``complementary pair'' whose sum is the same as if \eqref{eqn:strict} held.

In \cref{ex:modular}, the group $M_{64}$ contains the normal subgroup
$E_2=\langle x^{16}, y\rangle \cong C_2^2$ 
whose characters are
\[
\chi_{ij} = \big(1+(-1)^i x^{16}\big) \big(1+(-1)^j y\big) \quad \mbox{for $(i,j) \in U_2$}.
\]
The difference set $D$ takes the form
\[
D = D_0(1+x^{16}) + D_1(1-x^{16}) = \sum_{(i,j)\in U_2} A_{ij} \chi_{ij} 
\]
where the $A_{ij}$ take the values specified in the example.
These $A_{ij}$ satisfy the conditions of \cref{lem:mod-sig} on their supports. 
Since conjugation by $x$ fixes $\chi_{00}$ and~$\chi_{01}$ but swaps $\chi_{10}$ and~$\chi_{11}$, we find by direct calculation that 
\[
A_{0j} \chi_{0j} A_{0j}^{(-1)} = 4 \chi_{0j} \quad \mbox{for each $j \in \{0,1\}$}
\]
and 
\begin{align*}
\lefteqn{A_{10}\chi_{10}A_{10}^{(-1)} + A_{11}\chi_{11}A_{11}^{(-1)} } \hspace{20mm} \\
 &= \Big( 2(1-x^{-1})\chi_{10} + 2(1-x)\chi_{11} \Big) + 
    \Big( 2(1+x^{-1})\chi_{10} + 2(1+x)\chi_{11} \Big)  \\
 &= 4 (\chi_{10}+\chi_{11}),
\end{align*}
so that \eqref{eqn:weak1} and \eqref{eqn:weak2} hold. 

The reinterpretation of \cref{ex:yolland} is similar. SmallGroup$(256,536)$ contains the normal subgroup $E_2=\langle x^{32},y^2\rangle \cong C_2^2$, whose characters are 
\[
\chi_{ij} = \big(1+(-1)^i x^{32}\big) \big(1+(-1)^j y^2\big) \quad \mbox{for $(i,j) \in U_2$}.
\]
The difference set $D$ takes the form
\[
D = D_0(1+x^{32}) + D_1(1-x^{32}) = \sum_{(i,j)\in U_2} A_{ij} \chi_{ij},
\]
where the $A_{ij}$ take the values specified in the example.
These $A_{ij}$ satisfy the conditions of \cref{lem:mod-sig} on their supports. 
Conjugation by $x$ fixes $\chi_{00}$ and~$\chi_{01}$ but swaps $\chi_{10}$ and~$\chi_{11}$, and we find once again (after a long calculation) that \eqref{eqn:weak1} and \eqref{eqn:weak2} hold.

%%%%%%%%%%%%%%%%%%%%%%%%%%
\subsection{Combination of Signature Sets and Perfect Ternary Arrays}
\label{subsec:combination}

The nonabelian signature set approach of \cref{subsec:nonabelian-sigset} and the perfect ternary array product construction of \cref{subsec:productPTA} are closely related.
For example, \cref{prop:HDSbyC2} may be interpreted as constructing a signature set on $H \times E_1$ from a perfect ternary array $D$ in $H$.
We now illustrate how a perfect ternary array in a factor group can be used to create a signature block with respect to a specific character.
We believe the illustrated method could be useful in future studies of the existence pattern for Hadamard difference sets in 2-groups of order greater than~$256$.

\begin{lemma}
\label{lem:PTAtoSS}
Let $K$ be a group containing a central subgroup $E \cong C_2^r$, and let $\chi$ be a character of~$E$.
Suppose that $\chi = H\chi'$ in $\Z E$ for some subgroup~$H$ of~$E$.
Let $\natural$ be the natural map from $K$ onto $K/H$, and suppose that $A$ is a $\{+1, 0,-1\}$-valued function on $K$ for which $\natural(A)$ is a perfect ternary array of weight $2^{2j}$ in~$K/H$. Then
\[
A \chi A^{(-1)} = 2^{2j}\chi \quad \mbox{in $\Z K$}. 
\]
\end{lemma}

\begin{proof}
Since $\natural(A)$ is a perfect ternary array of weight $2^{2j}$ in $K/H$, in $\Z(K/H)$ we have by \cref{lem:PTAwt} that
\[
2^{2j} 1_{K/H} = \natural(A) \natural(A)^{(-1)} 
= (AH)(A^{(-1)}H) = AA^{(-1)}H.
\]
For $k \in K$, interpret the element $kH$ in $K/H$ as $|H|$ elements in $K$, so that in the group ring $\Z K$ the above equation becomes
\[
2^{2j} H = AA^{(-1)} H.
\]
By assumption we have $\chi = H\chi'$, and $H$ and $\chi'$ are central in $K$ because $E$ is.
Therefore in $\Z K$ we have
\[
A\chi A^{(-1)}=AH\chi'A^{(-1)} = AA^{(-1)}H\chi' = 2^{2j}H\chi' = 2^{2j}\chi.
\]
\end{proof}
\noindent
In \cref{lem:PTAtoSS}, note that the group ring condition $\chi = H\chi'$ is equivalent to $H \in \Ker(\chi)$ when the character $\chi$ is considered as a homomorphism of~$E$.
Also note that if $E$ has index $2^{2j}$ in $K$,
and $A$ is $\{\pm 1\}$-valued on a set of coset representatives for $E$ in $K$, 
then the conclusion of \cref{lem:PTAtoSS} is that $A$ is a signature block on $K$ with respect to~$\chi$.

We now use \cref{lem:PTAtoSS} to explain the origin of the signature set introduced in \cref{ex:C82}.

\begin{example}
\label{ex:C82revisit}
Let $K = \langle X,Y \rangle \cong C_4^2$ and $E = \langle X^2, Y^2 \rangle \cong C_2^2$, and let $\{\chi_u: u \in U_2\}$ be the set of characters of~$E$.
We use \cref{lem:PTAtoSS} to construct the signature set 
\[
A_{00} = A_{01} = A_{10} = 1 + X + Y - X Y \quad \mbox{and} \quad A_{11} = 1 + X + Y + X Y
\]
on $K$ that was presented in \cref{ex:C82} without explanation of its origin.

For $\chi = \chi_{00}$ or $\chi_{10}$, take $H = \langle Y^2 \rangle$ and $A = 1-X-Y-XY$.
Then $\natural(A)$ is a perfect ternary array of weight $4$ in $K/H$ by \cref{ex:pta-mod2}~(i), because $\natural(Y)$ is an involution that commutes with the nonidentity element $\natural(X)$.
\cref{lem:PTAtoSS} then shows that $A$ is a signature block on $K$ with respect to~$\chi_{00}$ and~$\chi_{10}$. Since $A_{00} \chi_{00} = -XYA \chi_{00}$ and $A_{10} \chi_{10} = XA \chi_{10}$ in $\Z K$, it follows from \cref{defn:sig} and \cref{prop:orthogonality}~(i) that $A_{00} = A_{10}$ is a signature block on $K$ with respect to both $\chi_{00}$ and~$\chi_{10}$.
By symmetry in $X$ and $Y$, it follows that $A_{01}$ is also a signature block on $K$ with respect to~$\chi_{01}$.

For $\chi = \chi_{11}$, take $H = \langle X^2Y^2 \rangle$ and $A= 1+X+XY-X^2Y$. 
Then $\natural(A)$ is a perfect ternary array of weight $4$ in $K/H$ by \cref{ex:pta-mod2}~(i), because $\natural(XY)$ is an involution that commutes with the nonidentity element~$\natural(X)$.
By \cref{lem:PTAtoSS} and the relation $A_{11} \chi_{11} = A \chi_{11}$ in $\Z K$, 
we conclude that $A_{11}$ is a signature block on $K$ with respect to~$\chi_{11}$.
\end{example}

%%%%%%%%%%%%%%%%%%%%%%%%%%%%%%%%%%%%%%%%%%%%%%%%%%
\section{Computer Implementation for Groups of Order 256}
%%%%%%%%%%%%%%%%%%%%%%%%%%%%%%%%%%%%%%%%%%%%%%%%%%
\label{sec:verification}

In this section, we provide further details of the streamlined procedure used to establish that each of the 56,049 groups of order 256 not excluded by Theorems~\ref{thm:turyn} and~\ref{thm:dillon} belongs to ${\cal H}$. We then describe online databases containing difference sets found by this procedure, and explain how the overall result can be quickly verified on a desktop computer using the accepted GAP package \emph{DifSets}~\cite{peifer,difsets}. We note that \emph{DifSets} provides (via the LoadDifferenceSets command) a listing of all inequivalent difference sets in groups of order~$16$ and~$64$.

%%%%%%%%%%%%%%%%%%%%%%%%%%
\subsection{Procedure}
As previously summarized in \cref{sec:intro}, the streamlined procedure for groups of order 256 comprises three stages:
\begin{description}
\item[Stage 1.]
Use \cref{thm:main} to account for the 54,633 groups containing a normal subgroup isomorphic to $C_2^4$ or $C_4^2 \times C_2$ or $C_8^2$.

\item[Stage 2.]
Use the product construction of \cref{prop:productPTAgivesDS} to account for $1,358$ further groups.

\item[Stage 3.]
Apply the transfer methods of \cref{subsec:transfer} to account for 57 further groups, and the modified signature set method of \cref{subsec:final} to account for the final group. We do not describe this stage further.
\end{description}
\noindent
The relationship between the groups handled by Stages 1 and 2 is shown in \cref{fig:two256constructions}.

\begin{figure}[h]
\hspace{5.0em}
\begin{tikzpicture}
\draw[black,thick] (0,   1.6) rectangle (9.8,6.8);
\draw[blue,thick]  (3.9, 3.8) circle [radius=1.7];
\draw[red,thick]   (5.9, 3.8) circle [radius=1.7];
\node [align=center]      at (11.5,4.1) {56,049 non-excluded\\groups of order 256};
\node [align=center,blue] at (1.6, 5.9) {54,633 groups\\accounted for by\\\cref{thm:main}};
\node [align=center,red]  at (8.2, 5.9) {55,330 groups\\accounted for by\\\cref{prop:productPTAgivesDS}};
\node [align=center] at (3.2,3.85) {661};
\node [align=center] at (4.9,3.83) {53,972};
\node [align=center] at (6.6,3.85) {1,358};
\node [align=center] at (1.4,3.1) {58};
\end{tikzpicture}
\caption{\cref{thm:main} and~\cref{prop:productPTAgivesDS} show that at most 58 of the 56,049 non-excluded groups of order 256 lie outside~${\cal H}$.}
\label{fig:two256constructions}
\end{figure}

In Stage 1, we wish to construct a difference set in a group $G$ of order $256$ containing a normal abelian subgroup $K$, where $K$ is isomorphic to $C_2^4$ or $C_4^2 \times C_2$ or $C_8^2$.
A signature set on $K$ is provided trivially for the case $C_2^4$ (see the remark following \cref{defn:sig}),
by \cref{ex:sigset844} for the case $C_4^2 \times C_2$,
and by \cref{ex:rank2} for the case $C_8^2$.
We then apply the method in the proof of \cref{thm:prehadamard} to construct a difference set in $G$. This requires a set $\{g_u : u \in U_r\}$ of coset representatives for $K$ in $G$ satisfying \eqref{eqn:permute-chiu3}, namely
\[
\{g_u \chi_u g_u^{-1} : u \in U_r\} = \{\chi_u: u \in U_r\}.
\]
The existence of such a set is guaranteed by \cref{thm:drisko}, but the proof of this result in \cite{drisko} is non-constructive. We therefore conduct a search for a suitable set of coset representatives~$\{g_u\}$. This search is exhaustive for the cases $C_4^2 \times C_2$ and~$C_8^2$, but random for the case $C_2^4$ whose search space has size $15! > 10^{12}$.

The results of applying this search procedure to all 56,049 non-excluded groups, for each of the three choices of $K$ independently, are shown in \cref{fig:mainthm256constructions}.

\begin{figure}[h]
\hspace{4.0em}
\begin{tikzpicture}
\draw[black,thick] (0,   1.5) rectangle (11.5,9.2);
\draw[blue,thick]  (4.5, 5.7) circle [radius=2];
\draw[red,thick]   (6.5, 5.7) circle [radius=2];
\draw[mygreen,thick] (5.5, 3.9) circle [radius=2];
\node [align=center] at (13.2,5.5) {56,049 non-excluded\\groups of order 256};
\node [align=center,blue] at (2.0,8.2) {42,268 groups contain\\a normal subgroup\\isomorphic to $C_2^4$};
\node [align=center,red] at (9.5,8.2) {49,165 groups contain\\a normal subgroup\\isomorphic to $C_4^2 \times C_2$};
\node [align=center,mygreen] at (9.1,2.6) {684 groups contain\\a normal subgroup\\isomorphic to $C_8^2$};
\node [align=center] at (3.5,6.0) {5,132};
\node [align=center] at (5.5,6.4) {37,120};
\node [align=center] at (7.5,6.0) {11,697};
\node [align=center] at (5.5,5.05) {16};
\node [align=center] at (5.5,2.9) {336};
\node [align=center] at (4.3,4.4) {0};
\node [align=center] at (6.7,4.4) {332};
\node [align=center] at (1.5,3.4) {1,416};
\end{tikzpicture}
\caption{\cref{thm:main} shows that at most 1,416 of the 56,049 non-excluded groups of order 256 lie outside~${\cal H}$.}
\label{fig:mainthm256constructions}
\end{figure}
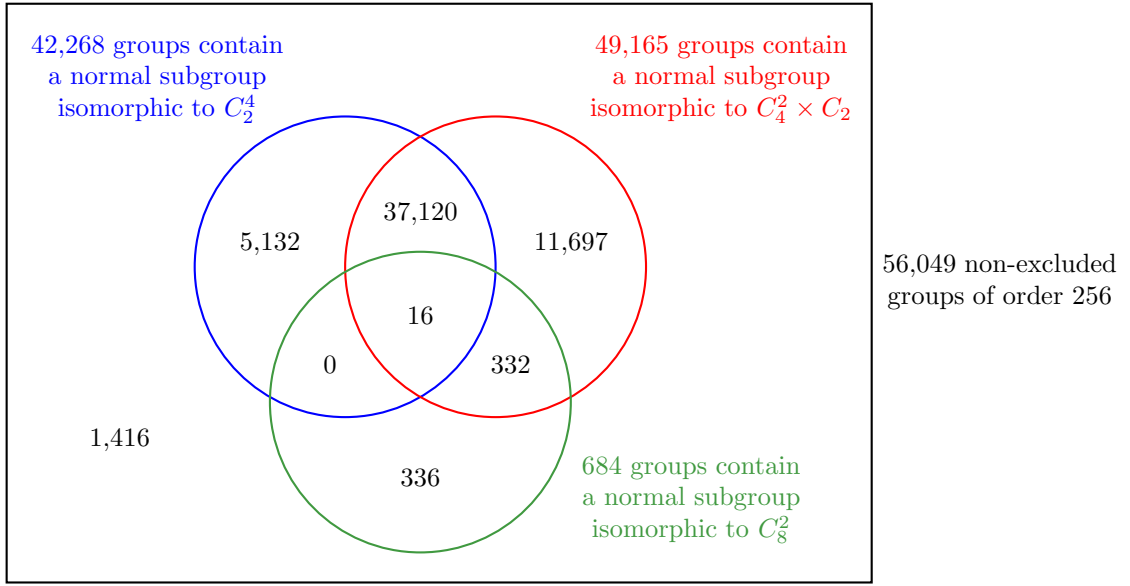

In Stage 2, we distinguish six instances of the product construction of \cref{prop:productPTAgivesDS} according to the form of its input perfect ternary arrays $T_1, T_2, \dots, T_s$.

\begin{enumerate}[$(i)$]
\item
$\mathbf{H_{64}\cdot Q_4}$ {\bf form}.
Take $T_1$ to be a Hadamard difference set in a subgroup $H_1$ of $G$ of order~$64$, and $T_2$ to be a perfect ternary array of weight $4$ in~$G$ having the form of \cref{ex:pta-mod2}~$(ii)$ where the quaternion group $Q = \langle x, y \rangle$ of order $8$ intersects $H_1$ in the two-element subgroup $\{1, x^2\}$.

\item
$\mathbf{H_{64}\cdot H_4}$ {\bf form}.
Take $T_1$ to be a Hadamard difference set in a subgroup $H_1$ of $G$ of order~$64$, and $T_2$ to be a Hadamard difference set in a subgroup $H_2$ of $G$ of order~$4$, where $G=H_1H_2$ and $H_1 \cap H_2 = 1$.

\item
$\mathbf{H_{16}\cdot H_{16}}$ {\bf form}.
For $i=1,2$, take $T_i$ to be a Hadamard difference set in a subgroup $H_i$ of $G$ of order~$16$, where $G=H_1H_2$ and $H_1 \cap H_2 = 1$.

\item
$\mathbf{H_{64}\cdot T_1}$ {\bf form}.
Take $T_1$ to be a perfect ternary array of weight $4$ in~$G$ having the form of \cref{ex:pta-mod2}~$(i)$, and $T_2$ to be a Hadamard difference set in a subgroup of $G$ of order~$64$.

\item
$\mathbf{H_{16}\cdot T_1 \cdot T_2}$ {\bf form}.
Take each of $T_1,T_2$ to be a perfect ternary array of weight $4$ in~$G$ having either of the two forms of \cref{ex:pta-mod2}, and $T_3$ to be a Hadamard difference set in a subgroup of $G$ of order~$16$.

\item
$\mathbf{T_1 \cdot T_2 \cdot T_3 \cdot T_4}$ {\bf form}.
Take each of $T_1, T_2, T_3, T_4$ to be a perfect ternary array of weight $4$ in~$G$ having either of the two forms of \cref{ex:pta-mod2}.

\end{enumerate}

For each of these six forms, we conduct a search for a suitable set of perfect ternary arrays satisfying all the required conditions. 
The search for the forms $(i)$ to $(iii)$ is relatively fast because the search is restricted to subgroups of the appropriate order. However, the search for the forms $(iv)$ to $(vi)$ is not constrained in this way and can take considerably longer; the search for form $(vi)$ sometimes requires more than a day for a single group.

We therefore begin by searching all $56,049$ non-excluded groups for each of the forms $(i)$ to $(iii)$ independently, with results as shown in \cref{fig:mainthm256constructions}. 
We then conduct a search for each of the forms $(iv)$ to $(vi)$ in that order, but only over those groups in which no previous form has been found. The number of groups accounted for and remaining at each step of Stage 2 is shown below.
\begin{center}
\begin{tabular}{|l|c|c|c|c|}
				 		 		 	   \hline
			& $H_{64} \cdot Q_4$ 		& $H_{64} \cdot T_1$ 	& $H_{16} \cdot T_1 \cdot T_2$	& $T_1 \cdot T_2 \cdot T_3 \cdot T_4$ \\ 
			& and $H_{64} \cdot H_4$ 	&			&				&	\\ 
			& and $H_{16} \cdot H_{16}$ 	&			&				&	\\ \hline
\# groups accounted for	& 51,957			& 3,119			& 236				& 18	\\ \hline
\# groups remaining 	& 4,092				& 973			& 737				& 719	\\ \hline
\end{tabular}
\end{center}
The Stage 2 searches are exhaustive in that none of the remaining 719 groups contains a difference set having one of the six forms $(i)$ to~$(vi)$.

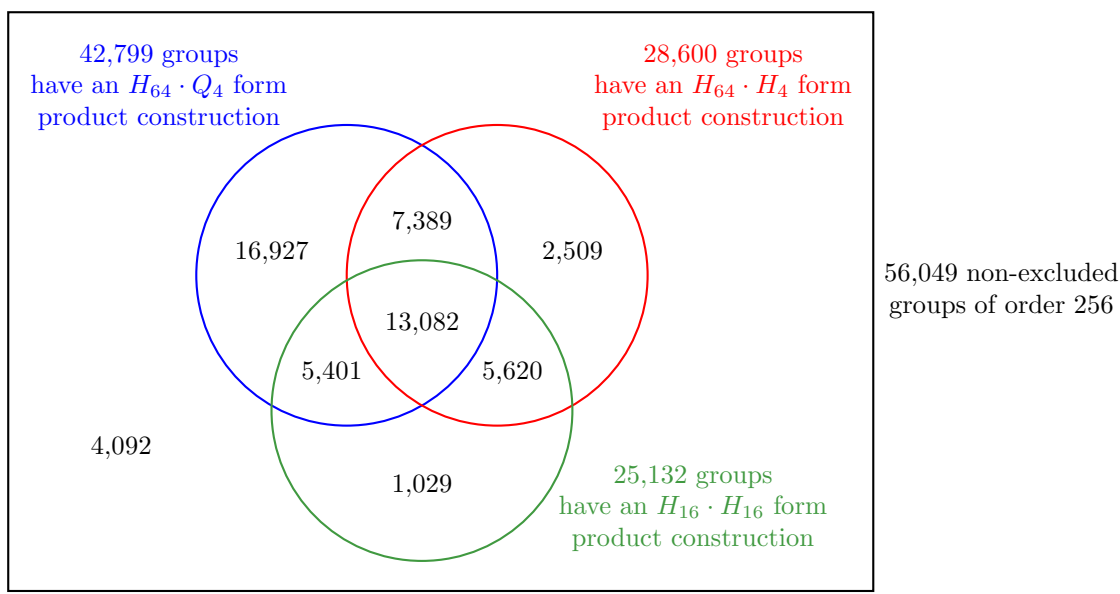
\begin{figure}[h]
\hspace{3.8em}
\begin{tikzpicture}
\draw[black,thick] (0,   1.5) rectangle (11.5,9.2);
\draw[blue,thick]  (4.5, 5.7) circle [radius=2];
\draw[red,thick]   (6.5, 5.7) circle [radius=2];
\draw[mygreen,thick] (5.5, 3.9) circle [radius=2];
\node [align=center] at (13.2,5.5) {56,049 non-excluded\\groups of order 256};
\node [align=center,blue] at (2.0,8.2) {42,799 groups\\have an $H_{64} \cdot Q_4$ form\\product construction};
\node [align=center,red] at (9.5,8.2) {28,600 groups\\have an $H_{64} \cdot H_4$ form\\product construction};
\node [align=center,mygreen] at (9.1,2.6) {25,132 groups\\have an $H_{16} \cdot H_{16}$ form\\product construction};
\node [align=center] at (3.5,6.0) {16,927};
\node [align=center] at (5.5,6.4) {7,389};
\node [align=center] at (7.5,6.0) {2,509};
\node [align=center] at (5.5,5.05) {13,082};
\node [align=center] at (5.5,2.9) {1,029};
\node [align=center] at (4.3,4.4) {5,401};
\node [align=center] at (6.7,4.4) {5,620};
\node [align=center] at (1.5,3.4) {4,092};
\end{tikzpicture}
\caption{Forms $H_{64} \cdot Q_4$ and $H_{64} \cdot H_4$ and $H_{16} \cdot H_{16}$ of \cref{prop:productPTAgivesDS} show that at most 4,092 of the 56,049 non-excluded groups of order 256 lie outside~${\cal H}$.}
\label{fig:product256constructions}
\end{figure}

%%%%%%%%%%%%%%%%%%%%%%%%%%
\subsection{Databases and Verification}
The website \cite{smith-online} contains ten lists in GAP format, organized into four databases as shown in \cref{tab:databases}.

\begin{table}[h]
\begin{center}
\begin{tabular}{|c|l|l|}
				 		 		 	   	   \hline
List 	& List name 			& Database name				\\ \hline
$L_1$	& HDS256\_Normal\_02x02x02x02	&					\\
$L_2$	& HDS256\_Normal\_04x04x02	& HDS256\_NormalSubgroupTransversal.txt \\
$L_3$	& HDS256\_Normal\_08x08		&					\\ \hline
$L_4$	& HDS256\_H64byQ4		&					\\
$L_5$	& HDS256\_H64byH4		& HDS256\_PTAProduct.txt 		\\
$L_6$	& HDS256\_H16byH16		&					\\ \hline
$L_7$	& HDS256\_H64byT1		&					\\
$L_8$	& HDS256\_H16byT1byT2		& HDS256\_SubgroupProduct.txt		\\
$L_9$	& HDS256\_T1byT2byT3byT4	&					\\ \hline
$L_{10}$& HDS256			& HDS256.txt 				\\ \hline
\end{tabular}
\end{center}
\caption{Organization of difference set databases in \cite{smith-online}.}
\label{tab:databases}
\end{table}

Lists $L_1$ to $L_3$ correspond to the three circles in \cref{fig:mainthm256constructions} (Stage 1).
Lists $L_4$ to $L_6$ correspond to the three circles in \cref{fig:product256constructions} (forms $(i)$ to $(iii)$ of Stage~2).
Lists $L_7$ to $L_9$ correspond to forms $(iv)$ to $(vi)$ of Stage~2.
Each entry of the lists $L_1$ to $L_9$ contains at least two fields: a catalogue number $i$ that identifies the group SmallGroup$(256, i)$, and a list of 120 indices taken from $\{1,2,\dots,256\}$ in which index $j$ labels group element~$j$ according to the GAP ordering given by Elements(SmallGroup$(256, i)$).

The list $L_{10}$ contains one entry for each of the 56,092 groups of order~$256$. If 
SmallGroup$(256,i)$ is one of the 43 groups excluded by Theorems~\ref{thm:turyn} and~\ref{thm:dillon} (see \cref{tab:2-groups}), then entry $i$ of $L_{10}$ is an emtpy list of indices.
Otherwise, this entry is a list of 120 indices corresponding to a representative difference set in SmallGroup$(256,i)$. The representative difference set is taken from list $L_1$ if possible, otherwise from $L_2$, and so on to $L_9$. 
This accounts for the origin of all but 58 of the non-empty entries of~$L_{10}$.

After reading the list HDS256 into the current directory, the following GAP code uses Peifer's accepted GAP package \emph{DifSets} \cite{difsets} to verify that HDS256 contains an index list corresponding to a difference set for 56,049 groups of the 56,092 groups of order 256, and an empty index list for the remaining 43 groups. 

\begin{verbatim}
LoadPackage("DifSets");
empty := 0;
count := 0;
for i in [1..Length(HDS256)] do;
  if HDS256[i] = [] then
    empty := empty+1;
  else
    if IsDifferenceSet(SmallGroup(256,i), HDS256[i]) then
      count := count+1;
    fi;
  fi;
od;
Print("HDS256 contains ", Length(HDS256), " index lists, of which\n");
Print(count, " correspond to a difference set and ", empty, " are empty\n");
\end{verbatim}

It took less than $20$ minutes to run this code on a 2013 iMac desktop computer using a standard implementation of GAP, producing the following output. 

\begin{verbatim}
HDS256 contains 56092 index lists, of which
56049 correspond to a difference set and 43 are empty
\end{verbatim}

Although we found it considerably more difficult to construct a difference set in some groups of order 256 than in others, there is no significant variation in verification time among groups of a given order using the IsDifferenceSet command of \emph{DifSets}.

%%%%%%%%%%%%%%%%%%%%%%%%%%%%%%%%%%%%%%%%%%%%%%%%%%
\section{Future Directions}
%%%%%%%%%%%%%%%%%%%%%%%%%%%%%%%%%%%%%%%%%%%%%%%%%%
\label{sec:future}
In this section, we propose directions for future research into Hadamard difference sets and their relations to other combinatorial objects.

We have described in this paper a streamlined procedure for demonstrating that all groups of order 64 and 256, apart from those that are excluded by the classical nonexistence results of Theorems~\ref{thm:turyn} and~\ref{thm:dillon}, belong to the class~${\cal H}$ of Hadamard difference sets. While we consider this to be a major achievement in combinatorics, it is unsatisfactory that we do not yet have a completely theoretical demonstration.

We now propose the following directions for future research into Hadamard difference sets, with three overall objectives in mind. The first objective is to simplify and unify the various techniques of \cref{sec:further}, removing the reliance on extensive computer search and the non-systematic transfer methods. The second objective is to develop recursive or direct construction techniques for nonabelian groups, that are as powerful as \cref{thm:abelian-sigset} is for constructing signature sets on abelian groups. The third and ultimate objective is to resolve \cref{quest:the-big-one}.

\begin{enumerate}[D1.]

\item
The concept of signature sets on abelian groups (\cref{thm:abelian-sigset}) and on nonabelian groups (\cref{subsec:nonabelian-sigset}) appears to be very powerful. Develop construction methods to determine all nonabelian groups on which there is a signature set relative to a normal elementary abelian subgroup.

\item
Apply \cref{lem:PTAtoSS} to create signature sets in nonabelian groups, generalizing the model of \cref{ex:C82revisit}.

\item 
Understand when and why the transfer methods of \cref{subsec:transfer} succeed.

\item
Develop a general theory based on the method of \cref{subsec:final} so that transfer methods are no longer needed for groups of order 64 and~256.

\item
Representation theory was used to help find the group ring element $D_1$ in Examples \ref{ex:modular} and~\ref{ex:yolland}. Apply representation theory to unify and extend the construction methods of \cref{sec:further}.

\item
In the study of bent functions, which are equivalent to Hadamard difference sets in elementary abelian $2$-groups, one asks how many inequivalent examples exist in a given group.
Determine how many inequivalent Hadamard difference sets in (not necessarily elementary abelian) $2$-groups can be constructed using the methods of this paper.

\item 
Formulate a theoretical framework that can be systematically applied to determine all $2$-groups belonging to~${\cal H}$.

\item 
Extend the transfer methods of \cref{subsec:transfer} to construct Hadamard difference sets in new groups whose order is not a power of 2, for example in groups of order 100 \cite{golemac-vucicic}, 144 \cite{vucicic}, or 400 \cite{mandic-vucicic}.
\end{enumerate}

We also propose some further research directions involving the relation of Hadamard difference sets to other combinatorial objects.

\begin{enumerate}[D1.]

\item[D9.]
Difference sets in the Hadamard, McFarland, Spence, and Chen-Davis-Jedwab families have parameters $(v,k,\lambda)$ satisfying $\gcd(v,k-\lambda) > 1$, and are known to share construction methods based on covering extended building sets and semi-regular relative difference sets \cite{unifying, chen}. Adapt the signature set approach for Hadamard difference sets in order to construct difference sets in nonabelian groups for the other three families, and the associated semi-regular relative difference sets in nonabelian groups for all four families.

\item[D10.]
Determine how many inequivalent designs arise from the Hadamard difference sets constructed in this paper.

\item[D11.]
Determine how many inequivalent binary codes arise from the incidence matrices of the Hadamard difference sets constructed in this paper.

\end{enumerate}

\end{document}